\newtheorem{thm}{Theorem}[section]
\newtheorem{la}[thm]{Lemma}
\newtheorem{Defn}[thm]{Definition}
\newtheorem{Remark}[thm]{Remark}
\newtheorem{Note}[thm]{Note}
\newtheorem{prop}[thm]{Proposition}
\newtheorem{cor}[thm]{Corollary}
\newtheorem{Example}[thm]{Example}
\newtheorem{Examples}[thm]{Examples}
\newtheorem{Problems}[thm]{Problems}
\newtheorem{Problem}[thm]{Problem}
\newtheorem{Convention}[thm]{Convention}
\newtheorem{Number}[thm]{\!\!}
\newenvironment{defn}{\begin{Defn}\rm}{\end{Defn}}
\newenvironment{rem}{\begin{Remark}\rm}{\end{Remark}}
\newenvironment{numba}{\begin{Number}\rm}{\end{Number}}
\newenvironment{proof}{{\noindent\bf Proof.}}%
                  {\nopagebreak\hspace*{\fill}$\Box$\medskip\medskip\par}   
\newcommand{\Punkt}{\nopagebreak\hspace*{\fill}$\Box$}
\newcommand{\wb}{\overline}
\newcommand{\ve}{\varepsilon}
\newcommand{\at}{\symbol{'100}}
\newcommand{\wt}{\widetilde}
\newcommand{\impl}{\Rightarrow}
\newcommand{\mto}{\mapsto}
\newcommand{\isom}{\cong}
\newcommand{\N}{{\mathbb N}}
\newcommand{\R}{{\mathbb R}}
\newcommand{\cG}{{\mathcal G}}
\newcommand{\C}{{\mathbb C}}
\newcommand{\cU}{{\cal U}}
\newcommand{\cO}{{\cal O}}
\newcommand{\cg}{{\mathfrak g}}
\newcommand{\dl}{{\displaystyle \lim_{\longrightarrow}}}
\newcommand{\one}{{\bf 1}}
\newcommand{\sub}{\subseteq}
\DeclareMathOperator{\im}{im}
\DeclareMathOperator{\id}{id}
\newcommand{\cB}{{\cal B}}
\newcommand{\cF}{{\cal F}}
\newcommand{\cK}{{\cal K}}
\newcommand{\cR}{{\cal R}}
\newcommand{\cL}{{\cal L}}
\newcommand{\cT}{{\cal T}}
\DeclareMathOperator{\conv}{conv}
\DeclareMathOperator{\Supp}{supp}
\DeclareMathOperator{\Mea}{M}
\DeclareMathOperator{\op}{op}
\begin{document}
$\;$\\[-27mm]
\begin{center}
{\Large\bf Continuity of convolution of test functions\\[2mm]
on Lie groups}\\[7mm]
{\bf Lidia Birth and Helge Gl\"{o}ckner}\vspace{4mm}
\end{center}
\begin{abstract}\noindent
For a Lie group $G$, we show that the map
$C^\infty_c(G)\times C^\infty_c(G)\to C^\infty_c(G)$,
$(\gamma,\eta)\mapsto \gamma*\eta$
taking a pair of
test functions to their convolution is continuous if and only if $G$ is $\sigma$-compact.
More generally, consider $r,s,t
\in \N_0\cup\{\infty\}$ with $t\leq r+s$, locally convex spaces $E_1$, $E_2$
and a continuous bilinear map $b\colon E_1\times E_2\to F$
to a complete locally convex space $F$.
Let $\beta\colon C^r_c(G,E_1)\times C^s_c(G,E_2)\to C^t_c(G,F)$,
$(\gamma,\eta)\mto \gamma *_b\eta$ be the associated convolution map.
The main result is a characterization of those $(G,r,s,t,b)$
for which $\beta$ is continuous.
Convolution
of compactly supported continuous functions on a locally compact group
is also discussed, as well as convolution of compactly supported $L^1$-functions
and convolution of compactly supported Radon measures.
\vspace{-.5mm}
\end{abstract}
{\footnotesize {\em Classification}:
22E30,
%
46F05 (Primary);
%
22D15,
%
42A85,
%
43A10,
%
43A15,
%
46A03,
%
%
46E25.\\[1mm]
%
%
{\em Key words}: Lie group, locally compact group, smooth function,
compact support, test function, second countability, countable basis,
$\sigma$-compactness, convolution, continuity, seminorm, product estimates}\\[6mm]
\noindent
{\bf Introduction and statement of results}\\[2.5mm]
It has been known since the beginnings of distribution theory
that the bilinear convolution map $\beta\colon C^\infty_c(\R^n)\times C^\infty_c(\R^n)\to C^\infty_c(\R^n)$,
$(\gamma,\eta)\mto \gamma*\eta$
(and even convolution $C^\infty(\R^n)'\times C^\infty_c(\R^n)\to C^\infty_c(\R^n)$)
is hypocontinuous~\cite[p.\,167]{Sw}.
However, a proof for continuity of $\beta$ was only published
recently \cite[Proposition 2.3]{Hir}.
The second author gave an alternative proof~\cite{Glo}, which
is based on a
continuity criterion
for bilinear mappings on locally convex direct sums.
Our goal is to adapt the latter method to the case where $\R^n$
is replaced with a Lie group, and to the convolution of vector-valued functions.\\[2.5mm]
Let $b\colon E_1\times E_2\to F$ be a continuous bilinear map
between locally convex spaces such that $b\not=0$.
Let $r,s,t\in \N_0\cup\{\infty\}$
with $t\leq r+s$.
If $r=s=t=0$, let $G$ be a locally compact group;
otherwise, let $G$ be a Lie group.
Let $\lambda_G$ be a left Haar measure on $G$.
If $G$ is discrete, we need not impose any completeness assumptions on $F$.
If $G$ is metrizable and not discrete, we assume that $F$ is sequentially complete
or satisfies the metric convex compactness property
(i.e., every metrizable compact subset of~$F$ has a relatively compact convex hull).
If $G$ is not metrizable (and hence not discrete),
we assume that $F$ satisfies the convex compactness property
(i.e., every compact subset of $F$ has a relatively compact convex hull);
this is guaranteed if $F$ is quasi-complete.\footnote{See \cite{Voi}
for a discussion of these properties.}
These conditions ensure the existence of the integrals needed to define the convolution
$\gamma*_b\eta\colon G\to F$
of $\gamma\in C^r_c(G,E_1)$ and $\eta\in C^s_c(G,E_2)$~via
\begin{equation}\label{defco}
(\gamma*_b\eta)(x):=\int_G b(\gamma(y), \eta(y^{-1}x))\, d\lambda_G(y)\qquad \mbox{for $\, x\in G$.}
\end{equation}
Then $\gamma*_b\eta\in C^{r+s}_c(G,F)$ (Proposition~\ref{diffbar}), enabling us to consider the map
\begin{equation}\label{defbeta}
\beta\colon
C^r_c(G, E_1)\times C^s_c(G, E_2)\to C^t_c(G, F)\,,\quad
(\gamma,\eta)\mapsto \gamma*_b\eta\, .
\end{equation}
The mapping $\beta$ is bilinear, and it is always
hypo\-continuous (Proposition~\ref{hypo}).
If $G$ is compact, then $\beta$
is continuous (Corollary~\ref{cpcase}).
If $G$ is an infinite discrete group, then $\beta$ is continuous if and only if $G$ is countable
and $b$ `admits product estimates' (Proposition~\ref{discrca}), in the following sense:\\[4mm]
{\bf Definition.} Let $b\colon E_1\times E_2\to F$ be a continuous bilinear map
between locally convex spaces. We say that $b$ \emph{admits product estimates}
if, for each double sequence $(p_{i,j})_{i,j\in \N}$
of continuous seminorms on~$F$,
there exists a sequence $(p_i)_{i\in \N}$ of continuous seminorms on $E_1$
and a sequence $(q_j)_{j\in\N}$ of continuous seminorms on $E_2$,
such that
\begin{equation}\label{prodest}
(\forall i,j\in \N)\,(\forall x\in E_1)\,(\forall y\in E_2)\quad
p_{i,j}(b(x,y))\leq p_i(x)q_j(y)\,.\vspace{2mm}
\end{equation}
Having dealt with compact groups and discrete groups,
only one case remains:\\[4mm]
{\bf Theorem A.} \emph{If $G$ is neither discrete nor compact,
then the convolution map $\beta$ from} (\ref{defbeta}) \emph{is
continuous if and only if all of}
(a), (b) \emph{and} (c) \emph{are satisfied}:
\begin{itemize}
\item[(a)]
\emph{$G$ is $\sigma$-compact};
\item[(b)]
\emph{If $t=\infty$, then also $r=s=\infty$};
\item[(c)]
\emph{$b$ admits product estimates.}\vspace{2mm}
\end{itemize}
We mention that (c) is automatically satisfied whenever both $E_1$ and $E_2$
are normable \cite[Corollary~4.2]{PRA}.
As a consequence, for normable $E_1$, $E_2$ and a Lie group $G$,
the convolution map $\beta\colon C^\infty_c(G,E_1)\times C^\infty_c(G,E_2)\to C^\infty_c(G,F)$
is continuous if and only if~$G$ is $\sigma$-compact.
In particular, the convolution map $C^\infty_c(G)\times C^\infty_c(G)\to C^\infty_c(G)$
is continuous for each $\sigma$-compact Lie group~$G$
(as first established in the unpublished thesis \cite{Bir},
by a different reasoning), but fails to be continuous if $G$ is not $\sigma$-compact.\\[2.3mm]
Further examples of bilinear maps admitting product estimates can be found in~\cite{PRA}.
For instance, the convolution map $C^\infty(G)\times C^\infty(G)\to C^\infty(G)$
admits product estimates whenever $G$ is a compact Lie group.
Of course, not every continuous bilinear map does admit product estimates,
e.g., the multiplication map $C^\infty[0,1]\times C^\infty[0,1]\to C^\infty[0,1]$
\cite[Example~5.2]{PRA}.
In particular, this gives us an example of a topological algebra~$A$
such that the associated convolution map
$C^\infty_c(\R,A)\times C^\infty_c(\R,A)\to C^\infty_c(\R,A)$
is discontinuous.
It is also interesting that
the convolution map $C^\infty_c(\R)\times C^0_c(\R)\to C^\infty_c(\R)$
is discontinuous (as condition~(b) from Theorem~A is violated here).
This had not been recorded yet
in the works \cite{Hir} and \cite{Glo} devoted to $G=\R^n$.\\[3mm]
Irrespective of locally compactness,
we have some information concerning
convolution on the space
$\Mea_c(G)=\dl\, \Mea_K(G)$\vspace{-.3mm}
of compactly supported complex Radon measures on a Hausdorff topological group~$G$.
Recall that a topological space~$X$ is called \emph{hemicompact}
if $X=\bigcup_{n=1}^\infty K_n$ with compact subsets $K_1\sub K_2\sub\cdots$
of $X$, such that each compact subset $K\sub X$ is contained in some $K_n$.
A locally compact space is hemicompact if and only if it is $\sigma$-compact.
We call a Hausdorff topological group~$G$ \emph{spacious}
if there exist uncountable subsets $A,B \sub G$
such that $\{(x,y)\in A\times B\colon xy\in K\}$ is finite
for each compact subset $K\sub G$.
A locally compact group is spacious if and only if it is not $\sigma$-compact (see Remark~\ref{remspa}).\\[4mm]
{\bf Theorem B.} \emph{Let $G$ be a Hausdorff group
and $\beta\colon \Mea_c(G)\times \Mea_c(G)\to \Mea_c(G)$, $(\mu,\nu)\mto \mu * \nu$
be the convolution map.}
\begin{itemize}
\item[(a)]
\emph{If $G$ is hemicompact, then $\beta$ is continuous.}
\item[(b)]
\emph{If $G$ is spacious, then $\beta$ is not continuous.}
\end{itemize}
Thus, for locally compact $G$, the convolution map $\beta$ from Theorem~B\linebreak
is continuous
if and only if $G$ is $\sigma$-compact.
An analogous conclusion\linebreak
applies to convolution of compactly supported
$L^1$-functions on a locally compact group (Corollary~\ref{L1c}).
Hemicompact groups arise in the duality theory\linebreak
of
abelian topological groups,
because dual groups of abelian metrizable groups are hemicompact,
and dual groups of abelian hemicompact groups are\linebreak
metrizable
(\cite{Au1}; see \cite{Cha},  \cite{Au2}, \cite{Au3}, \cite{GGH} for recent studies of such groups).\\[2.5mm]
We also discuss the convolution map $C^r_c(G,E_1)\times C^s(G,E_2)\to C^t(G, F)$.
It is hypocontinuous,
but continuous only if $G$ is compact (Proposition~\ref{nospp}).
As a consequence,
neither the action $C^\infty_c(G)\times E\to E$
(nor the action $C^\infty_c(G) \times E^\infty \to E^\infty$ on the space of smooth vectors)
associated to a continuous action $G\times E\to E$
of a Lie group $G$ on a Fr\'{e}chet space~$E$
need to be continuous (contrary to a claim recently made \cite[pp.\,667--668]{Kro}).
In fact, if $G$ is $\R$ and $\R\times C^\infty(\R)\to C^\infty(\R)$
the translation action, then $C^\infty_c(\R)$ acts on $C^\infty(\R)$
by the convolution map, which is discontinuous by Proposition~\ref{nospp}
(or the independent study \cite{Lar}).
For details, we refer the reader to \cite[Proposition~A]{GLF}.\\[2.5mm]
The $(G,r,s,t,b)$ for which $\beta$ admits product
estimates are also known~\cite{PRA}.\\[2.5mm]
For recent studies of convolution of vector-valued distributions,
we refer to \cite{Ba1}, \cite{Ba2} and the references therein.
Larcher~\cite{Lar} gives a systematic account of the
continuity properties of convolution between classical spaces of
scalar-valued functions and distributions on $\R^n$,
and proves discontinuity in some cases
in which convolution was previously considered continuous by some authors
(like \cite{Ehr} and \cite{Shi}).\\[3mm]
{\bf Structure of the article.}
Sections~\ref{secprel} to~\ref{secmeasset} are of a preparatory nature
and provide basic notation and facts
which are similar to familiar special cases
and easy to take on faith.
Because no direct references are available
in the required generality, we do not omit the proofs
(which follow classical ideas),
but relegate them to an appendix (Appendix~\ref{appproofs}).
Appendices~A and~B compile further preliminaries concerning
vector-valued integrals and hypocontinuous bilinear maps.
On this footing, our results
are established in Sections~\ref{aness} to~\ref{secfin}.\\[2.6mm]
\emph{Acknowledgement.} The continuity question concerning convolution on $C^\infty_c(G)$
was posed to the second author by Karl-Hermann Neeb (Erlangen) in July 2010
(and arose in a research project by the latter and Gestur Olafsson,
cf.\ \cite[Proposition~2.8]{NaO}).
The research was supported by DFG, grant GL 357/5--2.
\section{Preliminaries and notation}\label{secprel}
In this section, we compile notation and basic facts concerning
spaces of vector-valued $C^r$-functions.
The proofs are given in Appendix~\ref{appproofs}.\\[2.3mm]
{\bf Basic conventions.} We write $\N=\{1,2,\ldots\}$ and $\N_0:=\N\cup\{0\}$.
By a \emph{locally convex space}, we mean a Hausdorff locally convex real
topological vector space. If $E$ is such a space, we write $E'$ for the space of continuous\linebreak
linear functionals on~$E$.
A map between topological spaces
is called a \emph{topological embedding} if it is a homeomorphism
onto its image. If $E$ is vector space
and $p$ a seminorm on~$E$, we define
$B^p_r(x):=\{y\in E\colon p(y-x)<r\}$
and $\wb{B}^p_r(x):=\{y\in E\colon p(y-x)\leq r\}$
for $x\in E$ and $r>0$.
If $X$ is a set and $\gamma\colon X\to E$
a map,
we let $\|\gamma\|_{p,\infty}:=\sup_{x\in X}p(\gamma(x))$.
If $(E,\|.\|)$ is a normed space and $p=\|.\|$,
we write $\|\gamma\|_\infty$ instead of $\|\gamma\|_{p,\infty}$,
and $B^E_r(x)$ instead of $B^p_r(x)$.
Apart from $\rho\,d\mu$, we shall also write
$\rho\odot\mu$ for measures with a\linebreak
density.
The manifolds considered in this article are finite-dimensional,
but not necessarily $\sigma$-compact or paracompact (unless the contrary is stated).
The Lie groups considered are finite-dimensional, real Lie groups.\\[2.7mm]
{\bf Vector-valued {\boldmath$C^r$}-functions.}
Let $E$ and $F$ be locally convex spaces, $U\sub E$ an open set
and $r\in \N_0\cup\{\infty\}$.
Then a map $\gamma \colon U\to F$ is called $C^r$ if it is continuous,
the iterated directional derivatives $d^{(j)}\gamma(x,y_1,\ldots, y_j):=(D_{y_j}\cdots D_{y_1}\gamma)(x)$
exist for all $j\in \N$ such that $j\leq r$, $x\in U$ and $y_1,\ldots, y_j\in E$,
and, moreover, each of the maps $d^{(j)}\gamma \colon U\times E^j\to F$ is continuous.
See \cite{Mic}, \cite{Mil}, \cite{Ham},
\cite{RES}, \cite{GaN}
for the theory of such functions (in varying degrees of generality
as regards $E$ and $F$).
If $E=\R^n$, then a vector-valued function~$\gamma$ as before is $C^r$ if and only if the partial derivatives
$\partial^\alpha \gamma\colon U\to F$ exist and are continuous,
for all multi-indices $\alpha=(\alpha_1,\ldots,\alpha_n)\in \N_0^n$ such that $|\alpha|:=\alpha_1+\cdots+\alpha_n\leq r$
(see, e.g., \cite{GaN}).
Since compositions of $C^r$-maps are $C^r$,
it makes sense to consider $C^r$-maps from
$C^r$-manifolds to locally convex spaces.
If $M$ is a $C^1$-manifold and
$\gamma\colon M\to E$ a $C^1$-map to a locally convex space,
we write $d\gamma$ for the second component
of the tangent map $T\gamma\colon TM\to TE\isom E\times E$.
If $X$ is a vector field on~$M$, we define
\begin{equation}\label{defnDX}
D_X(\gamma):=X.\gamma:=d\gamma\circ X\,.
\end{equation}
{\bf Function spaces and their topologies.}
Let $r\in \N_0\cup\{\infty\}$ now and $E$ be a locally convex space.
If $r=0$, let
$M$ be a (Hausdorff) locally compact space,
and equip the space $C^0(M,E):=C(M,E)$ of continuous $E$-valued functions on~$M$
with the compact-open topology
given by the seminorms
\[
\|.\|_{p,K}\colon C(M,E)\to [0,\infty[\,,\quad \gamma\mto \|\gamma|_K\|_{p,\infty},
\]
for $K$ ranging through the compact subsets of~$M$,
and $p$ through the continuous seminorms on~$E$.
If $(E,\|.\|_E)$ is a normed space, we abbreviate $\|.\|_K:=\|.\|_{\|.\|_E,K}$.
To harmonize notation, write
$T^0M:=M$ and $d^0\gamma:=\gamma$ for $\gamma\in C^0(M,E):=C(M,E)$.
If $r>0$, let $M$ be a $C^r$-manifold.
For $k\in\N$ with $k\leq r$, set $T^kM:=T(T^{k-1}M)$ and $d^k\gamma:=d(d^{k-1}\gamma)\colon T^kM\to E$
for $C^k$-maps $\gamma\colon M\to E$. Thus $T^1M=TM$ and $d^1\gamma=d\gamma$.
Equip $C^r(M,E)$ with the initial topology with respect to the maps
$d^k\colon C^r(M,E)\to C(T^kM, E)$ for $k\in \N_0$ with $k\leq r$,
where $C(T^k(M),E)$ is equipped with the compact-open topology.
Returning to $r\in \N_0\cup\{\infty\}$,
endow $C^r_A(M,E):=\{\gamma\in C^r(M,E)\colon \text{supp}(\gamma)\sub A\}$
with the topology induced by $C^r(M,E)$,
for each closed subset $A\sub M$.
Let $\cK(M)$ be the set of compact subsets of~$M$.
Give $C^r_c(M,E):=\bigcup_{K\in \cK(M)}\, C^r_K(M,E)$
the locally convex direct limit topology.
Since each inclusion map
$C^r_K(M,E)\to C^r(M,E)$ is continuous and linear,
also the linear inclusion map $C^r_c(M,E)\to C^r(M,E)$ is continuous.
Since $C^r(M,E)$ is Hausdorff, this implies that also $C^r_c(M,E)$ is Hausdorff.
We\linebreak
abbreviate $C^r(M):=C^r(M,\R)$,
$C^r_K(M):=C^r_K(\R)$ and $C^r_c(M):=C^r_c(M,\R)$.\\[2.7mm]
{\bf Facts concerning direct sums.}
If $(E_i)_{i\in I}$ is a family of locally
convex spaces,
we shall always equip the direct sum $E:=\bigoplus_{i\in I}E_i$
with the locally convex direct sum topology~\cite{Bou}.
We often identify $E_i$ with its
image in~$E$.
\begin{rem}\label{semisum}
If $U_i\sub E_i$ is a $0$-neighbourhood for $i\in I$,
then the convex hull
$U:=\conv\big(\bigcup_{i\in I} U_i\big)$
is a $0$-neighbourhood in~$E$,
and a basis of $0$-neighbourhoods is obtained in this way
(as is well-known).
If $I$ is countable, then the corresponding `boxes'
$\bigoplus_{i\in I}U_i:=E\cap \prod_{i\in I}U_i$ form
a basis of $0$-neighbourhoods in~$E$ (cf.\ \cite{Jar}).
It is clear from this that the topology on~$E$ is defined
by the seminorms $q\colon E\to [0,\infty[$ taking $x=(x_i)_{i\in I}$
to $\sum_{i\in I}q_i(x_i)$, for $q_i$ ranging through the set of continuous
seminorms on~$E_i$
(because $B^q_1(0)=\conv(\bigcup_{i\in I}B^{q_i}_1(0))$.)
If $I$ is countable, we can take the seminorms $q(x):=\max\{q_i(x_i)\colon i\in I\}$
instead (because $B^q_1(0)=\bigoplus_{i\in I}B^{q_i}_1(0)$.)
\end{rem}
\begin{la}\label{sumemb}
Let $(E_i)_{i\in I}$ and $(F_i)_{i\in I}$ be families of locally
convex spaces and $\lambda_i\colon E_i\to F_i$ be
a linear map that is topological embedding,
for $i\in I$.
Then $\oplus_{i\in I}\lambda_i\colon \bigoplus_{i\in I}E_i\to\bigoplus_{i\in I}F_i$,
$(x_i)_{i\in I} \mto (\lambda_i(x_i))_{i\in I}$
is a topological embedding.
\end{la}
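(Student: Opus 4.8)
The plan is to verify that $\lambda:=\oplus_{i\in I}\lambda_i$ is a homeomorphism onto its image in three steps: injectivity, continuity, and the identification of the direct sum topology on $\bigoplus_{i\in I}E_i$ with the initial topology induced by~$\lambda$. Injectivity is immediate, since each $\lambda_i$ is injective (being a topological embedding). Continuity follows from the universal property of the locally convex direct sum: the restriction of $\lambda$ to each summand $E_i$ is $\lambda_i$ followed by the continuous inclusion $F_i\to\bigoplus_{j\in I}F_j$, hence is continuous, whence $\lambda$ is continuous.

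The crux is to show that the direct sum topology on $E:=\bigoplus_{i\in I}E_i$ coincides with the initial topology with respect to~$\lambda$. By Remark~\ref{semisum}, the direct sum topology is defined by the seminorms $q(x)=\sum_{i\in I}q_i(x_i)$, where each $q_i$ ranges through the continuous seminorms on~$E_i$. I would dominate each such $q$ by the $\lambda$-pullback of a continuous seminorm on $\bigoplus_{i\in I}F_i$. The key input is the embedding property of $\lambda_i$ in the following form: for every continuous seminorm $q_i$ on~$E_i$ there is a continuous seminorm $p_i$ on~$F_i$ with $q_i\leq p_i\circ\lambda_i$. This holds because $B^{q_i}_1(0)$ is a $0$-neighbourhood in~$E_i$, which carries the subspace topology via $\lambda_i$; choosing a continuous seminorm $p_i$ on~$F_i$ with $\lambda_i^{-1}(B^{p_i}_1(0))\subseteq B^{q_i}_1(0)$ and rescaling yields $q_i(x)\leq p_i(\lambda_i(x))$ for all $x\in E_i$.

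Setting $p(y):=\sum_{i\in I}p_i(y_i)$, which is a continuous seminorm on $\bigoplus_{i\in I}F_i$ by Remark~\ref{semisum}, I then obtain
\[
q(x)=\sum_{i\in I}q_i(x_i)\leq\sum_{i\in I}p_i(\lambda_i(x_i))=p(\lambda(x))\qquad\text{for all } x\in E.
\]
Combining the steps: continuity of $\lambda$ shows that the initial topology with respect to~$\lambda$ is coarser than the direct sum topology, while the estimate $q\leq p\circ\lambda$ shows that every defining seminorm $q$ of the direct sum topology is continuous for the initial topology, so that the two topologies coincide. As $\lambda$ is moreover injective, it is a topological embedding.

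The only real obstacle is the seminorm domination $q_i\leq p_i\circ\lambda_i$: it is a standard Minkowski-functional argument, but it must be carried out with care in the degenerate case $p_i(\lambda_i(x))=0$, where one uses the scaling $tx$ with $t\to\infty$ to force $q_i(x)=0$.
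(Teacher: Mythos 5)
Your proof is correct and follows essentially the same route as the paper's: continuity from the universal property of the locally convex direct sum, injectivity componentwise, and the embedding property by combining the sum-seminorm description of the direct sum topology (Remark~\ref{semisum}) with the embedding property of each $\lambda_i$. The only difference is one of bookkeeping: you upgrade the ball inclusion $\lambda_i^{-1}(B^{p_i}_1(0))\subseteq B^{q_i}_1(0)$ to the pointwise estimate $q_i\leq p_i\circ\lambda_i$ by scaling (handling the degenerate case correctly) and then sum these estimates to get $q\leq p\circ\lambda$, whereas the paper stays at the level of unit balls and verifies, in your notation, $\lambda(E)\cap B^{p}_1(0)\subseteq\lambda(B^{q}_1(0))$ via a convex-hull computation; both are valid ways to finish.
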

{\bf Mappings to direct sums.}
\begin{la}\label{lcsum}
Let $r\in \N_0\cup\{\infty\}$.
If $r=0$, let $M$ be a locally compact space.
If $r>0$, let
$M$ be a $C^r$-manifold.
Let~$E$ be a locally convex space,
and $(h_j)_{j\in J}$ be a family of
functions $h_j\in C^r_c(M)$
whose supports $K_j:=\Supp(h_j)$
form a locally finite family.
Then the map
\[
\Phi\colon C^r_c(M,E)\to\bigoplus_{j\in J}C^r_{K_j}(M,E)
\,,\quad \gamma\mapsto (h_j\cdot \gamma)_{j\in J}
\]
is continuous and linear. If $(h_j)_{j\in J}$ is a partition of unity $($i.e., $h_j\geq 0$ and
$\sum_{j\in J}h_j=1$ pointwise$)$, then~$\Phi$ is a topological embedding.
\end{la}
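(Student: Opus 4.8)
The plan is to dispatch linearity and well-definedness immediately, then derive continuity from the direct-limit structure of $C^r_c(M,E)$, and finally obtain the embedding property from the partition-of-unity hypothesis by exhibiting an explicit continuous left inverse. Linearity of $\Phi$ is clear, since $\gamma\mto h_j\cdot\gamma$ is linear for each~$j$. For well-definedness, I would first note that $h_j\cdot\gamma$ is $C^r$ (a product of a $C^r$ scalar function and a $C^r$ vector-valued function) with $\Supp(h_j\cdot\gamma)\sub\Supp(h_j)=K_j$, so $h_j\cdot\gamma\in C^r_{K_j}(M,E)$. To see that $(h_j\cdot\gamma)_{j\in J}$ really lies in the direct sum, fix $\gamma$ with compact support $K:=\Supp(\gamma)$. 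By local finiteness of $(K_j)_{j\in J}$, every point of~$M$ has a neighbourhood meeting only finitely many~$K_j$; covering the compact set~$K$ by finitely many such neighbourhoods shows that $K$ meets only finitely many~$K_j$. For all other~$j$ we have $\Supp(h_j\cdot\gamma)\sub K_j\cap K=\emptyset$, so $h_j\cdot\gamma=0$, and only finitely many components are nonzero.

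For continuity, I would exploit that $C^r_c(M,E)=\dl\, C^r_K(M,E)$ carries the locally convex direct limit topology, so that a linear map~$\Phi$ is continuous if and only if its restriction $\Phi|_{C^r_K(M,E)}$ is continuous for every $K\in\cK(M)$. Fixing such a~$K$ and setting $J_K:=\{j\in J\colon K_j\cap K\not=\emptyset\}$, which is finite by the argument above, I observe that for $\gamma\in C^r_K(M,E)$ and $j\notin J_K$ one has $h_j\cdot\gamma=0$. Hence $\Phi|_{C^r_K(M,E)}$ takes values in the finite partial sum $\bigoplus_{j\in J_K}C^r_{K_j}(M,E)$, whose direct sum topology coincides with the product topology since $J_K$ is finite. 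A map into a finite product is continuous precisely when all its components are, and the $j$-th component is the multiplication operator $C^r_K(M,E)\to C^r_{K_j}(M,E)$, $\gamma\mto h_j\cdot\gamma$, by a fixed $h_j\in C^r_c(M)$, which is continuous and linear. Composing with the canonical continuous inclusion $\bigoplus_{j\in J_K}C^r_{K_j}(M,E)\emb\bigoplus_{j\in J}C^r_{K_j}(M,E)$ yields continuity of $\Phi|_{C^r_K(M,E)}$, and thus of~$\Phi$.

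For the embedding statement, assume $(h_j)_{j\in J}$ is a partition of unity and define the summation map $\Sigma\colon\bigoplus_{j\in J}C^r_{K_j}(M,E)\to C^r_c(M,E)$, $(\eta_j)_{j\in J}\mto\sum_{j\in J}\eta_j$. Only finitely many $\eta_j$ are nonzero, so the sum is finite, $C^r$, and compactly supported, whence $\Sigma$ is well-defined and linear. As a linear map out of a locally convex direct sum, $\Sigma$ is continuous if and only if its restriction to each summand $C^r_{K_j}(M,E)$ is continuous; but that restriction is just the canonical inclusion $C^r_{K_j}(M,E)\emb C^r_c(M,E)$, continuous by the very definition of the direct limit. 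Finally $\Sigma(\Phi(\gamma))=\sum_{j\in J}h_j\cdot\gamma=\big(\sum_{j\in J}h_j\big)\gamma=\gamma$, so $\Sigma\circ\Phi=\id$. Consequently $\Phi$ is injective, and $\Sigma$ restricts to a continuous inverse of~$\Phi$ on its image, making $\Phi$ a topological embedding.

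I expect the main obstacle to be the continuity of the multiplication operator $\gamma\mto h_j\cdot\gamma$ on $C^r$-functions, which requires a Leibniz-type estimate bounding the defining seminorms of $h_j\cdot\gamma$ (obtained from the $d^k$) in terms of those of~$\gamma$; this is routine but is the genuinely analytic input. The one conceptual subtlety is that continuity into a direct sum is \emph{not} merely componentwise in general: the decisive point making the argument work is that on each $C^r_K(M,E)$ only the finitely many components indexed by $J_K$ survive, so the restricted map factors through a finite partial sum, where componentwise continuity does suffice.
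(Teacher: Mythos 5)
Your proposal is correct and follows essentially the same route as the paper's own proof: restrict to each $C^r_K(M,E)$, use that only the finitely many indices in $J_K$ contribute so the restriction factors through a finite partial sum (where componentwise continuity via the multiplication operators suffices), and obtain the embedding from the continuous summation map as a left inverse with $\Sigma\circ\Phi=\id$. The paper likewise defers the continuity of the multiplication operators $\gamma\mapsto h_j\cdot\gamma$ to a cited result, so your treatment of that step matches the intended level of detail.
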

\begin{la}\label{SJ}
Let $r\in \N_0\cup\{\infty\}$.
If $r=0$, let $M$ be a locally compact space.
If $r>0$, let
$M$ be a $C^r$-manifold.
Let~$E$ be a locally convex space,
and~$P$ be a set of disjoint, open and closed subsets
of~$M$, such that $(S)_{S\in P}$ is locally finite.
Then\vspace{-3mm}
\[
\Phi\colon C^r_c(M,E)\to\bigoplus_{S\in P}C^r_c(S,E)
\,,\quad \gamma\mapsto (\gamma|_S)_{S\in P}\vspace{-2mm}
\]
is a continuous linear map.
If~$P$ is a partition of~$M$ into open sets,
then~$\Phi$ is an isomorphism of topological vector spaces.
\end{la}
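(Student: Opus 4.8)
The plan is to reduce everything to the universal properties of the two locally convex limit constructions in play: $C^r_c(M,E)=\varinjlim_{K\in\cK(M)}C^r_K(M,E)$ is a locally convex direct limit, so a linear map out of it is continuous as soon as its restriction to each $C^r_K(M,E)$ is continuous; and $\bigoplus_{S\in P}C^r_c(S,E)$ is a locally convex direct sum, so a linear map out of it is continuous as soon as its restriction to each summand $C^r_c(S,E)$ is continuous. As for well-definedness and linearity of $\Phi$: for $\gamma\in C^r_c(M,E)$ and $S\in P$, the set $S$ is open, so $\gamma|_S$ is $C^r$; and $S$ is closed, so $\Supp(\gamma|_S)\sub\Supp(\gamma)\cap S$ is a closed subset of the compact set $\Supp(\gamma)$, hence compact, whence $\gamma|_S\in C^r_c(S,E)$. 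Since $\Supp(\gamma)$ is compact and $(S)_{S\in P}$ is locally finite, only finitely many $S$ meet $\Supp(\gamma)$, so $\gamma|_S=0$ for all but finitely many $S$ and $(\gamma|_S)_{S\in P}$ genuinely lies in the direct sum; linearity is clear.

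\emph{Continuity of $\Phi$.} Fix $K\in\cK(M)$; it suffices to show $\Phi|_{C^r_K(M,E)}$ is continuous. By local finiteness, $P_K:=\{S\in P\colon S\cap K\neq\emptyset\}$ is finite, and for $\gamma\in C^r_K(M,E)$ one has $\gamma|_S=0$ whenever $S\notin P_K$. Hence $\Phi$ maps $C^r_K(M,E)$ into the finite subsum $\bigoplus_{S\in P_K}C^r_c(S,E)$, a topological subspace of the full sum on which the sum and product topologies agree, so it is enough that each component $\gamma\mapsto\gamma|_S$ be continuous. This holds because restriction to the open set $S$ is continuous from $C^r(M,E)$ to $C^r(S,E)$: indeed $T^kS$ is open in $T^kM$, $d^k(\gamma|_S)=(d^k\gamma)|_{T^kS}$, and every compact subset of $T^kS$ is compact in $T^kM$, so the relevant compact-open seminorms pull back through the initial topologies. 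Corestricting to the image in $C^r_{K\cap S}(S,E)$ and composing with $C^r_{K\cap S}(S,E)\emb C^r_c(S,E)$ gives the claim.

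\emph{The partition case.} If $P$ partitions $M$ into open sets, then each $S\in P$ is also closed, and the family $(S)_{S\in P}$ is automatically locally finite (the unique member containing a point $x$ is a neighbourhood of $x$ meeting no other member), so the previous parts show $\Phi$ is continuous and linear. It is bijective by gluing: a finitely supported family $(\gamma_S)_{S\in P}$ determines $\gamma\colon M\to E$ with $\gamma|_S=\gamma_S$, which is $C^r$ because the $S$ are open and being $C^r$ is local, and which has compact support, being a finite union of the compact sets $\Supp(\gamma_S)$; injectivity is obvious. For continuity of $\Phi^{-1}$ I invoke the direct sum universal property: it suffices that each extension-by-zero map $e_S\colon C^r_c(S,E)\to C^r_c(M,E)$ be continuous, and by the direct limit property of $C^r_c(S,E)$ it even suffices to treat $e_S|_{C^r_L(S,E)}$ for each $L\in\cK(S)$ separately.

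The routine parts above rest only on the openness and closedness of the members of $P$ and on the elementary behaviour of the compact-open seminorms under restriction to an open subset. I expect the main obstacle to be exactly the continuity of each $e_S$, because the seminorms defining $C^r_c(M,E)$ live on the iterated tangent bundles $T^kM$, whose fibres are non-compact. The key will be the identity $\|d^k e_S(\eta)\|_{p,C}=\|d^k\eta\|_{p,C''}$ for every compact $C\sub T^kM$ and continuous seminorm $p$ on $E$, where $C'':=C\cap(\pi^k)^{-1}(L)$ for the iterated projection $\pi^k\colon T^kM\to M$. One argues that $d^k\eta$ vanishes over $S\setminus L$ and that $(\pi^k)^{-1}(L)\sub T^kS$ is closed in $T^kM$, so that $C''$ is a compact subset of $T^kS$; this makes extension by zero continuous (not merely well-defined) and shows $e_S|_{C^r_L(S,E)}\colon C^r_L(S,E)\to C^r_L(M,E)\emb C^r_c(M,E)$ is continuous, completing the proof.
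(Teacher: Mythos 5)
Your proof is correct and takes essentially the same route as the paper's: the same reduction via the universal properties of the locally convex direct limit $C^r_c(M,E)$ and of the direct sum, the same finite-subsum argument from local finiteness, and the same gluing inverse in the partition case. The only difference is that where the paper cites \cite[Lemma~3.7]{GCX} for continuity of the restriction maps and \cite[Lemma~4.24]{ZOO} for continuity of extension by zero, you verify both ingredients directly via seminorm identities on the iterated tangent bundles (e.g.\ $\|d^k e_S(\eta)\|_{p,C}=\|d^k\eta\|_{p,\,C\cap(\pi^k)^{-1}(L)}$), and these verifications are sound.
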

{\bf Seminorms arising from frames.}
If $M$ is a smooth manifold of dimension $m$,
we call a set $\cF=\{X_1,\ldots, X_m\}$ of smooth vector fields
a \emph{frame} on~$M$ if $X_1(p),\ldots, X_m(p)$ is a basis
for $T_p(M)$, for each $p\in M$.
If also $\cG=\{Y_1,\ldots, Y_m\}$ is a frame
on $M$, then there exist $a_{i,j}\in C^\infty(M)$ for $i,j\in\{1,\ldots, m\}$
such that $Y_j=\sum_{i=1}^m a_{i,j} X_i$.
\begin{la}\label{framediff}
Let $M$ be a smooth manifold,
$E$ be a locally convex space, $k, \ell \in \N_0$
and $\cF_1,\ldots, \cF_k$ be frames on~$M$.
Let $\gamma \colon M\to E$ be a $C^k$-map
such that $X_j\cdots X_1.\gamma\in C^\ell(M,E)$
for all $j\in \N_0$ with $j\leq k$ and $X_i\in \cF_i$ for $i\in \{1,\ldots,j\}$.
Then $\gamma$ is $C^{k+\ell}$.
\end{la}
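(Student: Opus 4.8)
The plan is to reduce to a Euclidean chart and then express the ordinary coordinate partial derivatives of $\gamma$ through exactly those iterated frame derivatives that the hypothesis controls. Being $C^{k+\ell}$ is a local property, and every frame restricts to a frame on an open subset, so I would first pass to the case $M=U$ for an open $U\subseteq\R^m$; there the coordinate vector fields $\partial_1,\dots,\partial_m$ form a frame as well. The case $\ell=0$ is vacuous, since its conclusion merely repeats the hypothesis that $\gamma$ is $C^k$, so I may assume $\ell\geq 1$. Then every derivative $X_j\cdots X_1.\gamma$ occurring in the hypothesis lies in $C^\ell\subseteq C^1$ and is in particular once differentiable, which is what legitimizes the computation below.

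The core step is a bookkeeping computation. Writing $\cF_p=\{X^{(p)}_1,\dots,X^{(p)}_m\}$, I expand each coordinate field in the $p$-th frame as $\partial_i=\sum_j b^{(p)}_{j,i}\,X^{(p)}_j$ with $b^{(p)}_{j,i}\in C^\infty(U)$, so that $\partial_i.\mu=\sum_j b^{(p)}_{j,i}\,(X^{(p)}_j.\mu)$ for any $C^1$-map $\mu$, by linearity of $d\mu$ on tangent spaces. I would use the Leibniz rule for $E$-valued maps, valid because scalar multiplication $\R\times E\to E$ is continuous bilinear, hence smooth, so that the product of a scalar $C^r$-function with an $E$-valued $C^r$-function is again $C^r$. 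The claim, proved by induction on the order $j\le k$, is that each partial $\partial_{i_j}\cdots\partial_{i_1}\gamma$ is a finite $C^\infty(U)$-linear combination of \emph{admissible} frame derivatives $X^{(p)}_{\ast}\cdots X^{(1)}_{\ast}.\gamma$ with $p\le j$, where one field is taken from each of $\cF_1,\dots,\cF_p$ in this order. Indeed, differentiating such an admissible derivative by $\partial_{i_{j+1}}=\sum_m b^{(p+1)}_{m,i_{j+1}}X^{(p+1)}_m$ yields, via Leibniz, admissible derivatives of order $p+1$ (when $X^{(p+1)}_m$ hits the function, so the ordered-frame pattern $\cF_1,\dots,\cF_{p+1}$ is preserved) together with admissible derivatives of order $p$ carrying new smooth coefficients (when the derivative hits a $b$-coefficient). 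As $p+1\le j+1\le k$, all derivatives produced are among those the hypothesis places in $C^\ell$; hence every order-$\le k$ partial of $\gamma$ is a smooth-linear combination of $C^\ell$-maps and is therefore itself $C^\ell$.

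To finish, I would upgrade this to $C^{k+\ell}$: since each order-$\le k$ partial of $\gamma$ is $C^\ell$, its own partials up to order $\ell$ exist and are continuous, so splitting any multi-index $\delta$ with $|\delta|\le k+\ell$ into one part of order $\le k$ and one of order $\le\ell$ shows that $\partial^\delta\gamma$ exists and is continuous for all $|\delta|\le k+\ell$. By the characterization of $C^{k+\ell}$-maps on open subsets of $\R^m$ through continuity of partial derivatives up to order $k+\ell$ (recalled above), $\gamma$ is $C^{k+\ell}$; transporting back through the chart completes the argument. The main obstacle is precisely the inductive bookkeeping: one must verify that the Leibniz expansion never produces a derivative violating the ordered pattern $\cF_1,\dots,\cF_p$ required by the hypothesis, the only genuinely higher-order contributions being those in which the new differentiation lands on $\gamma$ rather than on a smooth coefficient.
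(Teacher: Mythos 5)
Your proof is correct, and it reaches the conclusion by a genuinely different induction than the paper's. The paper also localizes to a chart and also rests on expanding the coordinate fields in the frames with smooth coefficients, but it then inducts on $k$ itself, exploiting that the hypothesis is hereditary: for $X\in\cF_1$, the map $X.\gamma$ satisfies the hypotheses of the lemma with the frames $\cF_2,\ldots,\cF_k$ and the same $\ell$, hence is $C^{k+\ell-1}$ by the inductive assumption; writing $\frac{\partial}{\partial x_j}=\sum_{i}a_{i,j}X_i$ with $X_i\in\cF_1$ and $a_{i,j}\in C^\infty$ then shows each first-order partial $\frac{\partial\gamma}{\partial x_j}=\sum_i a_{i,j}(X_i.\gamma)$ is $C^{k+\ell-1}$, and since $\gamma$ is $C^1$ with $C^{k+\ell-1}$ first-order partials it is $C^{k+\ell}$. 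That organization only ever rewrites first-order derivatives, so no Leibniz bookkeeping is needed and the "admissibility" of higher-order terms never has to be tracked. You instead unroll this into a single induction on the order $j\leq k$ of iterated coordinate partials, verifying that the Leibniz expansion always produces admissible frame derivatives (the order advances exactly when the new field hits the derivative of $\gamma$, and the needed frame $\cF_{p+1}$ exists because $p+1\leq j+1\leq k$), and you then finish by splitting a multi-index of order $\leq k+\ell$ into a $\leq k$ part and a $\leq\ell$ part. What your route buys is a completely explicit identity expressing every coordinate partial of $\gamma$ through the hypothesized frame derivatives, at the price of the combinatorial verification; what the paper's route buys is brevity, since the induction hypothesis absorbs all of that bookkeeping. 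Both arguments invoke the same two background facts: that multiplication by smooth scalar functions preserves $C^r$ for vector-valued maps, and the characterization of $C^r$ on open subsets of $\R^m$ via existence and continuity of partial derivatives (you use it in its full form, the paper in the incremental form that $C^1$ with $C^{r-1}$ partials implies $C^r$).
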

\begin{la}\label{topframe}
Let $E$ be a locally convex space,
$M$ be a smooth manifold,
$r \in \N$ and $\cF:=(\cF_1,\ldots, \cF_r)$
be an $r$-tuple of frames on~$M$.
Then the usual topology $\cO$ on $C^r(M,E)$
coincides with the initial topology $\cT_\cF$ with respect to the maps
\[
D_{X_j,\ldots, X_1}\colon C^r(M,E)\to C^0(M,E)_{c.o.}, \quad \gamma \mto X_j\ldots X_1.\gamma \, ,
\]
where $j\in \{0,\ldots, r\}$ and $X_i\in \cF_i$ for $i\in \{1,\ldots, j\}$.
As a consequence, for each closed subset $K\sub M$,
the topology on $C^r_K(M,E)$ is initial with respect to the maps
$C^r_K(M,E)\to C^0_K(M,E)_{c.o.}$,
$\gamma \mto X_j\ldots X_1.\gamma$,
where $j\in \{0,\ldots, r\}$ and $X_i\in \cF_i$ for $i\in \{1,\ldots, j\}$.
\end{la}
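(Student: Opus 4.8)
The plan is to exploit that both $\cO$ and $\cT_\cF$ are \emph{initial} topologies, so it suffices to prove each is finer than the other; equivalently, that the maps defining each topology are continuous for the other. Recall that $\cO$ is initial with respect to the maps $d^k\colon C^r(M,E)\to C(T^kM,E)_{c.o.}$ ($0\leq k\leq r$), while $\cT_\cF$ is initial with respect to the frame-derivative maps $D_{X_j,\ldots,X_1}\colon \gamma\mto X_j\ldots X_1.\gamma$ ($0\leq j\leq r$, $X_i\in\cF_i$). Thus the proof splits into the two inclusions $\cT_\cF\sub\cO$ and $\cO\sub\cT_\cF$.

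For $\cT_\cF\sub\cO$ I would argue directly. Iterating the chain rule, the field $X_j\ldots X_1.\gamma$ can be written as a composition $d^j\gamma\circ\sigma$, where $\sigma\colon M\to T^jM$ is a smooth map built from $X_1,\ldots,X_j$ by iterated tangent maps and is independent of $\gamma$. Hence, for a compact $L\sub M$ and a continuous seminorm $p$ on $E$,
\[
\|X_j\ldots X_1.\gamma\|_{p,L}=\|d^j\gamma\circ\sigma\|_{p,L}=\|d^j\gamma\|_{p,\sigma(L)},
\]
and $\sigma(L)\sub T^jM$ is compact. So each defining seminorm of $\cT_\cF$ is dominated by an $\cO$-seminorm, giving $\cT_\cF\sub\cO$.

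The inclusion $\cO\sub\cT_\cF$ is the main point, proved locally and then globalized. On a chart domain $U$ with coordinates, the coordinate fields $\partial_1,\ldots,\partial_m$ are smooth combinations $\partial_i=\sum_l a^{(s)}_{i,l}X^{(s)}_l$ of the fields of any frame $\cF_s=\{X^{(s)}_1,\ldots,X^{(s)}_m\}$, the coefficients $a^{(s)}_{i,l}\in C^\infty(U)$ being the entries of the smoothly varying, invertible change-of-basis matrix. Differentiating a coordinate partial and substituting level by level (using $\cF_1$ for the innermost derivative, $\cF_2$ for the next, and so on), one shows by induction on $|\alpha|$ that, for each $|\alpha|\leq k\leq r$,
\[
\partial^\alpha(\gamma|_U)=\sum_{(l_1,\ldots,l_j),\,j\leq|\alpha|} c_{\alpha,(l_1,\ldots,l_j)}\cdot\big(X^{(j)}_{l_j}\ldots X^{(1)}_{l_1}.\gamma\big)\big|_U,
\]
a finite sum whose coefficients $c_{\alpha,\sbull}\in C^\infty(U)$ arise from the $a^{(s)}_{i,l}$ and their derivatives; crucially, every chain occurring respects the prescribed frame order, so each summand is a restriction of one of the maps defining $\cT_\cF$. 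On a compact set the coefficients are bounded, yielding a local estimate of the coordinate expression of $d^k\gamma$ by finitely many $\cT_\cF$-seminorms. To globalize, given a compact $L\sub T^kM$, cover it by finitely many compact sets $L_1,\ldots,L_N$ with each $L_s\sub T^kU_s$ for a chart domain $U_s$; bounding the chart-coordinate norm of $d^k\gamma$ over $L_s$ by the $p$-compact-open norms of the partials $\partial^\alpha(\gamma|_{U_s})$ ($|\alpha|\le k$) over the compact set $\pi(L_s)\sub U_s$, and inserting the local estimate, one bounds $\|d^k\gamma\|_{p,L}$ by a finite maximum of $\cT_\cF$-seminorms. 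This shows each $d^k$ is $\cT_\cF$-continuous, so $\cO\sub\cT_\cF$. \emph{The hard part} is exactly this inclusion: controlling the bookkeeping of the induction (each differentiation of a substituted expression produces both a top-order frame chain carrying matrix coefficients and lower-order chains carrying \emph{derivatives} of those coefficients) and patching the chartwise estimates into a single compact-open bound on $T^kM$.

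Finally, for the consequence about $C^r_K(M,E)$: since $X.\gamma$ vanishes wherever $\gamma$ does, one has $\Supp(X_j\ldots X_1.\gamma)\sub\Supp(\gamma)\sub K$, so the frame-derivative maps carry $C^r_K(M,E)$ into $C^0_K(M,E)_{c.o.}$. The subspace topology that $C^r_K(M,E)$ inherits from $(C^r(M,E),\cO)$ equals the initial topology with respect to the restrictions of the maps $d^k$ (restriction of an initial topology is initial), and by the first part this in turn equals the initial topology with respect to the restricted frame-derivative maps, which is the asserted description.
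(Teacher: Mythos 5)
Your proof is correct, but its architecture differs from the paper's, so a comparison is worthwhile. The paper never works with $T^kM$ directly: it first localizes the whole statement to chart domains (using that for each $s$ the topology on $C^s(M,E)$ is initial with respect to the restriction maps to chart domains, plus transitivity of initial topologies), transports it to an open set $V\sub\R^m$ via the chart, and there proves a \emph{symmetry} statement: for an arbitrary second tuple $\cG$ of frames, the product-rule induction shows $D_{X_j,\ldots,X_1}$ ($X_i\in\cG_i$) is a finite sum of operators $m_f\circ D_{Y_{i,k_i},\ldots,Y_{1,k_1}}$ with $f\in C^\infty(V)$ and $Y$'s from the $\cF_i$'s, whence $\cT_\cG\sub\cT_\cF$; swapping roles gives $\cT_\cF=\cT_\cG$, and choosing $\cG_i$ to be the coordinate frame it then simply \emph{cites} the known equality $\cO=\cT_\cG$ on open subsets of $\R^m$. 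You instead prove the two inclusions directly on $M$: for $\cT_\cF\sub\cO$ you use the global identity $X_j\ldots X_1.\gamma=d^j\gamma\circ\sigma$ with $\sigma=T^{j-1}X_1\circ\cdots\circ TX_{j-1}\circ X_j\colon M\to T^jM$ a fixed smooth map, an argument the paper never needs (its symmetry trick yields both inclusions at once); and for $\cO\sub\cT_\cF$ you carry out by hand, chartwise and with a compactness patching argument, the equivalence between $d^k$-seminorms and partial-derivative seminorms that the paper disposes of by citation. Both proofs share the same computational engine -- differentiating a rewritten chain produces a top-order frame chain with change-of-basis coefficients plus lower-order chains carrying derivatives of those coefficients, all respecting the prescribed frame order, and your level-by-level substitution (expanding the new outermost derivative in $\cF_{j+1}$ when acting on a length-$j$ chain) is exactly right. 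What your route buys is self-containedness and an elegant one-line first inclusion; what the paper's buys is brevity and freedom from any bookkeeping on iterated tangent bundles. Your treatment of the final assertion on $C^r_K(M,E)$ (frame derivatives preserve supports, and a subspace of an initial topology carries the initial topology of the restricted maps) matches what the paper leaves implicit.
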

\begin{defn}\label{deflrmrtr}
Let $G$ be a Lie group, with identity element $1$.
Given $g\in G$,
we define the left translation map
$L_g\colon G\to G$, $L_g(x):=gx$
and the right translation map
$R_g\colon G\to G$, $R_g(x):=xg$.
Let $\cB$ be a basis of the tangent space $T_1(G)$,
and $E$ be a locally convex space.
For $v\in\cB$, let $\cL_v$ be the left-invariant vector field
on $G$ defined via $\cL_v(g):=T_1(L_g)(v)$,
and $\cR_v$ the right-invariant vector field given by $\cR_v(g):=T_1(R_g)(v)$. 
Write
\[
\cF_L:=\{\cL_v\colon v\in \cB \}\quad\mbox{and}\quad \cF_R:=\{\cR_v \colon v\in \cB \}\,.
\]
Let $K\sub G$ be compact.
Given $r\in \N_0\cup\{\infty\}$, $k,\ell\in\N_0$ with $k+\ell \leq r$, and a continuous seminorm~$p$ on~$E$,
we define $\|\gamma\|^L_{k,p}$ (resp., $\|\gamma\|^R_{k,p}$) for $\gamma\in C^r_K(G,E)$
as the maximum of the numbers
\[
\|X_j\ldots X_1.\gamma\|_{p,\infty}\,,
\]
for $j\in \{0,\ldots, k\}$ and $X_1,\ldots, X_j\in \cF_L$
(resp., $X_1,\ldots, X_j\in \cF_R$).
We also define
$\|\gamma\|^{L,R}_{k,\ell,p}$ (resp., $\|\gamma\|^{R,L}_{k,\ell,p}$)
as the maximum of the numbers
\[
\|X_i\ldots X_1.Y_j\ldots Y_1.\gamma\|_{p,\infty}\,,
\]
for $i\in \{0,\ldots, k\}$, $j\in \{0,\ldots, \ell\}$ and
$X_1,\ldots, X_i\in \cF_L$,
$Y_1,\ldots, Y_j\in \cF_R$
(resp., $X_1,\ldots, X_i\in \cF_R$ and
$Y_1,\ldots, Y_j\in \cF_L$).
Then $\|.\|^L_{k,p}$, $\|.\|^R_{k,p}$, $\|.\|^{L,R}_{k,\ell,p}$
and $\|.\|^{R,L}_{k,\ell,p}$
are seminorms on $C_K^r(G,E)$.
If $E=\R$ and $p=|.|$,
we relax notation and also write
$\|.\|^L_k$, $\|.\|^R_k$, $\|.\|^{L,R}_{k,\ell}$ and $\|.\|^{R,L}_{k,\ell}$
instead of
$\|.\|^L_{k,p}$, $\|.\|^R_{k,p}$, $\|.\|^{L,R}_{k,\ell,p}$ and $\|.\|^{R,L}_{k,\ell,p}$,
respectively. The same symbols will be used for the
corresponding seminorms on $C^r_c(G,E)$ (defined
by the same formulas). For $\ell\in \N_0$ with $\ell\leq r$,
we shall also need the seminorm
$\|.\|_{\ell,K}^L$ on $C^r_c(G)$ defined
as the maximum of the numbers
$\|X_j\ldots X_1.\gamma|_K\|_\infty$
for $j\in \{0,\ldots, \ell\}$ and $X_1,\ldots, X_j\in \cF_L$.
For each compact set $A\sub G$, we have
$\|\gamma\|^L_{\ell,K}\leq \|\gamma\|^L_\ell$
for each $\gamma\in C^r_A(G)$.
Hence $\|.\|^L_{\ell,K}$ is continuous on $C^r_A(G)$
for each~$A$ and hence continuous
on the locally convex direct limit $C^r_c(G)$.
\end{defn}
To enable uniform notation in the proofs for Lie groups and locally compact groups,
we shall write $\|.\|^L_{0,p}:=\|.\|^R_{0,p}:=\|^{R,L}_{0,0,p}:=\|.\|^{L,R}_{0,0,p}:=\|.\|_{p,\infty}$
if $p$ is a continuous seminorm on~$E$ and $G$ a locally compact
group. If $E=\R$ and $K\sub G$ is a compact set, we shall
also write $\|.\|^L_{0,K}:=\|.\|_K$.\\[3mm]
In the situation of Definition~\ref{deflrmrtr}, we have:
\begin{la}\label{enoughsn}
For each $t\in\N_0\cup\{\infty\}$, compact set $K\sub G$
and locally convex space~$E$, the topology on $C^t_K(G,E)$
coincides with the topologies defined by each of the following families
of seminorms:
\begin{itemize}
\item[\rm (a)]
The family of the seminorms $\|.\|^L_{j,p}$, for $j\in \N_0$ such that $j\leq t$
and continuous seminorms~$p$ on~$E$;
\item[\rm (b)]
The family of the seminorms $\|.\|^R_{j,p}$, for $j\in \N_0$ such that $j\leq t$
and continuous seminorms~$p$ on~$E$.
\end{itemize}
If $t<\infty$ and $t=k+\ell$, then the topology on $C^t_K(G,E)$
is also defined by the seminorms $\|.\|^{L,R}_{k,\ell,p}$, for continuous
seminorms $p$ on $E$ $($respectively,
by the seminorms $\|.\|^{R,L}_{k,\ell,p})$.
\end{la}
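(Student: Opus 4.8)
The plan is to reduce everything to Lemma~\ref{topframe}, applied to suitable tuples of frames built from $\cF_L$ and $\cF_R$. First I would record two preliminary observations. Since $T_1(L_g)\colon T_1(G)\to T_g(G)$ and $T_1(R_g)$ are linear isomorphisms, the sets $\cF_L$ and $\cF_R$ are frames on $G$ in the sense of the paragraph preceding Lemma~\ref{framediff}. Moreover, on the subspace $C^0_K(G,E)$ every function vanishes off the compact set $K$, so for each continuous seminorm $p$ on $E$ and each compact $K'\sub G$ one has $\|\gamma\|_{p,K'}\leq\|\gamma\|_{p,\infty}=\|\gamma\|_{p,K}$; hence the compact-open topology on $C^0_K(G,E)$ is already defined by the single family $\{\|\cdot\|_{p,\infty}\colon p\}$.

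For (a), I would apply Lemma~\ref{topframe} (for finite $t$) to the constant $t$-tuple $\cF=(\cF_L,\ldots,\cF_L)$. It gives that the topology on $C^t_K(G,E)$ is initial with respect to the maps $\gamma\mapsto X_j\ldots X_1.\gamma\in C^0_K(G,E)_{c.o.}$ with $j\leq t$ and $X_i\in\cF_L$; combined with the previous observation, this is the topology defined by the pullback seminorms $\gamma\mapsto\|X_j\ldots X_1.\gamma\|_{p,\infty}$. As $\cF_L$ is finite, each $\|\cdot\|^L_{j,p}$ is the maximum of finitely many of these seminorms, so the family $\{\|\cdot\|^L_{j,p}\colon j\leq t,\,p\}$ defines the same topology, which is (a); part (b) is identical with $\cF_R$. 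For $t=\infty$, the space $C^\infty_K(G,E)$ carries the projective limit topology of the $C^r_K(G,E)$, so its topology is defined by $\bigcup_{r\in\N}\{\|\cdot\|^L_{j,p}\colon j\leq r\}=\{\|\cdot\|^L_{j,p}\colon j\in\N_0,\,p\}$, as required.

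For the mixed case with $t=k+\ell<\infty$, I would apply Lemma~\ref{topframe} to the $t$-tuple whose first $\ell$ frames equal $\cF_R$ and whose last $k$ frames equal $\cF_L$. For this tuple the resulting ``staircase'' derivatives (of total order $j'$, with the first $\min(j',\ell)$ factors from $\cF_R$ and the rest from $\cF_L$) all occur among the iterated derivatives bounded by $\|\cdot\|^{L,R}_{k,\ell,p}$, so every pullback seminorm is $\leq\|\cdot\|^{L,R}_{k,\ell,p}$; hence the usual topology is coarser than the one defined by $\{\|\cdot\|^{L,R}_{k,\ell,p}\colon p\}$. For the reverse inequality I would write each right-invariant field as a change of frame $\cR_w=\sum_{v\in\cB}b_{w,v}\cL_v$ with $b_{w,v}\in C^\infty(G)$, and expand, by the Leibniz rule, any mixed derivative $X_i\ldots X_1.Y_j\ldots Y_1.\gamma$ with $i\leq k$, $j\leq\ell$ into a finite sum $\sum_\beta d_\beta\cdot(W^{(\beta)}_{m_\beta}\ldots W^{(\beta)}_1.\gamma)$ with $d_\beta\in C^\infty(G)$, all $W^{(\beta)}_a\in\cF_L$, and $m_\beta\leq i+j\leq t$. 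Since every iterated derivative of a function supported in $K$ is again supported in $K$, taking suprema yields $\|\cdot\|^{L,R}_{k,\ell,p}\leq\bigl(\sum_\beta\sup_K|d_\beta|\bigr)\,\|\cdot\|^L_{t,p}$, so by (a) the mixed seminorms are continuous in the usual topology. The two estimates give equality; the $\|\cdot\|^{R,L}_{k,\ell,p}$ case is symmetric, exchanging the roles of $\cF_L$ and $\cF_R$.

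The hard part is precisely the reverse inequality in the mixed case. Lemma~\ref{topframe} applied to the mixed tuple only controls the staircase derivatives---exactly $\ell$ right-invariant factors followed by up to $k$ left-invariant ones---whereas $\|\cdot\|^{L,R}_{k,\ell,p}$ is the maximum over all \emph{independent} choices $i\leq k$ and $j\leq\ell$. Dominating these extra combinations is what forces the change-of-frame-plus-Leibniz computation above, and there the point to verify is that the resulting $C^\infty$-coefficients are bounded on the support~$K$ (which is where compactness of $K$, together with the fact that supports do not grow under differentiation, enters). Everything else is a formal manipulation of initial topologies and of finite maxima of seminorms.
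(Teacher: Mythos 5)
Your proof is correct, and parts (a), (b) together with the reduction of $t=\infty$ to finite $t$ coincide with the paper's own argument (Lemma~\ref{topframe} applied to constant tuples of frames). Where you genuinely diverge is the hard direction of the mixed case, the continuity of $\|.\|^{L,R}_{k,\ell,p}$ with respect to the usual topology. The paper obtains this, too, purely from Lemma~\ref{topframe}: for each pair $(i,j)$ with $i\leq \ell$, $j\leq k$ it applies the lemma to the $t$-tuple whose first $i$ entries are $\cF_R$, followed by $j$ entries $\cF_L$, followed by $t-i-j$ \emph{arbitrary} frames; the staircase derivative of order exactly $i+j$ for that tuple is precisely the mixed derivative $X_j\ldots X_1.Y_i\ldots Y_1.\gamma$ in question, so its sup-seminorm is continuous, and the maximum over the finitely many $(i,j)$ and fields is $\|.\|^{L,R}_{k,\ell,p}$. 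So your remark that the change-of-frame-plus-Leibniz computation is ``forced'' is not accurate: letting the tuple vary with $(i,j)$ and padding with arbitrary frames avoids it entirely. Your route instead redoes by hand essentially the computation hidden inside the proof of Lemma~\ref{topframe} (its change-of-frame step): writing $\cR_w=\sum_{v\in\cB} b_{w,v}\cL_v$ with $b_{w,v}\in C^\infty(G)$ (valid since $\cF_L$ is a frame) and expanding by the Leibniz rule into left-invariant derivatives of order $\leq i+j\leq t$ with smooth coefficients, the support argument ensuring that only $\sup_K$ of those coefficients enters. This is a correct alternative, and it buys something the paper's argument does not state explicitly: a quantitative domination $\|\gamma\|^{L,R}_{k,\ell,p}\leq C\,\|\gamma\|^L_{t,p}$ by a \emph{single} pure seminorm, with $C$ controlled by sup-norms over $K$ of the frame-change coefficients; the paper's argument yields continuity but no such explicit estimate. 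One pedantic point: your constant $\sum_\beta \sup_K|d_\beta|$ should be the maximum of such sums over the finitely many mixed derivatives being expanded (the $d_\beta$ depend on the chosen fields $X_1,\ldots,X_i,Y_1,\ldots,Y_j$), which is finite since $\cB$ is finite and $i\leq k$, $j\leq\ell$.
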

{\bf Useful automorphisms.}
We record several isomorphisms
of topological vector spaces,
for later use.
\begin{defn}
If $G$ is a group, $\gamma\colon G\to E$
a map to a vector space and $g\in G$,
we define the left translate
$\tau^L_g(\gamma)\colon G\to E$ and the right translate
$\tau^R_g(\gamma)\colon G\to E$ via
$\tau^L_g(\gamma)(x):=\gamma(gx)$
and $\tau^R_g(\gamma)(x):=\gamma(xg)$
for $x\in G$.
\end{defn}
\begin{la}\label{transla}
Let $r\in \N_0\cup\{\infty\}$
and~$E$ be a locally convex space.
If $r=0$, let $G$ be a locally compact group; otherwise,
let~$G$ be a Lie group. Let $g\in G$.
Then $\gamma\mto \tau^L_g(\gamma)$
defines isomorphisms
$C^r(G,E)\to C^r(G,E)$,
$C^r_K(G,E)\to C^r_{g^{-1}K}(G,E)$
$($for $K\sub G$ compact$)$
and $C^r_c(G,E)\to C^r_c(G,E)$
of topological vector spaces.
Likewise, $\gamma\mto\tau^R_g(\gamma)$
defines isomorphisms
$C^r(G,E)\to C^r(G,E)$,
$C^r_K(G,E)\to C^r_{Kg^{-1}}(G,E)$
$($for $K\sub G$ compact$)$
and $C^r_c(G,E)\to C^r_c(G,E)$
of topological vector spaces.
\end{la}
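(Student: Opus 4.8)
The plan is to realize $\tau^L_g$ as composition with the left translation map and to exploit that this composition commutes with differentiation along \emph{left}-invariant vector fields. Since $L_g\colon G\to G$ is a $C^\infty$-diffeomorphism with inverse $L_{g^{-1}}$ (merely a homeomorphism when $r=0$ and $G$ is only locally compact), and since compositions of $C^r$-maps are $C^r$, the assignment $\gamma\mto\tau^L_g(\gamma)=\gamma\circ L_g$ is a linear self-map of $C^r(G,E)$ whose two-sided inverse is $\tau^L_{g^{-1}}$; thus it is a linear bijection. As $L_{g^{-1}}$ is a homeomorphism we have $\Supp(\tau^L_g(\gamma))=g^{-1}\Supp(\gamma)$, so $\tau^L_g$ restricts to a linear bijection $C^r_K(G,E)\to C^r_{g^{-1}K}(G,E)$ for each compact $K\sub G$. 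It then remains only to verify continuity of the map and of its inverse.

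The key step is the commutation relation: for every $v\in\cB$ and every $\gamma\in C^r(G,E)$ with $r\geq 1$,
\[
\cL_v.(\tau^L_g\gamma)=\tau^L_g(\cL_v.\gamma).
\]
This holds because $\cL_v$ is left-invariant, i.e.\ $T(L_g)\circ\cL_v=\cL_v\circ L_g$: unwinding the definitions via the chain rule gives $(\cL_v.(\gamma\circ L_g))(x)=d\gamma(T(L_g)(\cL_v(x)))=d\gamma(\cL_v(gx))=(\cL_v.\gamma)(gx)$. Iterating yields $X_j\cdots X_1.(\tau^L_g\gamma)=\tau^L_g(X_j\cdots X_1.\gamma)$ for all $X_1,\dots,X_j\in\cF_L$.

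I then combine this with the translation-invariance of the global sup-seminorm, namely $\|\tau^L_g\delta\|_{p,\infty}=\sup_{x}p(\delta(gx))=\sup_{y}p(\delta(y))=\|\delta\|_{p,\infty}$ for $\delta\in C^0(G,E)$. Applying this to $\delta=X_j\cdots X_1.\gamma$ and taking maxima gives $\|\tau^L_g\gamma\|^L_{k,p}=\|\gamma\|^L_{k,p}$ for every $k\leq t$ and every continuous seminorm $p$ on~$E$; that is, $\tau^L_g$ is \emph{isometric} for the seminorms of Definition~\ref{deflrmrtr}. By Lemma~\ref{enoughsn}(a) these seminorms define the topology of $C^t_K(G,E)$, so $\tau^L_g\colon C^r_K(G,E)\to C^r_{g^{-1}K}(G,E)$ is an isometric isomorphism. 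For the full space, the analogous computation with the compact-open seminorms gives $\|(X_j\cdots X_1.\tau^L_g\gamma)|_K\|_{p,\infty}=\|(X_j\cdots X_1.\gamma)|_{gK}\|_{p,\infty}$; since $gK$ ranges over all compact sets as $K$ does, Lemma~\ref{topframe} shows that $\tau^L_g$ carries the generating seminorms of $C^r(G,E)$ bijectively onto themselves, hence is a homeomorphism. Finally $C^r_c(G,E)=\bigcup_K C^r_K(G,E)$ carries the direct limit topology, and $\tau^L_g$ restricts on each step to the homeomorphism $C^r_K\to C^r_{g^{-1}K}$ followed by the continuous inclusion into $C^r_c(G,E)$; thus its restriction to each $C^r_K(G,E)$ is continuous, so $\tau^L_g$ is continuous on the direct limit, and likewise for $\tau^L_{g^{-1}}$.

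The statements for $\tau^R_g$ are entirely parallel: replace $L_g$ by $R_g$, the frame $\cF_L$ by $\cF_R$, and invoke Lemma~\ref{enoughsn}(b) in place of (a); here $\Supp(\tau^R_g\gamma)=\Supp(\gamma)g^{-1}$, which accounts for the target $C^r_{Kg^{-1}}(G,E)$. I expect the only genuinely substantive point to be the commutation relation $\cR_v.(\tau^R_g\gamma)=\tau^R_g(\cR_v.\gamma)$, resting on the right-invariance $T(R_g)\circ\cR_v=\cR_v\circ R_g$; once this is in hand, everything reduces to the isometry bookkeeping above. In the case $r=0$ (with $G$ merely locally compact) there are no vector fields involved and the argument collapses to the $k=0$ part, using the conventions $\|.\|^L_{0,p}=\|.\|^R_{0,p}=\|.\|_{p,\infty}$.
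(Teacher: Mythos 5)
Your proposal is correct, but it takes a genuinely different route from the paper. The paper disposes of the whole lemma in a few lines by citing an external result (\cite[Lemma~3.7]{GCX}): pullback along a $C^r$-diffeomorphism (here $L_g$, resp.\ a homeomorphism when $r=0$) is automatically continuous and linear on $C^r(G,E)$; restriction then gives continuity of $C^r_K(G,E)\to C^r_{g^{-1}K}(G,E)$, the direct-limit universal property gives continuity on $C^r_c(G,E)$, and bijectivity follows since $\tau^L_{g^{-1}}$ is a two-sided inverse --- exactly the skeleton you also use for those last steps. Where you differ is the core continuity argument: you reprove it internally, via the commutation relation $\cL_v.(\tau^L_g\gamma)=\tau^L_g(\cL_v.\gamma)$ for left-invariant fields and the frame-seminorm descriptions of the topologies (Lemmas~\ref{enoughsn} and~\ref{topframe}), concluding that translations are \emph{isometries} for the seminorms of Definition~\ref{deflrmrtr}. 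This is precisely the computation the paper defers to the separate Lemma~\ref{normtrans}, so your argument effectively proves that lemma as a byproduct and yields a quantitatively stronger statement; there is no circularity, since the proofs of Lemmas~\ref{enoughsn} and~\ref{topframe} do not use Lemma~\ref{transla}. What the paper's route buys is brevity and generality: it needs no group-adapted frames and works verbatim for composition with any $C^r$-diffeomorphism. Two small points to tidy in your write-up: Lemma~\ref{topframe} is stated only for $r\in\N$, so for $r=\infty$ you should first reduce to finite order via the inclusions $C^\infty(G,E)\to C^n(G,E)$ (as the paper does in its proof of Lemma~\ref{enoughsn}); and in the sentence ``for every $k\leq t$'' the bound should read $k\leq r$.
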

\begin{la}\label{normtrans}
For each $\ell\in \N_0$ such that $\ell\leq r$,
$\gamma\in C^r_c(G)$,
compact subset $K\sub G$ and $g\in G$,
we have
$\|\tau^L_g(\gamma)\|^L_{\ell,g^{-1}K}=\|\gamma\|^L_{\ell,K}$.
\end{la}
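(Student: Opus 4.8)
The plan is to reduce everything to the single fact that the left-translation operator $\tau^L_g$ commutes with differentiation along any left-invariant vector field in $\cF_L$, and then to transport suprema across the bijection $L_g\colon g^{-1}K\to K$.

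First I would establish the commutation relation for a single field. Fix $v\in\cB$ and put $X:=\cL_v$. Since $\tau^L_g(\gamma)=\gamma\circ L_g$, formula (\ref{defnDX}) and the chain rule give, for $x\in G$,
\[
(X.\tau^L_g(\gamma))(x)=d(\gamma\circ L_g)(X(x))=d\gamma\big(T_x(L_g)(X(x))\big).
\]
Now the defining left-invariance of $\cL_v$, namely $T_x(L_g)(\cL_v(x))=\cL_v(gx)$ (which follows from $L_g\circ L_x=L_{gx}$), turns the right-hand side into $d\gamma(\cL_v(gx))=(X.\gamma)(gx)=\tau^L_g(X.\gamma)(x)$. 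Hence
\[
X.\tau^L_g(\gamma)=\tau^L_g(X.\gamma)\qquad\text{for all }X\in\cF_L.
\]

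Next I would iterate. Since each $X_i\in\cF_L$ and $\gamma\in C^r_c(G)$ with $j\le\ell\le r$ guarantees that the relevant derivatives exist and again lie in $C^r_c(G)$ (cf.\ Lemma~\ref{transla}), a short induction on $j$, peeling off the outermost field $X_j$ and applying the single-field relation to the function $X_{j-1}\cdots X_1.\gamma$, yields
\[
X_j\cdots X_1.\tau^L_g(\gamma)=\tau^L_g(X_j\cdots X_1.\gamma)
\]
for all $X_1,\dots,X_j\in\cF_L$.

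Finally I would pass to the seminorms. Writing $\eta:=X_j\cdots X_1.\gamma$, the substitution $y=gx$ identifies $g^{-1}K$ with $K$, so
\[
\|\tau^L_g(\eta)|_{g^{-1}K}\|_\infty=\sup_{x\in g^{-1}K}|\eta(gx)|=\sup_{y\in K}|\eta(y)|=\|\eta|_K\|_\infty.
\]
Combining this with the iterated commutation relation shows that each number $\|X_j\cdots X_1.\tau^L_g(\gamma)|_{g^{-1}K}\|_\infty$ entering the definition of $\|\tau^L_g(\gamma)\|^L_{\ell,g^{-1}K}$ equals the corresponding number $\|X_j\cdots X_1.\gamma|_K\|_\infty$ entering $\|\gamma\|^L_{\ell,K}$; taking the maximum over $j\in\{0,\dots,\ell\}$ and $X_1,\dots,X_j\in\cF_L$ gives the asserted equality. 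I do not expect a serious obstacle here: the entire content is the commutation of $\tau^L_g$ with left-invariant differentiation, and the only place demanding care is the chain-rule computation, where the left-invariance $T_x(L_g)(\cL_v(x))=\cL_v(gx)$ must be inserted in exactly the right spot.
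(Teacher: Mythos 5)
Your proposal is correct and follows essentially the same route as the paper's own proof: the chain-rule computation giving $X.\tau^L_g(\gamma)=\tau^L_g(X.\gamma)$ for $X\in\cF_L$ via left-invariance, iteration over $X_1,\ldots,X_j$, transport of the supremum along the bijection $L_g\colon g^{-1}K\to K$, and passage to the maximum. (Only a cosmetic slip: after applying $i$ vector fields the function lies in $C^{r-i}_c(G)$ rather than $C^r_c(G)$, but since $j\leq\ell\leq r$ this does not affect the induction.)
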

\begin{defn}
If $G$ is a locally compact group,
we let $\lambda_G$ be a Haar\linebreak
measure
on~$G$,
i.e., a left invariant, non-zero Radon measure (cf.\ \ref{measet}).
We let
$\Delta_G\colon G\to \;]0,\infty[$
be the modular function,
determined by $\lambda_G(Ex)=\Delta_G(x)\lambda_G(E)$
for all $x\in G$ and Borel sets $E\sub G$.
It is known that $\Delta_G$ is a continuous homomorphism \cite[2.24]{Fol}
(and hence smooth if~$G$ is a Lie group).
If $\gamma\colon G\to E$ is a mapping to
a vector space,
we define $\gamma^*\colon G\to E$ via
\[
\gamma^*(x):=\Delta_G(x^{-1})\gamma(x^{-1}).
\]
\end{defn}
It is clear from the definition that $(\gamma^*)^*=\gamma$.
\begin{la}\label{invoaut}
Let $E$ be a locally convex space and $r\in \N_0\cup\{\infty\}$.
If $r>0$, let $G$ be a Lie group; if $r=0$,
let $G$ be a locally compact group.
Then all of the following maps are isomorphisms
of topological vector spaces:
\[
\Theta\colon C^r(G,E)\to C^r(G,E)\,,\quad \gamma\mto \gamma^*\, ;
\]
\[
\Theta_K\colon C^r_K(G,E)\to C^r_{K^{-1}}(G,E)\,,\quad \gamma\mto \gamma^*\,,
\]
for $K$ a compact subset of~$G$; and
\[
\Theta_c\colon C^r_c(G,E)\to C^r_c(G,E)\,,\quad \gamma\mto \gamma^*\,.
\]
\end{la}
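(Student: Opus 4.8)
The plan is to reduce all three assertions to the single statement that $\Theta$ is continuous, exploiting that every map in question is \emph{involutive}. Linearity of $\gamma\mto\gamma^*$ is immediate, and the identity $(\gamma^*)^*=\gamma$ recorded above shows that $\Theta$ and $\Theta_c$ are their own inverses while $\Theta_K$ and $\Theta_{K^{-1}}$ are mutually inverse. Since $\Delta_G(x^{-1})>0$ for every $x$, we have $\gamma^*(x)\neq 0$ exactly when $\gamma(x^{-1})\neq 0$, so $\Supp(\gamma^*)=\Supp(\gamma)^{-1}$ (inversion being a homeomorphism); this is precisely what makes $\Theta_K$ take values in $C^r_{K^{-1}}(G,E)$. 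Hence it suffices to prove that $\Theta$ is continuous. Since $\Theta^2=\id$, continuity already makes $\Theta$ a topological isomorphism; $\Theta_K$ is then continuous as the (co)restriction of $\Theta$ to the subspaces $C^r_K(G,E)$ and $C^r_{K^{-1}}(G,E)$, which carry the induced topologies (and its inverse $\Theta_{K^{-1}}$ is continuous for the same reason); and $\Theta_c$ is continuous by the universal property of the direct limit $C^r_c(G,E)=\bigcup_K C^r_K(G,E)$, its restriction to each $C^r_K(G,E)$ being $\Theta_K$ followed by the inclusion $C^r_{K^{-1}}(G,E)\hookrightarrow C^r_c(G,E)$; being self-inverse, $\Theta_c$ is an isomorphism.

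To prove $\Theta$ continuous I factor it as $\Theta=M_h\circ\iota^*$, where $\iota\colon G\to G$, $x\mto x^{-1}$ is inversion, $\iota^*(\gamma):=\gamma\circ\iota$, $h:=\Delta_G\circ\iota$, and $M_h(\eta):=h\cdot\eta$. Here $h\in C^r(G)$, since $\Delta_G$ is a continuous homomorphism and hence smooth when $G$ is a Lie group, and $\iota$ is $C^r$; consequently $\gamma^*=h\cdot(\gamma\circ\iota)\in C^r(G,E)$ is a product of a scalar $C^r$-function with an $E$-valued $C^r$-function. In the case $r=0$ (so $G$ merely locally compact) there are no derivatives to control: $\iota$ is a homeomorphism, whence $\iota^*$ is a topological automorphism of $C(G,E)$ for the compact-open topology, and multiplication by the fixed continuous function $h$ is continuous; thus $\Theta$ is continuous.

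For $r>0$ I treat the two factors separately. For $\iota^*$, the key computation is that $\iota\circ L_g=R_{g^{-1}}\circ\iota$ and $T_1\iota=-\id$ yield $\iota_*\cL_v=-\cR_v$, so that by the chain rule and induction on $j$,
\[
\cL_{v_j}\cdots\cL_{v_1}.(\gamma\circ\iota)=(-1)^j\,\big(\cR_{v_j}\cdots\cR_{v_1}.\gamma\big)\circ\iota .
\]
Thus each left-frame derivative map precomposed with $\iota^*$ equals, up to sign, pullback along the homeomorphism $\iota$ (continuous on $C^0(G,E)_{c.o.}$) of a right-frame derivative map, which is continuous by Lemma~\ref{topframe} applied to the right frames; since by the same lemma the left-frame derivative maps form a defining initial family for $C^r(G,E)$, $\iota^*$ is continuous, and as $(\iota^*)^2=\id$ it is a topological automorphism. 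For $M_h$, the Leibniz rule expresses the frame derivatives of $h\cdot\eta$ up to order $j$ as finite sums of products of frame derivatives of the fixed function $h$ (continuous, hence bounded on each compact set) with frame derivatives of $\eta$, giving continuity into each $C^0(G,E)_{c.o.}$. Hence $\Theta=M_h\circ\iota^*$ is continuous. For $r=\infty$ the $C^\infty$-topology is initial with respect to all finite-order frame-derivative maps, so continuity at each finite order, as just established, is all that is needed.

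The main obstacle I anticipate is the sign-careful bookkeeping in the third paragraph: verifying $\iota_*\cL_v=-\cR_v$ and iterating it correctly, and then ensuring that the swap between left- and right-invariant derivatives produced by $\iota$ lands in \emph{topology-defining} families of maps. This is exactly where the flexibility provided by Lemma~\ref{topframe} (and, at the level of the subspaces $C^r_K(G,E)$, by Lemma~\ref{enoughsn}) --- namely that both the left and the right frames generate the topology --- is indispensable. Everything else, including linearity, the support identity, the involution, and the direct-limit step, is routine.
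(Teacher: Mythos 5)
Your proposal is correct, and its skeleton is exactly that of the paper: factor $\Theta=m_f\circ\iota^*$ (multiplication by $\Delta_G\circ\iota$ after pullback along inversion), deduce that $\Theta$ is an isomorphism from $\Theta\circ\Theta=\id$, obtain $\Theta_K$ as a (co)restriction with inverse $\Theta_{K^{-1}}$, and get $\Theta_c$ from the locally convex direct limit property plus involutivity. The one genuine difference is how the continuity of the two factors is established: the paper simply cites general results (continuity of pullback $\gamma\mto\gamma\circ\eta_G$ along a $C^r$-map, and continuity of multiplication operators $m_f$ on $C^r(G,E)$), whereas you prove both ingredients in-house, via the identity $\iota_*\cL_v=-\cR_v$, the induced formula $\cL_{v_j}\cdots\cL_{v_1}.(\gamma\circ\iota)=(-1)^j(\cR_{v_j}\cdots\cR_{v_1}.\gamma)\circ\iota$, and Lemma~\ref{topframe} (both the left and the right frames define the topology), together with the Leibniz rule for $m_h$. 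Your computation is correct (including the signs and the order of iteration), and your route has the merit of being self-contained and of making visible the left/right swap under inversion, which is precisely the flexibility Lemma~\ref{topframe} was designed to provide; the paper's route is shorter but leans on external references for the continuity of pullback and multiplication operators. Both are complete proofs of the statement.
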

\noindent
{\bf Further facts concerning function spaces.}
Whenever we prove that mappings
to spaces of test functions are discontinuous,
the following embedding will allow us to reduce
to the case of scalar-valued test functions.
\begin{la}\label{Phiv}
For each $C^r$-manifold $M$ $($resp., locally compact space $M$, if $r=0)$,
locally convex space $E$
and $0\not=v\in E$, the map
\[
\Phi_v\colon C^r_c(M)\to C^r_c(M,E)\, ,\quad
\Phi_v(\gamma):=\gamma v
\]
is linear and a topological embedding
$($where $(\gamma v)(x):=\gamma(x)v)$.
\end{la}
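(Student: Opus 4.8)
The plan is to verify linearity and injectivity directly, and to obtain the topological embedding property by exhibiting a \emph{continuous linear left inverse} of $\Phi_v$; this sidesteps any delicate comparison of the direct limit topology on the image with the subspace topology. To begin, note that $\gamma v=\iota_v\circ\gamma$, where $\iota_v\colon\R\to E$, $\lambda\mapsto\lambda v$ is continuous and linear; hence $\gamma v$ is $C^r$, and $\Supp(\gamma v)=\Supp(\gamma)$ because $v\neq 0$. Thus $\Phi_v$ is well defined and linear, it maps $C^r_K(M)$ into $C^r_K(M,E)$ for each compact $K\sub M$, and it is injective (as $\gamma v=0$ forces $\gamma=0$).

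Since $E$ is a Hausdorff locally convex space and $v\neq 0$, the Hahn--Banach theorem provides some $\phi\in E'$ with $\phi(v)=1$. I would then define
\[
\psi\colon C^r_c(M,E)\to C^r_c(M)\,,\qquad \psi(\eta):=\phi\circ\eta\,,
\]
which satisfies $\psi(\Phi_v(\gamma))(x)=\phi(\gamma(x)v)=\gamma(x)\phi(v)=\gamma(x)$, so that $\psi\circ\Phi_v=\id$. Once $\Phi_v$ and $\psi$ are both seen to be continuous, it follows immediately that $\Phi_v$ is a topological embedding: it is a continuous injection whose inverse on the image $\Phi_v(C^r_c(M))$ is the restriction of the continuous map $\psi$, hence continuous for the subspace topology.

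It therefore remains to check continuity of $\Phi_v$ and of $\psi$. By the universal property of the locally convex direct limits $C^r_c(M)=\bigcup_K C^r_K(M)$ and $C^r_c(M,E)=\bigcup_K C^r_K(M,E)$, it suffices to show that the restrictions $C^r_K(M)\to C^r_K(M,E)$ and $C^r_K(M,E)\to C^r_K(M)$ are continuous for each compact $K$. On these spaces the topology is initial with respect to the maps $d^k$ into the compact-open spaces $C(T^kM,-)$ (for $r=0$, simply the compact-open topology). Since $\iota_v$ and $\phi$ are continuous and linear, the chain rule gives $d^k(\gamma v)=(d^k\gamma)\,v$ and $d^k(\phi\circ\eta)=\phi\circ d^k\eta$. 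Hence, for any compact $Q\sub T^kM$ and any continuous seminorm $p$ on $E$,
\[
\sup_{\xi\in Q}p\big(d^k(\gamma v)(\xi)\big)=p(v)\,\sup_{\xi\in Q}|d^k\gamma(\xi)|\,,
\]
which gives continuity of $\Phi_v|_{C^r_K(M)}$; and, choosing by continuity of $\phi$ a continuous seminorm $q$ on $E$ with $|\phi|\le q$,
\[
\sup_{\xi\in Q}\big|d^k(\phi\circ\eta)(\xi)\big|=\sup_{\xi\in Q}|\phi(d^k\eta(\xi))|\le \sup_{\xi\in Q}q(d^k\eta(\xi))\,,
\]
giving continuity of $\psi|_{C^r_K(M,E)}$. (When $M$ is smooth one may equally argue with the frame seminorms of Lemmas~\ref{topframe} and~\ref{enoughsn}, using $X_j\cdots X_1.(\gamma v)=(X_j\cdots X_1.\gamma)\,v$.) I expect no serious obstacle here: the only genuine input is the existence of the functional $\phi$, which rests on $v\neq 0$ together with the standing Hausdorff assumption on $E$, while everything else reduces to the routine commutation of $d^k$ with the continuous linear maps $\iota_v$ and $\phi$.
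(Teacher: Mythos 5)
Your proof is correct, and its skeleton is the same as the paper's: the paper also obtains the embedding property by picking $\lambda\in E'$ with $\lambda(v)=1$ (Hahn--Banach) and using the continuous left inverse $\eta\mapsto\lambda\circ\eta$, after reducing continuity of $\Phi_v$ to its restrictions $\Phi_K\colon C^r_K(M)\to C^r_K(M,E)$ via the direct limit property. The only real difference is how the continuity of $\Phi_K$ (and of the left inverse) is verified: the paper multiplies by a compactly supported bump function $h$ with $h|_K=1$ and invokes a cited result on smoothness of the superposition operator $C^r_L(M,\mu)$ for scalar multiplication $\mu$, whereas you compute directly with the seminorms defining the initial topology, using $d^k(\gamma v)=(d^k\gamma)v$ and $d^k(\phi\circ\eta)=\phi\circ d^k\eta$ for the continuous linear maps $\iota_v$ and $\phi$. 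Your verification is more elementary and self-contained (no bump function, no external superposition result is needed), at the cost of redoing by hand what the paper delegates to its references; both are perfectly valid.
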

The following related result will be used in Section~\ref{secsigma}.
\begin{la}\label{scavect}
Let $r\in \N_0\cup\{\infty\}$.
If $r>0$, let $M$ be a $C^r$-manifold;
if $r=0$, let $M$ be a Hausdorff topological space.
Then
the bilinear mapping\linebreak
$\Psi_E\colon C^r(M)\times E \to C^r(M,E)$, $(\gamma,v)\mto \gamma v$
is continuous.
If $M$ is locally compact and $K\sub M$ compact,
then also $\Psi_{K,E}\colon C^r_K(M)\times E \to C^r_K(M,E)$, $(\gamma,v)\mto \gamma v$
is continuous.
\end{la}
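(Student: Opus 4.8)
The plan is to prove continuity of the bilinear map $\Psi_E$ directly from a product seminorm estimate. Recall that a bilinear map between locally convex spaces is continuous as soon as it is continuous at the origin, and this holds once, for each seminorm $\rho$ taken from a family generating the topology of the target, one produces continuous seminorms $\sigma$ on $C^r(M)$ and $\tau$ on $E$ with $\rho(\gamma v)\le \sigma(\gamma)\,\tau(v)$ for all $\gamma,v$ (checking the estimate on a generating family is enough, since a general continuous seminorm on the target is dominated by a maximum of finitely many generating ones, and maxima of products are bounded by products of maxima). So I would fix a generating seminorm on $C^r(M,E)$ and exhibit such an estimate.

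The technical heart is the identity
\[
d^k(\gamma v)=(d^k\gamma)\,v\qquad\text{on }T^kM,\quad\text{for }k\in\N_0,\ k\le r,
\]
where $d^k\gamma\colon T^kM\to\R$ is the iterated differential of the scalar function $\gamma$ and $(d^k\gamma)v$ denotes $\xi\mto (d^k\gamma(\xi))v$. To establish it I would write $\gamma v=\iota_v\circ\gamma$ with the continuous linear map $\iota_v\colon\R\to E$, $t\mto tv$. Since a continuous linear map $L$ is smooth with $TL=L\times L$ (as $dL(x)=L$), the chain rule gives $d(L\circ f)=L\circ df$ for any $C^1$-map $f$; an easy induction on $k$ then yields $d^k(\iota_v\circ\gamma)=\iota_v\circ d^k\gamma$, which is exactly the claimed identity. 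For $r=0$ only $k=0$ occurs, with $T^0M=M$ and $d^0\gamma=\gamma$, so the same reasoning also covers the case where $M$ is merely a Hausdorff space.

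Recall that the topology on $C^r(M,E)$ is initial with respect to the maps $d^k\colon C^r(M,E)\to C(T^kM,E)_{c.o.}$ ($k\le r$), hence generated by the seminorms $\gamma\mto\|d^k\gamma|_Q\|_{p,\infty}$ for $Q\sub T^kM$ compact and $p$ a continuous seminorm on $E$. Using the identity and absolute homogeneity of $p$,
\[
\|d^k(\gamma v)|_Q\|_{p,\infty}=\sup_{\xi\in Q}p\big((d^k\gamma(\xi))\,v\big)=\Big(\sup_{\xi\in Q}|d^k\gamma(\xi)|\Big)\,p(v)=\|d^k\gamma|_Q\|_\infty\cdot p(v).
\]
Here $\gamma\mto\|d^k\gamma|_Q\|_\infty$ is a continuous seminorm on the scalar space $C^r(M)$ (it is the pullback of a compact-open seminorm under the continuous map $d^k$), and $v\mto p(v)$ is a continuous seminorm on $E$. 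This is the required product estimate (indeed an equality), so $\Psi_E$ is continuous.

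The compact-support version I would obtain by corestriction. If $\gamma\in C^r_K(M)$ then $\gamma(x)=0$ forces $\gamma(x)v=0$, so $\Supp(\gamma v)\sub\Supp(\gamma)\sub K$ and $\Psi_E$ maps $C^r_K(M)\times E$ into $C^r_K(M,E)$. Since $C^r_K(M)$ and $C^r_K(M,E)$ carry the subspace topologies induced from $C^r(M)$ and $C^r(M,E)$, continuity of $\Psi_{K,E}$ follows immediately from that of $\Psi_E$. The only step that genuinely requires care is the differential identity $d^k(\gamma v)=(d^k\gamma)v$; once it is in hand, everything else reduces to unwinding the definitions of the topologies and applying the standard bilinear continuity criterion.
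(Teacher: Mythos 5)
Your proof is correct, but it takes a genuinely different route from the paper's. The paper's proof factors $\Psi_E$ through the map $\Theta\colon E\to C^r(M,E)$ sending $v$ to the constant function $x\mto v$ (continuous since $\|\Theta(v)|_K\|_{p,\infty}\leq p(v)$ and $d^k(\Theta(v))=0$ for $k\geq 1$), and then invokes the fact that $C^r(M,E)$ is a topological $C^r(M)$-module: pointwise multiplication $C^r(M,\mu)\colon C^r(M)\times C^r(M,E)\to C^r(M,E)$ is continuous, a result cited from the literature (\cite[Proposition~4.16]{ZOO} for $r>0$, \cite[Lemma~3.9]{GCX} for $r=0$), whence $\Psi_E(\gamma,v)=C^r(M,\mu)(\gamma,\Theta(v))$ is continuous as a composition. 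You instead establish the differential identity $d^k(\gamma v)=(d^k\gamma)v$ directly, by writing $\gamma v=\iota_v\circ\gamma$ with the continuous linear map $\iota_v\colon\R\to E$ and applying the chain rule inductively, and then verify the exact product formula $\|d^k(\gamma v)|_Q\|_{p,\infty}=\|d^k\gamma|_Q\|_\infty\, p(v)$ for the seminorms generating the topology of $C^r(M,E)$, which gives continuity via the standard product-estimate criterion for bilinear maps. Your argument is more elementary and self-contained (no appeal to the module-multiplication machinery, which in the case of $C^r(M,\mu)$ rests on the nontrivial smoothness of pushforward by the bilinear map $\mu$), and it even produces an equality of seminorms rather than a mere estimate; the paper's argument is shorter given its references and situates the lemma within the module structure used elsewhere in that framework. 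One further point in your favor: the paper's printed proof addresses only $\Psi_E$ and leaves the compactly supported case implicit, whereas your corestriction argument for $\Psi_{K,E}$ (supports are preserved, and $C^r_K(M)$, $C^r_K(M,E)$ carry the topologies induced by $C^r(M)$, $C^r(M,E)$) spells out exactly the missing step.
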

\begin{la}\label{towardsh}
Let $E$ be a locally convex space, and $r\in \N_0\cup\{\infty\}$.
If $r=0$, let $M$ be a locally compact space.
If $r>0$, let $M$ be a $C^r$-manifold. Then:
\begin{itemize}
\item[\rm(a)]
For each compact set $K\sub M$,
there exists a family $(\lambda_i)_{i\in I}$
of continuous linear maps $\lambda_i\colon E\to F_i$ to Banach spaces~$F_i$,
such that the topology on $C^r_K(M,E)$
is initial with respect to the linear mappings $C^r_K(M,\lambda_i):$\linebreak
$C^r_K(M,E)\to
C^r_K(M,F_i)$, $\gamma\mto \lambda_i\circ\gamma$
for $i\in I$.
\item[\rm(b)]
If $M$ is $\sigma$-compact, then there exists a family $(\lambda_i)_{i\in I}$
of continuous linear maps $\lambda_i\colon E\to F_i$ to Fr\'{e}chet spaces $F_i$,
such that the topology on $C^r_c(M,E)$
is initial with respect to the linear mappings $C^r_c(M,\lambda_i):$\linebreak
$C^r_c(M,E)\to C^r_c(M,F_i)$,
$\gamma\mto \lambda_i\circ \gamma$ for $i\in I$.
\item[\rm(c)]
If $M$ is paracompact and $B\sub C^r_c(M,E)$ is a bounded set, then $B\sub C^r_K(M,E)$
for some compact set
$K\sub M$.
\end{itemize}
\end{la}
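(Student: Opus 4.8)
The plan is to treat the three assertions in turn, building (b) and (c) on the local-Banach description obtained in (a).

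For (a) I would pass to the local Banach spaces of $E$. For each continuous seminorm $p$ on $E$, let $E_p$ be the completion of $E/p^{-1}(0)$ in the norm induced by $p$, and let $\lambda_p\colon E\to E_p$ be the canonical continuous linear map, so that $\|\lambda_p(v)\|_{E_p}=p(v)$ for all $v\in E$. Take $I$ to be the set of all continuous seminorms on $E$ and $F_p:=E_p$. Since $\lambda_p$ is linear and continuous, $d^k(\lambda_p\circ\gamma)=\lambda_p\circ d^k\gamma$ for every $\gamma\in C^r_K(M,E)$, whence
\[
\|d^k(\lambda_p\circ\gamma)|_L\|_{\|\cdot\|_{E_p},\infty}
=\sup_{\xi\in L}\|\lambda_p(d^k\gamma(\xi))\|_{E_p}
=\sup_{\xi\in L}p(d^k\gamma(\xi))
=\|d^k\gamma|_L\|_{p,\infty}
\]
for all $k\le r$ and compact $L\sub T^kM$. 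As the right-hand sides are exactly the seminorms defining the topology on $C^r_K(M,E)$, while the left-hand sides are the pullbacks under $C^r_K(M,\lambda_p)$ of the defining seminorms of $C^r_K(M,E_p)$ (so that, in particular, each $C^r_K(M,\lambda_p)$ is continuous), the topology on $C^r_K(M,E)$ is initial with respect to the family $(C^r_K(M,\lambda_p))_{p\in I}$, as required.

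For (b) I would first use $\sigma$-compactness to choose a countable, locally finite partition of unity $(h_j)_{j\in\N}$ with compact supports $K_j:=\Supp(h_j)$. By Lemma~\ref{lcsum} the map $\gamma\mapsto(h_j\gamma)_{j\in\N}$ embeds $C^r_c(M,E)$ into the countable direct sum $\bigoplus_{j\in\N}C^r_{K_j}(M,E)$; hence, by Remark~\ref{semisum} together with the description of the seminorms on each summand from (a), the topology on $C^r_c(M,E)$ is generated by the seminorms
\[
\nu(\gamma)=\sum_{j\in\N}\,\max_{k\le N_j}\|d^k(h_j\gamma)|_{L_{j,k}}\|_{p_j,\infty},
\]
one for each choice of integers $N_j$, compact sets $L_{j,k}\sub T^kM$, and continuous seminorms $p_j$ on $E$. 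Given such a datum I set $F:=\prod_{j\in\N}E_{p_j}$, a countable product of Banach spaces and hence a Fréchet space, and let $\lambda:=(\lambda_{p_j})_{j\in\N}\colon E\to F$, which is continuous and linear. Using $\pr_j\circ\lambda=\lambda_{p_j}$ and the identity from (a), the seminorm
\[
Q(\delta):=\sum_{j\in\N}\,\max_{k\le N_j}\|d^k(\pr_j\circ(h_j\delta))|_{L_{j,k}}\|_{\|\cdot\|_{E_{p_j}},\infty}
\]
on $C^r_c(M,F)$ satisfies $Q\circ C^r_c(M,\lambda)=\nu$. Crucially, on each step $C^r_{K}(M,F)$ only the finitely many indices $j$ with $K_j\cap K\neq\emptyset$ contribute (local finiteness), so $Q$ is a finite sum there and hence continuous on the inductive limit; thus $\nu$ is the pullback of a continuous seminorm, and letting the datum vary yields the desired initial description with Fréchet targets.

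For (c) I would reduce to the $\sigma$-compact case. A paracompact $M$ is the disjoint union of a family $(M_s)_{s\in P}$ of open-closed $\sigma$-compact subspaces (the connected components when $r>0$, and the $\sigma$-compact clopen pieces of the topological-sum decomposition of a locally compact paracompact space when $r=0$). By Lemma~\ref{SJ} this gives $C^r_c(M,E)\cong\bigoplus_{s\in P}C^r_c(M_s,E)$. A bounded subset of a locally convex direct sum is contained in the sum of finitely many summands and projects to a bounded set in each \cite{Bou}, so it suffices to show that a bounded $B\sub C^r_c(M_s,E)$ with $M_s$ $\sigma$-compact lies in some $C^r_K(M_s,E)$. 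Fixing an exhaustion $K_1\sub K_2\sub\cdots$ of $M_s$ with $K_n\sub K_{n+1}^\circ$, suppose not: then for each $n$ there are $\gamma_n\in B$ and $x_n\in\Supp(\gamma_n)\setminus K_n$ with $\gamma_n(x_n)\neq0$, and the $x_n$ leave every compact set, so $\{x_n\colon n\in\N\}$ is closed and discrete. Choosing continuous seminorms $p_n$ on $E$ with $c_n:=p_n(\gamma_n(x_n))>0$ and scalars $\alpha_n:=n/c_n$, the map $\gamma\mapsto\sum_n\alpha_n\,p_n(\gamma(x_n))$ is a seminorm that is a \emph{finite} sum on each $C^r_K(M_s,E)$, hence continuous on $C^r_c(M_s,E)$, yet takes values $\ge n$ on $B$; this contradicts boundedness. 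Taking $K:=\bigcup_{s\in S_0}K_s$ over the finite index set $S_0$ then proves the claim.

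The main obstacle is part (b): unlike (a), the target topology is an inductive (not projective) limit, so pulling the topology back summand-by-summand would produce seminorms that need not be continuous on the direct limit. The device that resolves this is to absorb a whole sequence of local Banach spaces into a single Fréchet space $\prod_j E_{p_j}$ and to exploit the local finiteness of the partition of unity, which forces the glued seminorm $Q$ to reduce to a finite sum on each step and hence to be continuous.
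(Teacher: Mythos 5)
Your proof is correct, and for parts (a) and (b) it is essentially the paper's own argument: in (a) you use the local Banach spaces attached to the continuous seminorms of $E$ together with the identity $d^k(\lambda_p\circ\gamma)=\lambda_p\circ d^k\gamma$, and in (b) you combine the partition-of-unity embedding of Lemma~\ref{lcsum} with the key device of absorbing the countably many Banach targets into the single Fr\'{e}chet space $\prod_{j\in\N}E_{p_j}$ --- exactly the paper's construction, only phrased with generating seminorms instead of $0$-neighbourhoods (the paper pulls back box neighbourhoods of the direct sum; the continuity of $C^r_c(M,\lambda)$, which you leave implicit, is the routine half of the initial-topology claim). The only genuine divergence is the endgame of (c). Both you and the paper split the paracompact $M$ into disjoint clopen $\sigma$-compact pieces \cite[Theorem~5.1.27]{Eng}, apply Lemma~\ref{SJ}, and use that a bounded subset of a locally convex direct sum lies in a finite subsum, thereby reducing to $\sigma$-compact $M$; at that point the paper observes that $C^r_c(M,E)$ is a strict countable inductive limit of the closed steps $C^r_{K_n}(M,E)$ and simply cites \cite[II.6.5]{SaW}, whereas you give a self-contained contradiction argument: an explicit seminorm $\gamma\mapsto\sum_n\alpha_n\,p_n(\gamma(x_n))$ with diverging weights, which is continuous on $C^r_c(M,E)$ because only finitely many of the points $x_n$ meet any compact set, yet is unbounded on $B$. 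This is the classical hands-on proof of the inductive-limit fact, specialized to test-function spaces; it buys independence from the strict-LF machinery (you need not verify that each step is a closed topological vector subspace of the next), at the cost of a somewhat longer argument, and it yields only containment $B\sub C^r_K(M,E)$ rather than boundedness in the step --- which is all the statement asks. One small point of hygiene: the existence of $x_n\notin K_n$ with $\gamma_n(x_n)\neq 0$ should be justified by noting that the open set $M_s\setminus K_n$ meets the closure of $\{\gamma_n\neq0\}$ and hence meets $\{\gamma_n\neq0\}$ itself; a generic point of $\Supp(\gamma_n)\setminus K_n$ need not be a point of nonvanishing.
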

\section{Basic facts concerning convolution}
Throughout this section, $G$ is a locally compact group, with left Haar measure $\lambda_G$,
and $b\colon E_1\times E_2\to F$
a continuous bilinear map between locally convex spaces.
As in the previous section, we refer to Appendix~\ref{appproofs} for all
proofs.
If~$G$ is not metrizable, we assume that~$F$ satisfies the
convex compactness property.
If $G$ is metrizable and not discrete, we assume that~$F$ is
sequentially complete or satisfies the metric convex compactness property.
Given $\gamma\in C(G,E_1)$ and $\eta\in C(G,E_2)$
such that $\gamma$ or $\eta$ has compact support,
we define
\[
\gamma*_b\eta\colon G\to F,\quad (\gamma*_b\eta)(x):=\int_Gb(\gamma(y),\eta(y^{-1}x))\,d\lambda_G(y)\, ,
\]
noting that the $E$-valued weak integral exists by Lemma~\ref{intex}
as the map $G\to F$, $y\mapsto b(\gamma(y),\eta(y^{-1}x))$
is continuous with support in the compact set
\begin{equation}\label{suppgand}
\Supp(\gamma)\cap x(\Supp(\eta))^{-1}\,.
\end{equation}
In particular,
\begin{equation}\label{goodpar}
(\gamma*_b\eta)(x)=
\int_{\Supp(\gamma)}b(\gamma(y),\eta(y^{-1}x))\,d\lambda_G(y)\,.
\end{equation}
If $b$ is understood, we simply write $\gamma*\eta:=\gamma*_b\eta$.
Consider the inversion map
$j_G\colon G\to G$, $g\mto g^{-1}$.
It is well known (see \cite[2.31]{Fol}) that
the image measure $j_G(\lambda_G)$
is of the form
\begin{equation}
j_G(\lambda_G)=\Delta_G(y^{-1})\,d\lambda_G(y)\, .
\end{equation}
Since $y^{-1}x=(x^{-1}y)^{-1}=j_G(L_{x^{-1}}(y))$, we infer
$j_G(L_{x^{-1}}(\lambda_G))=j_G(\lambda_G)=\Delta_G(y^{-1})\,d\lambda_G(y)$.
Now the Transformation Formula implies that\footnote{Apply continuous linear functionals
and use \cite[17.3 and 19.3]{Bau}.}
$(\gamma*_b\eta)(x)=
\int_Gb\circ (\gamma\circ L_x\circ j_G,\eta) \circ j_G\circ L_{x^{-1}}\, d\lambda_G
=\int_Gb\circ (\gamma\circ L_x\circ j_G,\eta) d((j_G\circ L_{x^{-1}})(\lambda_G))
=\int_Gb\circ (\gamma\circ L_x\circ j_G,\eta) \, d(\Delta_G(.^{-1})\odot \lambda_G)
=\int_Gb(\gamma(xz^{-1}),\eta(z)) \Delta_G(z^{-1})\, d\lambda_G(z)$.
Thus
\begin{equation}\label{altform}
(\gamma*_b\eta)(x)=\int_Gb(\gamma(xz^{-1}),\Delta_G(z^{-1})\eta(z))\, d\lambda_G(z)
\end{equation}
for all
$\gamma$, $\eta$ as above and $x\in G$. 
\begin{la}\label{convcts}
Let $K,L\sub G$ be closed sets and~$K$ or~$L$ be compact.
For all $\gamma\in C_K(G,E_1)$, $\eta\in C_L(G,E_2)$, we then have
$\gamma*_b\eta\in C_{KL}(G,F)$,
and
\begin{equation}\label{estsupp}
\Supp(\gamma*_b\eta)\sub\Supp(\gamma)\Supp(\eta)\sub KL\,.
\end{equation}
The bilinear map
$\beta\colon C_K(G,E_1)\times C_L(G,E_2)\to C_{KL}(G,F)$,
$(\gamma,\eta)\mapsto \gamma*_b\eta$
is continuous.
\end{la}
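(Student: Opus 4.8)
The plan is to establish the support estimate first, then continuity of $\gamma\ast_b\eta$ as a function, and finally continuity of the bilinear map $\beta$ via seminorm estimates. For the support, I would argue that if $x\notin \Supp(\gamma)\Supp(\eta)$, then for every $y$ the integrand $b(\gamma(y),\eta(y^{-1}x))$ vanishes: indeed, if $y\notin\Supp(\gamma)$ then $\gamma(y)=0$, and if $y\in\Supp(\gamma)$ but $y^{-1}x\notin\Supp(\eta)$ then $\eta(y^{-1}x)=0$; the only way to have both factors nonzero is $y\in\Supp(\gamma)$ and $x\in y\,\Supp(\eta)\sub\Supp(\gamma)\Supp(\eta)$, contradicting the choice of $x$. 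Hence $(\gamma\ast_b\eta)(x)=0$ off $\Supp(\gamma)\Supp(\eta)$, so by closedness of the support and the inclusion $\Supp(\gamma)\Supp(\eta)\sub KL$ (a product of a compact and a closed set, hence closed when one factor is compact) we obtain (\ref{estsupp}).

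Next I would verify that $\gamma\ast_b\eta$ is continuous on $G$. Using the alternative formula (\ref{altform}), write $(\gamma\ast_b\eta)(x)=\int_G b(\gamma(xz^{-1}),\Delta_G(z^{-1})\eta(z))\,d\lambda_G(z)$, where the integrand is supported in the fixed compact set $\Supp(\eta)$ (in the $z$-variable, uniformly for $x$ ranging in a compact neighbourhood, after using the $K$-or-$L$-compact hypothesis to control the relevant supports). Continuity in $x$ then follows from continuity of $b$, of $\gamma$ and $\Delta_G$, and a standard dominated-convergence / uniform-continuity argument on the compact support, so that passing to the limit under the integral sign is justified by Lemma \ref{intex} applied to weak integrals. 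This gives $\gamma\ast_b\eta\in C_{KL}(G,F)$.

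For the continuity of $\beta$, since the compact-open topology on $C_{KL}(G,F)$ is generated by the seminorms $\|\cdot\|_{p,A}$ for $p$ a continuous seminorm on $F$ and $A\sub G$ compact, it suffices to bound $\|\gamma\ast_b\eta\|_{p,A}$ by a product of a continuous seminorm of $\gamma$ and one of $\eta$. Given $p$, continuity of $b$ furnishes continuous seminorms $p_1$ on $E_1$ and $p_2$ on $E_2$ with $p(b(u,v))\le p_1(u)p_2(v)$. Plugging the pointwise bound $p\big((\gamma\ast_b\eta)(x)\big)\le\int_{\Supp(\gamma)}p_1(\gamma(y))\,p_2(\eta(y^{-1}x))\,d\lambda_G(y)$ and estimating each factor by its sup over the (compact, fixed) supports, I expect an inequality of the shape $\|\gamma\ast_b\eta\|_{p,A}\le C\,\|\gamma\|_{p_1,K}\,\|\eta\|_{p_2,L}$, where $C=\lambda_G(K)$ (or $\lambda_G(\Supp(\gamma))$) absorbs the measure of the integration domain. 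Because $K$ and $L$ are \emph{fixed} closed sets with one of them compact, this constant is finite and independent of $\gamma,\eta$, which is exactly what bilinear continuity on $C_K(G,E_1)\times C_L(G,E_2)$ requires.

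The main obstacle is the bookkeeping that makes the integration domain, and hence the constant $C$, genuinely bounded and uniform: one must use the compactness of $K$ or $L$ to ensure the effective support of the integrand (\ref{suppgand}) stays inside a single compact set as $x$ ranges over $A$, and handle the asymmetry between the two cases ($K$ compact versus $L$ compact) — in the latter case the formula (\ref{altform}) with the modular factor $\Delta_G$ is the convenient one, and $\Delta_G$ must be bounded on the relevant compact set, which it is by its continuity. Everything else is a routine application of the product estimate coming from continuity of $b$ together with the definition of the compact-open seminorms.
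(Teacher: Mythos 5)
Your three-step plan --- support estimate by pointwise vanishing of the integrand, continuity of $\gamma*_b\eta$ via parameter-dependent integrals (formula (\ref{goodpar}) when $K$ is compact, formula (\ref{altform}) with the modular factor when $L$ is compact), and continuity of $\beta$ from a product estimate $q(b(v,w))\leq p_1(v)p_2(w)$ furnished by continuity of~$b$ --- is exactly the structure of the paper's proof, and your first two steps are essentially correct as described (one small correction: the tool that legitimizes continuity of the parameter-dependent weak integral is Lemma~\ref{ctspar}, not Lemma~\ref{intex}, which only gives existence of the integral).

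Your final step, however, contains a genuine flaw. You propose the bound $\|\gamma*_b\eta\|_{p,A}\leq C\,\|\gamma\|_{p_1,K}\,\|\eta\|_{p_2,L}$, ``estimating each factor by its sup over the (compact, fixed) supports'' --- but only \emph{one} of $K$, $L$ is assumed compact. If $K$ is compact and $L$ merely closed, then $\|\eta\|_{p_2,L}=\sup_{x\in L}p_2(\eta(x))$ may well be infinite (take $G=\R$, $L=\R$, $\eta(x)=x$), and even when finite it is not a continuous seminorm for the compact-open topology on $C_L(G,E_2)$; so no inequality of this shape can establish continuity of~$\beta$. What rescues the argument --- and what the paper actually does --- is the observation that for $x\in A$ and $y\in\Supp(\gamma)\sub K$, the function $\eta$ is only ever evaluated at points $y^{-1}x\in K^{-1}A$, a compact set. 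The correct estimate is therefore
\[
\|(\gamma*_b\eta)|_A\|_{q,\infty}\;\leq\;\lambda_G(K)\,\|\gamma\|_{p_1,\infty}\,\|\eta|_{K^{-1}A}\|_{p_2,\infty}\,,
\]
in which the second factor is a sup over a compact set depending on the test set $A$ (not on $L$), hence a continuous compact-open seminorm on $C_L(G,E_2)$. Symmetrically, when $L$ is compact one uses (\ref{altform}) and obtains the bound $\lambda_G(L)\,\|\gamma|_{AL^{-1}}\|_{p_1,\infty}\,\|\eta\|_{p_2,\infty}\,\|\Delta_G|_{L^{-1}}\|_\infty$. Note that your closing paragraph only addresses keeping the \emph{integration domain} compact, i.e.\ the finiteness of the constant $C$; that is a separate issue from the sets over which the sups of $\gamma$ and $\eta$ are taken, and it is in the latter that the compactness of $K$ or $L$ (together with compactness of $A$) must genuinely be used.
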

Let $G$ be a Lie group now.
\begin{prop}\label{diffbar}
Let $r,s\in \N_0\cup\{\infty\}$ and $K,L\sub G$ be closed subsets
such that $K$ or $L$ is compact.
Then $\gamma*_b\eta\in C_{KL}^{r+s}(G,F)$
for all
$\gamma\in C_K^r(G,E_1)$ and $\eta\in C_L^s(G,E_2)$,
with
\begin{equation}\label{hinein}
\cR_{w_j}\cdots \cR_{w_1}\cL_{v_i}\cdots\cL_{v_1}.(\gamma*_b\eta)
=
(\cR_{w_j}\cdots \cR_{w_1}.\gamma)*_b(\cL_{v_i}\cdots\cL_{v_1}.\eta)
\end{equation}
for all $i,j\in \N_0$ with $i\leq s$ and $j\leq r$,
and all $v_1,\ldots,v_i,w_1,\ldots, w_j\in T_1(G)$.
Moreover, the bilinear map
\[
\beta\colon C_K^r(G,E_1)\times C_L^s(G,E_2)\to C^{r+s}_{KL}(G,F)\,,\quad (\gamma,\eta)\mapsto \gamma*_b\eta
\]
is continuous.
\end{prop}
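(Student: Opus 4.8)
The plan is to reduce everything to the already-established continuous case $r=s=0$ (Lemma~\ref{convcts}) by means of the differentiation formula (\ref{hinein}), which expresses every iterated invariant derivative of $\gamma*_b\eta$ as a convolution of derivatives of the two factors. First I would prove the two first-order instances of (\ref{hinein}). Differentiating the defining formula (\ref{defco}) along a left-invariant field: since $(\cL_v.\delta)(x)=\frac{d}{dt}\big|_{t=0}\delta(x\exp(tv))$ and $x$ enters (\ref{defco}) only through $\eta(y^{-1}x)$, differentiation under the integral sign yields $\cL_v.(\gamma*_b\eta)=\gamma*_b(\cL_v.\eta)$ whenever $\eta\in C^1_L$. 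Symmetrically, differentiating the alternative formula (\ref{altform}) along a right-invariant field $\cR_w$, where $x$ enters only through $\gamma(xz^{-1})$ and the factor $\Delta_G(z^{-1})$ does not involve $x$, gives $\cR_w.(\gamma*_b\eta)=(\cR_w.\gamma)*_b\eta$ whenever $\gamma\in C^1_K$. In both cases the interchange of $\frac{d}{dt}$ and the integral is legitimate because the integrand is continuously differentiable in the parameter~$x$ and has support in a compact set that stays fixed as $x$ varies locally (cf.\ (\ref{suppgand})), so the parameter-dependent weak integral is $C^1$ with the stated derivative.

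Next I would bootstrap to higher order using the frames $\cF_R$, $\cF_L$ of Definition~\ref{deflrmrtr}. By induction on $r$, combining the first-order $\cR$-formula with Lemma~\ref{framediff} for the single frame $\cF_R$, one shows for $\gamma\in C^r_K$ and continuous $\eta$ that $\gamma*_b\eta$ is $C^r$ and that $\cR_{w_j}\cdots\cR_{w_1}.(\gamma*_b\eta)=(\cR_{w_j}\cdots\cR_{w_1}.\gamma)*_b\eta$ for $j\le r$, the inductive step applying the hypothesis to $\cR_{w_1}.\gamma\in C^{r-1}_K$. A mirror induction with $\cF_L$ gives the left-handed statement for $\gamma\in C^0_K$, $\eta\in C^s_L$. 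For $\gamma\in C^r_K$ and $\eta\in C^s_L$, each operator $\cR_{w_j}\cdots\cR_{w_1}$ ($j\le r$) then sends $\gamma*_b\eta$ to $(\cR_{w_j}\cdots\cR_{w_1}.\gamma)*_b\eta$, which by the left-handed statement applied to the $C^0$-function $\cR_{w_j}\cdots\cR_{w_1}.\gamma$ and $\eta\in C^s_L$ is $C^s$. A final application of Lemma~\ref{framediff} with $r$ copies of $\cF_R$ and $\ell=s$ yields $\gamma*_b\eta\in C^{r+s}_{KL}$ (support in $KL$ by (\ref{estsupp})); peeling off first the $\cL$-derivatives and then the $\cR$-derivatives, and using that left- and right-invariant fields commute, establishes (\ref{hinein}) in full. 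When $r$ or $s$ is infinite one runs the same argument at each finite order, using that $C^\infty_{KL}$ carries the initial topology with respect to the maps into $C^n_{KL}$, $n\in\N_0$.

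For continuity I would test against the seminorms generating the target topology. For $t=r+s<\infty$, Lemma~\ref{enoughsn} tells us the topology of $C^{r+s}_{KL}(G,F)$ is defined by the mixed seminorms $\|.\|^{R,L}_{r,s,p}$, $p$ a continuous seminorm on $F$. By (\ref{hinein}) the value of such a seminorm on $\gamma*_b\eta$ is the maximum, over products of at most $r$ right- and at most $s$ left-invariant frame fields, of $\|(\cR_{w_i}\cdots\cR_{w_1}.\gamma)*_b(\cL_{v_j}\cdots\cL_{v_1}.\eta)\|_{p,\infty}$. Continuity of the $C^0$-convolution (Lemma~\ref{convcts}) supplies, for this $p$, continuous seminorms $q_1$ on $E_1$ and $q_2$ on $E_2$ with $\|\alpha*_b\delta\|_{p,\infty}\le\|\alpha\|_{q_1,\infty}\,\|\delta\|_{q_2,\infty}$, whence that maximum is bounded by $\|\gamma\|^R_{r,q_1}\,\|\eta\|^L_{s,q_2}$, a product of a continuous seminorm on $C^r_K(G,E_1)$ and one on $C^s_L(G,E_2)$ (both continuous by Lemma~\ref{enoughsn}). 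By the product-estimate criterion for continuity of bilinear maps, $\beta$ is continuous. For $t=r+s=\infty$ it suffices to have continuity into each $C^n_{KL}$ with $n$ finite; choosing finite $k\le r$, $\ell\le s$ with $k+\ell=n$ and repeating the estimate with $\|.\|^{R,L}_{k,\ell,p}$ gives exactly this.

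The step I expect to be the main obstacle is the very first one: rigorously justifying differentiation under the integral sign for the $F$-valued weak integral and, more precisely, upgrading mere existence of the directional derivatives along the frame fields to genuine $C^1$-regularity in the sense used here, uniformly enough that the derivative is again a convolution-type function with the correct support. Once the clean first-order formulas are in hand, the passage to arbitrary order and the continuity estimate are essentially bookkeeping with the frame seminorms of Definition~\ref{deflrmrtr} and Lemma~\ref{enoughsn}.
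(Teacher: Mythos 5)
Your differentiability argument is essentially the paper's own proof: the first-order identity for $\cL_v$ from (\ref{defco}) and for $\cR_w$ from (\ref{altform}), justified by differentiation under the weak integral sign after localizing the domain of integration in a compact set (the paper's Lemma~\ref{diffpar}, applied exactly as you indicate), then induction to get the pure-$\cL$ and pure-$\cR$ identities, and Lemma~\ref{framediff} to combine them into $C^{r+s}$-regularity and (\ref{hinein}). One remark: the commutation of left- and right-invariant fields you invoke is true but unnecessary, since in (\ref{hinein}) the $\cL$'s act first; one peels them off with the left identity and then applies the right identity to $\gamma$ and the continuous second factor $\cL_{v_i}\cdots\cL_{v_1}.\eta$.

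The genuine gap is in your continuity step. The proposition allows one of $K$, $L$ to be merely closed, and then $KL$ is closed but in general noncompact. Lemma~\ref{enoughsn} is stated (and only true) for compact support sets, so you cannot invoke it for $C^{r+s}_{KL}(G,F)$: its seminorms $\|.\|^{R,L}_{r,s,p}$ are suprema over all of $G$, which need not be finite, let alone continuous, on $C^{r+s}_{KL}(G,F)$, whose topology is that of uniform convergence of derivatives on compact sets. For the same reason your majorant $\|\gamma\|^R_{r,q_1}\,\|\eta\|^L_{s,q_2}$ is not a product of continuous seminorms when $K$ or $L$ is noncompact (the second factor can be infinite for $\eta\in C^s_L(G,E_2)$ with $L$ noncompact); note also that the correct $C^0$-estimate (\ref{prepa1}) carries a factor $\lambda_G(K)$ and a supremum of $\eta$ only over the compact set $K^{-1}M$, not a global supremum, which is precisely what makes the noncompact case tractable. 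The paper's continuity argument avoids all of this: by Lemma~\ref{topframe} (which \emph{does} hold for closed support sets) the topology on $C^{r+s}_{KL}(G,F)$ is initial with respect to the iterated-derivative maps into $C^0_{KL}(G,F)$ with its compact-open topology, and by (\ref{hinein}) each such composite with $\beta$ equals $(\gamma,\eta)\mapsto(\cR_{w_j}\cdots\cR_{w_1}.\gamma)*_b(\cL_{v_i}\cdots\cL_{v_1}.\eta)$, which is continuous into $C^0_{KL}(G,F)$ by the continuity of the derivative maps (Lemma~\ref{topframe}) combined with Lemma~\ref{convcts}. Your estimate-based argument is correct precisely in the sub-case where both $K$ and $L$ are compact; to cover the stated generality it should be replaced by this compact-open/initial-topology argument.
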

If $G$ is compact, then $C^r(G,E)=C^r_K(G,E)$ with $K:=G$,
for each $r\in \N_0\cup\{\infty\}$ and locally convex space $E$.
Hence Proposition~\ref{diffbar} yields as a special case:
\begin{cor}\label{cpcase}
If $G$ is compact, then the convolution map
\[
\beta\colon C^r(G,E_1)\times C^s(G,E_2)\to C^t(G,F),\quad
(\gamma,\eta)\mto \gamma *_b \eta
\]
is continuous, for all $r,s,t\in \N_0\cup\{\infty\}$
such that $t\leq r+s$.\Punkt
\end{cor}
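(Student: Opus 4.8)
The plan is to deduce this directly from Proposition~\ref{diffbar}, the point being that compactness of $G$ removes all support constraints. First I would observe that since $G$ is compact, every $\gamma\in C^r(G,E)$ automatically satisfies $\Supp(\gamma)\sub G$, so that $C^r(G,E)=C^r_G(G,E)$ --- not merely as sets but as topological vector spaces, since $C^r_G(G,E)$ carries by definition the topology induced from $C^r(G,E)$, and here the two underlying sets coincide. This identification applies to each of $E_1$, $E_2$ and $F$, and for the respective orders $r$, $s$ and $r+s$.

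Next I would apply Proposition~\ref{diffbar} with the choice $K:=L:=G$. The hypothesis that $K$ or $L$ be compact holds because $G$ is compact, and $KL=GG=G$ since $G$ is a group. The proposition then furnishes, first, the inclusion $\gamma*_b\eta\in C^{r+s}_G(G,F)=C^{r+s}(G,F)$ for all $\gamma\in C^r(G,E_1)$ and $\eta\in C^s(G,E_2)$, and second, after the identifications of the previous paragraph, the continuity of the bilinear map
\[
\beta\colon C^r(G,E_1)\times C^s(G,E_2)\to C^{r+s}(G,F),\quad (\gamma,\eta)\mto\gamma*_b\eta\,.
\]

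It then remains to pass from the target $C^{r+s}(G,F)$ to $C^t(G,F)$ for $t\leq r+s$. For this I would use that the linear inclusion $C^{r+s}(G,F)\emb C^t(G,F)$ is continuous: the topology on $C^t(G,F)$ is the initial topology with respect to the maps $d^k$ for $k\leq t$, and these form a subfamily of the maps $d^k$ (with $k\leq r+s$) defining the topology of $C^{r+s}(G,F)$, so the former topology is coarser than the latter on the common domain. Composing this continuous inclusion with the continuous $\beta$ above yields the continuity of $\beta\colon C^r(G,E_1)\times C^s(G,E_2)\to C^t(G,F)$, as desired.

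There is no genuine obstacle here, the statement being a corollary whose substance lies entirely in the cited Proposition~\ref{diffbar}. The only mild points to verify --- the coincidence of $C^r(G,E)$ with $C^r_G(G,E)$ as topological vector spaces for compact $G$, the identity $GG=G$, and the continuity of the inclusion $C^{r+s}(G,F)\emb C^t(G,F)$ --- are all routine, the last of which may alternatively be read off from the defining seminorms via Lemma~\ref{enoughsn}.
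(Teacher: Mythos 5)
Your proof is correct and takes essentially the same route as the paper, which likewise observes that $C^r(G,E)=C^r_K(G,E)$ with $K:=G$ for compact $G$ and obtains the corollary as the special case $K=L=G$ of Proposition~\ref{diffbar}. The only difference is that you spell out the routine continuity of the inclusion $C^{r+s}(G,F)\emb C^t(G,F)$ for $t\leq r+s$, which the paper leaves implicit.
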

To each continuous bilinear map
$b\colon E_1\times E_2\to F$ as before, we associate a continuous bilinear
map 
$b^\vee \colon E_2\times E_1\to F$ via
\begin{equation}\label{bvee}
b^\vee(x,y):=b(y,x)\quad\mbox{for $(x,y)\in E_2\times E_1$.}
\end{equation}
\begin{la}\label{invo}
If one of the maps $\gamma\in C^r(G,E_1)$
and $C^s(G,E_2)$ has compact support, then
$(\gamma*_b\eta)^*=\eta^* *_{b^\vee}\gamma^*$.
\end{la}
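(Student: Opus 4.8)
The plan is to verify the asserted identity pointwise by unwinding all definitions and performing a single change of variables against the Haar measure. Both sides make sense: if, say, $\gamma$ has compact support, then so does $\gamma^*$ by Lemma~\ref{invoaut}, so $\eta^**_{b^\vee}\gamma^*$ is defined; and $\gamma*_b\eta$ has compact support by Lemma~\ref{convcts} (resp.\ Proposition~\ref{diffbar}), so $(\gamma*_b\eta)^*$ is defined as well.

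First I would fix $x\in G$ and expand the right-hand side. By the definition of convolution and of $b^\vee$ in (\ref{bvee}), we have $(\eta^**_{b^\vee}\gamma^*)(x)=\int_G b(\gamma^*(y^{-1}x),\eta^*(y))\,d\lambda_G(y)$, since $\eta^*(y)\in E_2$ and $\gamma^*(y^{-1}x)\in E_1$. Substituting the definition of the involution, namely $\eta^*(y)=\Delta_G(y^{-1})\eta(y^{-1})$ and $\gamma^*(y^{-1}x)=\Delta_G(x^{-1}y)\gamma(x^{-1}y)$ (using $(y^{-1}x)^{-1}=x^{-1}y$), and pulling the scalar factors out by bilinearity of $b$, the integrand becomes $\Delta_G(x^{-1}y)\Delta_G(y^{-1})\,b(\gamma(x^{-1}y),\eta(y^{-1}))$.

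The next step uses that $\Delta_G$ is a homomorphism, so $\Delta_G(x^{-1}y)\Delta_G(y^{-1})=\Delta_G(x^{-1})$ is independent of $y$ and can be taken outside the integral, leaving $\Delta_G(x^{-1})\int_G b(\gamma(x^{-1}y),\eta(y^{-1}))\,d\lambda_G(y)$. A change of variables $y=xz$, legitimate by left invariance of $\lambda_G$, turns $\gamma(x^{-1}y)$ into $\gamma(z)$ and $\eta(y^{-1})$ into $\eta(z^{-1}x^{-1})$, yielding $\Delta_G(x^{-1})\int_G b(\gamma(z),\eta(z^{-1}x^{-1}))\,d\lambda_G(z)=\Delta_G(x^{-1})(\gamma*_b\eta)(x^{-1})$, which is exactly $(\gamma*_b\eta)^*(x)$.

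The only point needing care is the justification of the change of variables in the vector-valued setting. As in the derivation of (\ref{altform}), I would handle this by composing with an arbitrary continuous linear functional $\Lambda\in F'$, reducing to scalar-valued integrals where left invariance of $\lambda_G$ applies in the usual form, and then invoke the fact that $F'$ separates points of $F$. Beyond this bookkeeping and the careful tracking of the modular factors, the argument is a routine identity, so I expect no genuine obstacle.
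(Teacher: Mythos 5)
Your proof is correct and is essentially the paper's own argument run in the opposite direction: the paper starts from $(\gamma*_b\eta)^*(x)$, expands it, and performs the substitution $z=xy$ using left invariance of $\lambda_G$, while you start from $(\eta^**_{b^\vee}\gamma^*)(x)$ and undo the same substitution ($y=xz$), with the same use of the multiplicativity of $\Delta_G$ and bilinearity of $b$. Your extra remark about justifying the vector-valued change of variables via functionals in $F'$ matches exactly the device the paper uses (in a footnote) when deriving (\ref{altform}).
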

\begin{la}\label{efftra}
Assume that one of the maps $\gamma\in C^r(G,E_1)$
and $C^s(G,E_2)$ has compact support. Let $g\in G$. Then
\begin{itemize}
\item[\rm(a)]
$\tau^L_g(\gamma*_b\eta)=(\tau^L_g(\gamma))*_b\eta$
\item[\rm(b)]
$\tau^R_g(\gamma*_b\eta)=\gamma*_b(\tau^R_g(\eta))$.
\end{itemize}
\end{la}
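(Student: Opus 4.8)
The plan is to verify both identities pointwise at each fixed $x\in G$, reducing the $F$-valued equalities to scalar ones by composing with continuous linear functionals $\Lambda\in F'$ (which separate points, as $F$ is Hausdorff), exactly as in the footnote preceding Lemma~\ref{convcts}. Throughout, the weak integral defining $\gamma*_b\eta$ exists by the discussion of this section, since one of $\gamma,\eta$ has compact support and hence the integrand $y\mapsto b(\gamma(y),\eta(y^{-1}x))$ is continuous with compact support. By Lemma~\ref{transla} the translates $\tau^L_g(\gamma)$ and $\tau^R_g(\eta)$ again lie in the relevant function spaces and a compact support is preserved under translation, so all convolutions on the right-hand sides are defined as well.

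For (b) I expect no difficulty, as it is a pure unwinding of definitions with no change of variables. Using $(\tau^R_g\eta)(w)=\eta(wg)$ with $w=y^{-1}x$, we obtain
\[
\tau^R_g(\gamma*_b\eta)(x)=(\gamma*_b\eta)(xg)=\int_G b(\gamma(y),\eta(y^{-1}xg))\,d\lambda_G(y),
\]
while
\[
\bigl(\gamma*_b(\tau^R_g\eta)\bigr)(x)=\int_G b\bigl(\gamma(y),(\tau^R_g\eta)(y^{-1}x)\bigr)\,d\lambda_G(y)=\int_G b(\gamma(y),\eta(y^{-1}xg))\,d\lambda_G(y).
\]
The two integrands coincide identically, so (b) holds at once.

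For (a) the essential ingredient is left-invariance of $\lambda_G$. Unwinding definitions,
\[
\tau^L_g(\gamma*_b\eta)(x)=(\gamma*_b\eta)(gx)=\int_G b(\gamma(y),\eta(y^{-1}gx))\,d\lambda_G(y),
\]
whereas, using $(\tau^L_g\gamma)(y)=\gamma(gy)$,
\[
\bigl((\tau^L_g\gamma)*_b\eta\bigr)(x)=\int_G b(\gamma(gy),\eta(y^{-1}x))\,d\lambda_G(y).
\]
In this last integral I apply the substitution $y\mapsto g^{-1}y$, which preserves $\lambda_G$ by left-invariance; then $\gamma(gy)$ becomes $\gamma(g\,g^{-1}y)=\gamma(y)$ in the first slot, while the second slot turns into $\eta((g^{-1}y)^{-1}x)=\eta(y^{-1}gx)$. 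This recovers precisely the expression for $\tau^L_g(\gamma*_b\eta)(x)$, proving (a).

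The only subtlety worth flagging is the justification of the change of variables at the level of the $F$-valued weak integral. This is handled exactly as elsewhere in the section: one applies an arbitrary $\Lambda\in F'$ to reduce to the real-valued integrand $\Lambda\circ b(\gamma(\,\cdot\,),\eta(\,\cdot\,))$, invokes left-invariance of $\lambda_G$ there, and concludes the vector-valued identity because $F'$ separates points. No completeness hypotheses beyond those already in force are needed, and the remaining manipulations are routine.
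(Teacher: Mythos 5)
Your proposal is correct and follows essentially the same route as the paper: part (b) is a pure unwinding of definitions, and part (a) uses exactly the substitution $z=g^{-1}y$ together with left invariance of $\lambda_G$ (you merely run the substitution in the opposite direction). Your extra remark on justifying the change of variables for the $F$-valued weak integral by applying functionals $\Lambda\in F'$ is precisely the mechanism the paper invokes (in the footnote preceding equation~(\ref{altform})), so nothing is missing.
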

\begin{la}\label{lasimpe}
Let $(G,r,s,b)$ be as in the introduction, $K\sub G$ be compact,
$\gamma\in C^r_K(G,E_1)$, $\eta\in C^s_c(G,E_2)$
and $q$, $p_1$, $p_2$ be continuous seminorms on $F$, $E_1$
and $E_2$, respectively, such that $q(b(x,y))\leq p_1(x)p_2(y)$
for all $(x,y)\in E_1\times E_2$. Let $k,\ell\in \N_0$
with $k\leq r$ and $\ell\leq s$. Then
\begin{eqnarray*}
\|\gamma*_b\eta\|^R_{k,q}\;\,& \leq &\|\gamma\|_{k,p_1}^R\|\eta\|_{p_2,\infty}\lambda_G(K)\\
\|\gamma*_b\eta\|^L_{\ell,q}\;\, & \leq & \|\gamma\|_{p_1,\infty}\|\eta\|_{\ell,p_2}^L\lambda_G(K)\\
\|\gamma*_b\eta\|^{R,L}_{k,\ell,q} & \leq & \|\gamma\|_{k,p_1}^R\|\eta\|_{\ell,p_2}^L\lambda_G(K).
\end{eqnarray*}
\end{la}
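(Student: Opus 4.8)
The plan is to establish all three bounds simultaneously by proving the mixed estimate and then reading off the other two as the special cases $\ell=0$ and $k=0$. The whole argument rests on the differentiation identity \eqref{hinein} of Proposition~\ref{diffbar}, which converts an iterated invariant derivative of a convolution into a convolution of derivatives, with right-invariant fields landing on the first factor and left-invariant fields on the second.

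By definition, $\|\gamma*_b\eta\|^{R,L}_{k,\ell,q}$ is the maximum of the quantities $\|\cR_{w_i}\cdots\cR_{w_1}\cL_{v_j}\cdots\cL_{v_1}.(\gamma*_b\eta)\|_{q,\infty}$ over $i\leq k$, $j\leq\ell$ and $w_1,\ldots,w_i,v_1,\ldots,v_j\in\cB$. Fixing such data, I would set $\psi:=\cR_{w_i}\cdots\cR_{w_1}.\gamma$ and $\phi:=\cL_{v_j}\cdots\cL_{v_1}.\eta$, so that by \eqref{hinein} the derivative in question equals $\psi*_b\phi$. Since invariant differential operators do not enlarge supports, $\Supp(\psi)\sub\Supp(\gamma)\sub K$, and hence \eqref{goodpar} gives $(\psi*_b\phi)(x)=\int_K b(\psi(y),\phi(y^{-1}x))\,d\lambda_G(y)$. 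I would then estimate pointwise, using the seminorm inequality $q\bigl(\int_K f\,d\lambda_G\bigr)\leq\int_K q(f)\,d\lambda_G$ for weak integrals (a standard consequence of the Hahn--Banach theorem, in the spirit of the functional-based computations already used around \eqref{altform}) together with the hypothesis $q(b(u,v))\leq p_1(u)p_2(v)$:
\[
q\bigl((\psi*_b\phi)(x)\bigr)\leq\int_K p_1(\psi(y))\,p_2(\phi(y^{-1}x))\,d\lambda_G(y)\leq\|\psi\|_{p_1,\infty}\,\|\phi\|_{p_2,\infty}\,\lambda_G(K).
\]

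Taking the supremum over $x\in G$ and observing that $\|\psi\|_{p_1,\infty}$ is one of the terms defining $\|\gamma\|^R_{k,p_1}$, while $\|\phi\|_{p_2,\infty}\leq\|\eta\|^L_{\ell,p_2}$, the maximum over all admissible $i,j,w,v$ yields the mixed estimate $\|\gamma*_b\eta\|^{R,L}_{k,\ell,q}\leq\|\gamma\|^R_{k,p_1}\|\eta\|^L_{\ell,p_2}\lambda_G(K)$. Specializing to $\ell=0$ (so that $\phi=\eta$ and $\|\eta\|^L_{0,p_2}=\|\eta\|_{p_2,\infty}$) gives the first bound, and specializing to $k=0$ (so that $\psi=\gamma$ and $\|\gamma\|^R_{0,p_1}=\|\gamma\|_{p_1,\infty}$) gives the second.

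Because Proposition~\ref{diffbar} already supplies differentiability of $\gamma*_b\eta$, the support bound \eqref{estsupp}, and the transfer identity \eqref{hinein}, essentially nothing remains but the one-line integral estimate, so I do not expect a genuine obstacle here. The only points demanding care are bookkeeping: matching the order of the factors in the definition of $\|.\|^{R,L}_{k,\ell,q}$ (left-invariant fields innermost, right-invariant outermost) with the order appearing on the left-hand side of \eqref{hinein}, and verifying that $\Supp(\psi)\sub K$ so that restricting the integral to $K$ is legitimate and produces exactly the factor $\lambda_G(K)$.
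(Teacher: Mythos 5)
Your proposal is correct and follows essentially the same route as the paper: the paper likewise applies the identity \eqref{hinein} to rewrite $\cR_{w_i}\cdots\cR_{w_1}\cL_{v_j}\cdots\cL_{v_1}.(\gamma*_b\eta)$ as $(\cR_{w_i}\cdots\cR_{w_1}.\gamma)*_b(\cL_{v_j}\cdots\cL_{v_1}.\eta)$, bounds its sup-seminorm by $\|\cdot\|_{p_1,\infty}\|\cdot\|_{p_2,\infty}\lambda_G(K)$ (citing the estimate \eqref{prepa1} from the proof of Lemma~\ref{convcts}, which is exactly the pointwise integral bound you re-derive via Hahn--Banach), and then passes to maxima over the admissible vector fields. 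The only cosmetic difference is that the paper proves all three inequalities at once by this passage to maxima, whereas you obtain the first two by specializing $\ell=0$ and $k=0$ in the mixed estimate, which amounts to the same thing.
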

For a definition and necessary background on
hypocontinuous bilinear maps, the reader is referred to Appendix~\ref{hypoapp}.
In Appendix~\ref{appproofs}, we also prove:
\begin{prop}\label{hypo}\label{bihyp}
For all $(G,r,s,t,b)$ as in the introduction,
the convolution map $\beta\colon C^r_c(G,E_1)\times C^s_c(G,E_2)\to
C^t_c(G,F)$, $(\gamma,\eta)\mto \gamma *_b\eta$ is hypocontinuous.
\end{prop}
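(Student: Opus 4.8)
The plan is to verify hypocontinuity directly from its definition --- separate continuity together with the two equicontinuity conditions over bounded sets --- by reducing everything to the fixed-support estimates already recorded in Lemma~\ref{lasimpe}. Since a singleton is bounded, the equicontinuity conditions applied to $\{\gamma\}$ and $\{\eta\}$ already give separate continuity, so it suffices to prove: for every bounded $B_1\sub C^r_c(G,E_1)$ the family $\{\eta\mapsto\gamma*_b\eta:\gamma\in B_1\}$ of maps $C^s_c(G,E_2)\to C^t_c(G,F)$ is equicontinuous, and symmetrically for every bounded $B_2\sub C^s_c(G,E_2)$ the family $\{\gamma\mapsto\gamma*_b\eta:\eta\in B_2\}$ is equicontinuous. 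As $G$ is the disjoint topological union of the cosets of an open $\sigma$-compact subgroup, it is paracompact; hence Lemma~\ref{towardsh}(c) applies and furnishes a compact $K\sub G$ with $B_1\sub C^r_K(G,E_1)$ (respectively a compact $L$ with $B_2\sub C^s_L(G,E_2)$), on which $B_1$ (resp.\ $B_2$) remains bounded.

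First I would carry out the reduction to fixed supports. Because $C^s_c(G,E_2)$ carries the locally convex direct limit topology of the spaces $C^s_L(G,E_2)$, a convex balanced set is a $0$-neighbourhood precisely when its trace on each $C^s_L(G,E_2)$ is one. Applying this to $\bigcap_{\gamma\in B_1}(\gamma*_b\,\cdot\,)^{-1}(W)$, for $W\sub C^t_c(G,F)$ a convex balanced $0$-neighbourhood, shows that equicontinuity of $\{\eta\mapsto\gamma*_b\eta:\gamma\in B_1\}$ is equivalent to equicontinuity of the restricted families $C^s_L(G,E_2)\to C^t_c(G,F)$ for every compact $L$. For $\gamma\in C^r_K$ and $\eta\in C^s_L$, Proposition~\ref{diffbar} (or Lemma~\ref{convcts} when $r=s=t=0$) gives $\gamma*_b\eta\in C^{r+s}_{KL}(G,F)\sub C^t_{KL}(G,F)$; and since $C^t_{KL}(G,F)$ carries the topology induced from $C^t(G,F)$, through which the continuous inclusion $C^t_c(G,F)\hookrightarrow C^t(G,F)$ factors, the inclusion $C^t_{KL}(G,F)\hookrightarrow C^t_c(G,F)$ is a topological embedding, so equicontinuity into $C^t_c(G,F)$ coincides with equicontinuity into $C^t_{KL}(G,F)$. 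Thus the whole task collapses to showing that, for each compact $L$, the family $\{C^s_L(G,E_2)\to C^t_{KL}(G,F),\ \eta\mapsto\gamma*_b\eta:\gamma\in B_1\}$ is equicontinuous, and the mirror statement for $B_2$.

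On these fixed-support pieces the estimates of Lemma~\ref{lasimpe} finish the argument once the target topology is presented by a convenient family of seminorms via Lemma~\ref{enoughsn}. Given a continuous seminorm $q$ on $F$, continuity of $b$ furnishes continuous seminorms $p_1$ on $E_1$ and $p_2$ on $E_2$ with $q(b(x,y))\le p_1(x)p_2(y)$ for all $x,y$. If $t<\infty$, I would fix a decomposition $t=k+\ell$ with $k\le r$ and $\ell\le s$ (possible since $t\le r+s$) and describe the topology of $C^t_{KL}(G,F)$ by the seminorms $\|.\|^{R,L}_{k,\ell,q}$; the third inequality of Lemma~\ref{lasimpe}, $\|\gamma*_b\eta\|^{R,L}_{k,\ell,q}\le\|\gamma\|^R_{k,p_1}\|\eta\|^L_{\ell,p_2}\lambda_G(K)$, bounds the target seminorm by the constant $\big(\sup_{\gamma\in B_1}\|\gamma\|^R_{k,p_1}\big)\lambda_G(K)$ (finite by boundedness of $B_1$) times the $C^s_L$-continuous seminorm $\|\eta\|^L_{\ell,p_2}$, which is exactly equicontinuity. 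If $t=\infty$, then $r+s=\infty$, so at least one of $r,s$ is infinite: when $s=\infty$ I would describe the topology of $C^t_{KL}$ by the left family $\|.\|^L_{\ell,q}$ ($\ell\in\N_0$) and invoke the second inequality $\|\gamma*_b\eta\|^L_{\ell,q}\le\|\gamma\|_{p_1,\infty}\|\eta\|^L_{\ell,p_2}\lambda_G(K)$ (the $\gamma$-factor is then the $C^0$-seminorm, bounded over $B_1$); when $r=\infty$ I would use the right family $\|.\|^R_{j,q}$ with the first inequality $\|\gamma*_b\eta\|^R_{j,q}\le\|\gamma\|^R_{j,p_1}\|\eta\|_{p_2,\infty}\lambda_G(K)$. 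The statement for bounded $B_2$ follows from the same three inequalities, now reading the $\eta$-factor as bounded over $B_2$ and the $\gamma$-factor as a continuous seminorm on $C^r_K$; alternatively it follows from the case just treated together with the involution $(\gamma*_b\eta)^*=\eta^* *_{b^\vee}\gamma^*$ of Lemma~\ref{invo} and the isomorphism property of $\Theta_c$.

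The main obstacle is bookkeeping rather than a single hard estimate: for each admissible $(r,s,t)$ one must match the generating family of seminorms on $C^t_{KL}(G,F)$ --- mixed when $t<\infty$, purely left or purely right when $t=\infty$ --- to the inequality of Lemma~\ref{lasimpe} whose ``bounded'' factor is carried by the variable ranging over the bounded set, so that, via the intertwining formula~(\ref{hinein}), the derivatives demanded by the target seminorm are always loaded onto the function that possesses enough smoothness. A secondary point requiring care is the passage through the direct limit topologies --- using paracompactness of $G$ to pin bounded sets to a single compact support, and using that $C^t_{KL}(G,F)\hookrightarrow C^t_c(G,F)$ is an embedding --- which is precisely what permits the reduction to the fixed-support estimates. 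The convention $\|.\|^L_{0,p}=\|.\|^R_{0,p}=\|.\|^{R,L}_{0,0,p}=\|.\|_{p,\infty}$ makes the locally compact case $r=s=t=0$ run through the same argument verbatim.
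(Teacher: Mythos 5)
Your proposal is correct, but it takes a genuinely different route from the paper's own proof. You verify the two equicontinuity conditions (H1), (H2) directly by hard estimates: paracompactness of $G$ (as a disjoint union of cosets of an open $\sigma$-compact subgroup) together with Lemma~\ref{towardsh}(c) pins each bounded set into a single $C^r_K(G,E_1)$, the characterization of $0$-neighbourhoods in a locally convex direct limit by convex balanced traces reduces equicontinuity to the fixed-support families $C^s_L(G,E_2)\to C^t_{KL}(G,F)$ (your factorization argument that $C^t_{KL}(G,F)\hookrightarrow C^t_c(G,F)$ is an embedding is sound, and in fact only continuity of this inclusion is needed for the direction you use), and then Lemma~\ref{lasimpe} together with the seminorm families of Lemma~\ref{enoughsn} -- mixed $\|.\|^{R,L}_{k,\ell,q}$ with $k+\ell=t$, $k\leq r$, $\ell\leq s$ when $t<\infty$; purely left or purely right families when $t=\infty$ -- gives the uniform bounds, with the derivative-loading bookkeeping you describe. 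The paper argues softly instead, in two steps: for $\sigma$-compact $G$ it reduces to Fr\'{e}chet-valued data via Lemma~\ref{towardsh}(b) and Lemma~\ref{basichypo}(c),(e), observes that $C^r_c(G,E_1)$ and $C^s_c(G,E_2)$ are then barrelled as locally convex direct limits of Fr\'{e}chet spaces, and invokes the classical theorem that a separately continuous bilinear map on a product of barrelled spaces is hypocontinuous, separate continuity being immediate from Lemmas~\ref{convcts} and~\ref{diffbar}; for general $G$ it decomposes the test function spaces as direct sums over cosets of an open $\sigma$-compact subgroup (Lemma~\ref{SJ}) and reduces to the $\sigma$-compact case by translations, using Lemma~\ref{basichypo}(a),(b),(d). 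What your approach buys: no barrelledness, no reduction to Fr\'{e}chet spaces, no coset decomposition of the function spaces (you only use the coset decomposition of $G$ to get paracompactness), a uniform treatment of $\sigma$-compact and non-$\sigma$-compact $G$, and explicit equicontinuity constants of the form $\lambda_G(K)\sup_{\gamma\in B_1}\|\gamma\|^R_{k,p_1}$. What the paper's approach buys: it avoids all seminorm case-matching by delegating the analysis to standard functional-analytic theorems and to the auxiliary Lemma~\ref{basichypo}, which it reuses elsewhere (e.g., in Proposition~\ref{nospp}).
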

\section{Facts on measures and their convolution}\label{secmeasset}
In this section, we fix our measure-theoretic setting
and state basic definitions and facts concerning spaces
of complex Radon measure and convolution of
complex Radon measures. As before, proofs
can be looked up in Appendix~\ref{appproofs}.
\begin{numba}\label{measet}
{\bf The setting.}
If~$X$ is a Hausdorff topological space,
we write~$\cB(X)$ for the $\sigma$-algebra of Borel sets
(which is generated by the set of open subsets of~$X$).
A positive measure $\mu\colon \cB(X)\to [0,\infty]$
is called a \emph{Borel measure} if $\mu(K)<\infty$
for each compact subset $K\sub X$.
Following~\cite{BCR}, we shall call a Borel measure $\mu$ on~$X$
a \emph{Radon measure} if~$\mu$ is inner regular in the sense that $\mu(A)=\sup\{ \mu(K)\colon
K\sub A\, \text{compact}\}$ for each $A\in \cB(X)$.
The support $\Supp(\mu)$ of a Radon measure is the
smallest closed subset
of~$X$ such that\linebreak
$\mu(X\setminus\Supp(\mu))=0$.
A complex measure $\mu\colon \cB(X)\to\C$ is called a \emph{complex Radon measure}
on~$X$ if the
associated total variation measure $|\mu|$ (as in \cite[6.2]{Rud})
is a (finite) positive Radon measure.
In this case, we set $\Supp(\mu):=\Supp(|\mu|)$.
The total
variation norm of $\mu$
is defined via $\|\mu\|:=|\mu|(X)$.
We let $\Mea(X)$ be the space of all complex Radon measures on~$X$.
Given a compact set $K\sub X$,
we let $\Mea_K(X)$ be the space of
all $\mu\in \Mea(X)$ such that $\Supp(\mu)\sub K$.
It is clear that the restriction map $(\Mea_K(X),\|.\|)\to (\Mea(K),\|.\|)$
is an
isometric
isomorphism, and hence $\Mea_K(X)\isom \Mea(K)\isom (C(K)',\|.\|_{\op })$
is a
Banach space (using the Riesz Representation Theorem, \cite[6.19]{Rud}).
We give $\Mea_c(X):=\bigcup_K \Mea_K(X)$ the locally convex direct
limit topology, and note that it is Hausdorff (being finer than the normable topology arising from
the total variation norm). We let $\Mea(X)_+$ be the set of finite positive Radon measures
on~$X$, $\Mea_K(X)_+$ be the subset of Radon measures
supported in a given compact set $K\sub X$, and $\Mea_c(X)_+:=\bigcup_K\, \Mea_K(X)_+$.
If $G$ is a Hausdorff topological group, with group multiplication
$m\colon G\times G\to G$, we let $\mu\otimes \nu$ be the Radon product measure
of $\mu,\nu\in \Mea(G)_+$ (see \cite[2.1.11]{BCR}).
We define $\mu\otimes \nu$ for $\mu,\nu\in \Mea(G)$
via bilinear extension; then $|\mu\otimes\nu|\leq |\mu|\otimes|\nu|$
(see, e.g.,  \cite[(5.4)]{GLP}).
The convolution of $\mu,\nu\in \Mea(G)$ is defined as the measure
$\mu*\nu:=m_*(\mu\otimes\nu)$
taking $A\in \cB(G)$ to $(\mu\otimes \nu)(m^{-1}(A))$
(cf.\ \cite[2.1.16]{BCR}).
Since $|\mu*\nu|=|m_*(\mu\otimes\nu)|\leq m_*(|\mu\otimes \nu|)
\leq m_*(|\mu|\otimes|\nu|)=
|\mu|*|\nu|$, one deduces that
$\|\mu*\nu\|\leq (|\mu|*|\nu|)(G)=(|\mu|\otimes |\nu|)(G\times G)=|\mu|(G)|\nu|(G)=\|\mu\|\,\|\nu\|$.
We shall use that\footnote{If $z\in G\setminus\Supp(\mu)\Supp(\nu)=:U$ and $x,y\in G$ with $xy=z$,
then $x\not\in \Supp(\mu)$ or $y\not\in \Supp(\nu)$.
Hence $m^{-1}(U)\cap(\Supp(\mu)\times\Supp(\nu))=\emptyset$
and $|\mu*\nu|(U)\leq (|\mu|*|\nu|)(U)=(|\mu|\otimes |\nu|)(m^{-1}(U))
\leq (|\mu|\otimes|\nu|)(G\times (G\setminus \Supp(\nu)))+(|\mu|\otimes |\nu|)((G\setminus\Supp(\mu))\times G)=0$.}
\begin{equation}\label{asexpected}
\Supp(\mu*\nu)\sub \Supp(\mu)\Supp(\nu)\qquad\mbox{for all $\,\mu,\nu\in \Mea_c(G)$.}\vspace{2mm}
\end{equation}
\end{numba}
\begin{la}\label{summeas}
Let $X$ be a Hausdorff topological space and $(A_j)_{j\in J}$ be a family of Borel
subsets of~$X$, such that $J_K:=\{j\in J\colon A_j\cap K\not=\emptyset\}$
is finite for each compact subset $K\sub X$.
Then the map
\[
\Phi\colon \Mea_c(X)\to\bigoplus_{j\in J }\Mea(A_j)\,, \quad \mu\mto (\mu|_{\cB(A_j)})_{j\in J}
\]
is continuous and linear.
\end{la}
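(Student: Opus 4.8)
The statement to prove is Lemma~\ref{summeas}: given a locally finite family $(A_j)_{j\in J}$ of Borel sets (locally finite in the sense that each compact $K$ meets only finitely many), the restriction map $\Phi\colon \Mea_c(X)\to\bigoplus_{j\in J}\Mea(A_j)$, $\mu\mapsto(\mu|_{\cB(A_j)})_{j\in J}$, is continuous and linear. Linearity is immediate since restriction of measures is linear. The content is continuity, and because $\Mea_c(X)=\dl\,\Mea_K(X)$ carries the locally convex direct limit topology, continuity of $\Phi$ is equivalent to continuity of each restriction $\Phi|_{\Mea_K(X)}\colon \Mea_K(X)\to\bigoplus_{j\in J}\Mea(A_j)$ for every compact $K\sub X$.

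**The key simplification from local finiteness.** Fix a compact $K\sub X$. The plan is to exploit that $J_K:=\{j\in J\colon A_j\cap K\neq\emptyset\}$ is finite. For $\mu\in\Mea_K(X)$ we have $\Supp(\mu)\sub K$, so $|\mu|(A_j)=|\mu|(A_j\cap K)=0$ whenever $j\notin J_K$; hence $\mu|_{\cB(A_j)}=0$ for all $j\notin J_K$. Therefore $\Phi|_{\Mea_K(X)}$ actually lands in the finite subsum $\bigoplus_{j\in J_K}\Mea(A_j)$, which is a topological subspace of the full direct sum (the inclusion of a finite subsum into the direct sum is a topological embedding onto a complemented subspace). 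Since $J_K$ is finite, the subsum topology is just the product topology on finitely many Banach-space factors, and the norm $\|(\nu_j)_{j\in J_K}\|:=\max_{j\in J_K}\|\nu_j\|$ defines it (cf.\ the countable case in Remark~\ref{semisum}).

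**The norm estimate.** It now suffices to bound each component. For $\mu\in\Mea_K(X)$ and any $j$, the restriction $\mu|_{\cB(A_j)}$ has total variation $\||\mu|_{\cB(A_j)}\|=|\mu|(A_j)\leq|\mu|(X)=\|\mu\|$, using that $|\mu|$ is a positive measure and $A_j\sub X$. Consequently
\[
\max_{j\in J_K}\big\|\mu|_{\cB(A_j)}\big\|\;\leq\;\|\mu\|,
\]
so $\Phi|_{\Mea_K(X)}$ is norm-decreasing from $(\Mea_K(X),\|.\|)$ into the finite subsum, hence continuous. Since this holds for every compact $K$, the universal property of the locally convex direct limit gives continuity of $\Phi$ on all of $\Mea_c(X)$.

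**Anticipated obstacle.** No step involves a hard estimate; the crux is purely a matter of organizing the topology correctly. The one point that requires a little care is the claim that a finite subsum embeds topologically into the full direct sum $\bigoplus_{j\in J}\Mea(A_j)$, so that continuity into the subsum implies continuity into the whole sum. This is standard for locally convex direct sums—the coordinate projections onto the subsum are continuous and split the inclusion—but one should verify it rather than assume it, especially since $J$ may be uncountable and only the countable case of the box-neighbourhood description in Remark~\ref{semisum} is recorded. Invoking instead the general description of direct-sum seminorms as $q((x_j)_j)=\sum_j q_j(x_j)$ from Remark~\ref{semisum} sidesteps the embedding issue entirely: for such a seminorm $q$ one estimates $q(\Phi(\mu))=\sum_{j\in J_K}q_j(\mu|_{\cB(A_j)})\leq\big(\sum_{j\in J_K}\|q_j\|_{\op}\big)\|\mu\|$, a finite sum since $J_K$ is finite, giving continuity directly.
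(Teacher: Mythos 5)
Your proposal is correct and follows essentially the same route as the paper's proof: reduce to the restrictions $\Phi|_{\Mea_K(X)}$ via the direct limit property, observe that local finiteness forces the image into the finite subsum $\bigoplus_{j\in J_K}\Mea(A_j)$, and bound each component by the total variation estimate $\|\mu|_{\cB(A_j)}\|=|\mu|(A_j)\leq\|\mu\|$. Your closing remark on the finite-subsum embedding (and the seminorm-based alternative that avoids it) is a sound extra precaution, but it does not change the substance of the argument.
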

Recall the notation $\rho\odot\mu$ for $\rho\,d\mu$.
\begin{la}\label{m-emb}
Let $X$ be a Hausdorff topological space
which is hemicompact,
and $K_1\sub K_2\sub\cdots$ be compact subsets of~$X$ such that
each compact subset of~$X$ is contained in some $K_n$.
Let $K_0:=\emptyset$.
Given $\mu\in \Mea_c(X)$, we have $\mu_n:=\one_{K_n\setminus K_{n-1}}\odot \mu\in \Mea_{K_n}(X)$
for each $n\in\N$, and the map
\[
\Phi\colon \Mea_c(X)\to\bigoplus_{n\in\N}\Mea_{K_n}(X)\,, \quad \mu\mto (\mu_n)_{n\in\N}
\]
is linear and a topological embedding.
\end{la}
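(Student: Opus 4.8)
The plan is to verify well-definedness, establish continuity of $\Phi$ using the direct limit structure of $\Mea_c(X)$, and then obtain the embedding property cheaply by producing an explicit continuous linear left inverse, so that no delicate analysis of the direct limit topology is needed.

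First I would check that each $\mu_n$ really lies in $\Mea_{K_n}(X)$. Writing $A_n:=K_n\setminus K_{n-1}$, these sets are Borel, pairwise disjoint, and satisfy $\bigcup_n A_n=\bigcup_n K_n=X$. The total variation of $\one_{A_n}\odot\mu$ is $\one_{A_n}\odot|\mu|$, and the restriction of the finite positive Radon measure $|\mu|$ to a Borel set is again inner regular: given a Borel set $B$, inner regularity of $|\mu|$ yields a compact set inside $B\cap A_n$, which then also lies in $B$. Hence $\mu_n$ is a complex Radon measure. Since $A_n\sub K_n$ and $K_n$ is closed (being compact in a Hausdorff space), we get $|\mu_n|(X\setminus K_n)=0$, so $\Supp(\mu_n)\sub K_n$, i.e. $\mu_n\in\Mea_{K_n}(X)$. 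Moreover, if $\Supp(\mu)\sub K_N$ then $A_n$ is disjoint from $\Supp(\mu)$ for $n>N$, whence $\mu_n=0$; thus $\Phi(\mu)$ has only finitely many nonzero entries and genuinely lies in the direct sum. Linearity of $\Phi$ is immediate.

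For continuity, I would use that $X$ is hemicompact, so the $K_n$ are cofinal among the compact subsets and $\Mea_c(X)$ is the locally convex direct limit of the Banach spaces $\Mea_{K_n}(X)$. By the universal property, a linear map out of $\Mea_c(X)$ is continuous as soon as its restriction to each $\Mea_{K_N}(X)$ is continuous. Fixing $N$, the restriction $\Phi|_{\Mea_{K_N}(X)}$ takes values in the finite partial sum $\bigoplus_{n\le N}\Mea_{K_n}(X)$, on which the direct sum and product topologies coincide; it therefore suffices that each coordinate $\mu\mapsto\one_{A_n}\odot\mu$ be continuous on $\Mea_{K_N}(X)$, which is clear from the contraction estimate $\|\one_{A_n}\odot\mu\|=|\mu|(A_n)\le\|\mu\|$. (Alternatively, continuity of $\Phi$ follows from Lemma~\ref{summeas} applied to the family $(A_n)_n$, composed with the isometric extension-by-zero embeddings $\Mea(A_n)\to\Mea_{K_n}(X)$.)

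The crux is the embedding property, which I would handle by exhibiting a continuous retraction. Define $\Sigma\colon\bigoplus_n\Mea_{K_n}(X)\to\Mea_c(X)$, $(\nu_n)_n\mapsto\sum_n\nu_n$ (a finite sum landing in $\Mea_{K_N}(X)$ for the largest index $N$ present). By the universal property of the locally convex direct sum, $\Sigma$ is continuous, since its composite with each canonical inclusion $\Mea_{K_n}(X)\to\bigoplus_m\Mea_{K_m}(X)$ is merely the inclusion $\Mea_{K_n}(X)\hookrightarrow\Mea_c(X)$, continuous by definition of the direct limit topology. Because the $A_n$ partition $X$, we have $\Sigma(\Phi(\mu))=\sum_n\one_{A_n}\odot\mu=\mu$, i.e. $\Sigma\circ\Phi=\id$. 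Combining these facts, $\Phi$ is continuous, $\Sigma\circ\Phi=\id$ forces $\Phi$ to be injective, and $\Phi^{-1}=\Sigma|_{\im\Phi}$ is a restriction of the continuous map $\Sigma$, hence continuous; so $\Phi$ is a homeomorphism onto its image. The only genuinely measure-theoretic point is the inner regularity of the restricted measures $\mu_n$, and I expect that the main simplification is recognising the retraction $\Sigma$, which turns the embedding statement into something essentially formal rather than requiring one to wrestle with the fine structure of the direct limit.
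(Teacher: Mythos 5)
Your proposal is correct and follows essentially the same route as the paper's proof: continuity via the direct limit property together with the contraction estimate $\|\one_{K_n\setminus K_{n-1}}\odot\mu\|\leq\|\mu\|$, and the embedding property via the continuous summation map as an explicit left inverse. The only addition is your explicit verification that each $\mu_n$ is a Radon measure supported in $K_n$, which the paper takes for granted; otherwise the two arguments coincide.
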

If $X$ is a locally compact space, $\mu\geq 0$ a Radon measure on~$X$
and $K\sub X$ a compact set, we define $(L^1(X,\mu),\|.\|_{L^1})$
as usual and let $L^1_K(X,\mu)$
be the set of all $[\gamma]\in L^1(X,\mu)$
vanishing $\mu$-almost everywhere outside~$K$.
We equip $L^1_K(X,\mu)$ with the topology induced by $L^1(X,\mu)$,
and $L^1_c(X,\mu):=\bigcup_K\, L^1_K(X,\mu)$
with the locally convex direct limit topology.
We abbreviate
$L^1_c(G):=L^1_c(G,\lambda_G)$.
\begin{la}\label{intomas}
For each locally compact space $X$ and Radon measure
$\mu\geq 0$ on $X$, the map
\[
\Phi\colon C_c(X)\to \Mea_c(X),\quad \gamma\mto \gamma\odot \mu
\]
is continuous and linear, and so is
$\Psi\colon L^1_c(X,\mu) \to \Mea_c(X)$, $\gamma\mto \gamma\odot \mu$.
\end{la}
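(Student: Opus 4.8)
\emph{Proof strategy.} Linearity of both $\Phi$ and $\Psi$ is immediate from the linearity of the assignment $\gamma\mapsto\gamma\,d\mu$, so the entire content lies in continuity. The plan is to exploit the universal property of the locally convex direct limit topologies on $C_c(X)=\bigcup_K C_K(X)$, $L^1_c(X,\mu)=\bigcup_K L^1_K(X,\mu)$ and $\Mea_c(X)=\bigcup_K\Mea_K(X)$: a linear map out of such a direct limit is continuous as soon as each of its restrictions to the steps $C_K(X)$ (resp.\ $L^1_K(X,\mu)$) is continuous. I therefore fix a compact set $K\sub X$ and treat the restrictions. First I would check that these restrictions land in the single Banach space $\Mea_K(X)$ (Banach by~\ref{measet}): if $\gamma\in C_K(X)$, then $|\gamma\odot\mu|(X\setminus K)=\int_{X\setminus K}|\gamma|\,d\mu=0$ since $\gamma$ vanishes off~$K$, and if $\gamma\in L^1_K(X,\mu)$ the same holds because $\gamma=0$ holds $\mu$-almost everywhere off~$K$; as $X\setminus K$ is open, this yields $\Supp(\gamma\odot\mu)\sub K$ in both cases.

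Once the target is reduced to the normed space $(\Mea_K(X),\|.\|)$, continuity becomes a matter of a single norm estimate, using that the total variation of a density measure is $|\gamma\odot\mu|=|\gamma|\odot\mu$, whence $\|\gamma\odot\mu\|=\int_X|\gamma|\,d\mu$. For $\Psi$ this reads $\|\gamma\odot\mu\|=\|\gamma\|_{L^1}$, so the restriction $L^1_K(X,\mu)\to\Mea_K(X)$ is even isometric, hence continuous. For $\Phi$ I would bound $\int_K|\gamma|\,d\mu\leq\mu(K)\,\|\gamma\|_K$, which is finite because $\mu(K)<\infty$ ($\mu$ being a Radon, hence locally finite, measure); this exhibits $C_K(X)\to\Mea_K(X)$ as a bounded linear map between normed spaces, thus continuous. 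Composing with the continuous inclusion $\Mea_K(X)\emb\Mea_c(X)$ and appealing to the universal property finishes both claims.

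The step requiring the most care, and the one I would single out as the main obstacle, is verifying that $\gamma\odot\mu$ genuinely lies in $\Mea(X)$, i.e.\ that $|\gamma|\odot\mu$ is inner regular, in the $L^1$ case (for $\gamma\in C_c(X)$ this is routine since $|\gamma|$ is bounded). Here I would use that $\{\gamma\neq 0\}$ is $\sigma$-finite for~$\mu$ (by Chebyshev, $\mu(\{|\gamma|>1/n\})\leq n\|\gamma\|_{L^1}<\infty$) and then truncate: given a Borel set $A$ and $\varepsilon>0$, pass to $B:=A\cap\{1/N\leq|\gamma|\leq N\}$ with $N$ so large that $\int_{A\setminus B}|\gamma|\,d\mu<\varepsilon/2$ (absolute continuity of the integral), observe $\mu(B)\leq N\|\gamma\|_{L^1}<\infty$, invoke inner regularity of~$\mu$ to find a compact $K'\sub B$ with $\mu(B\setminus K')<\varepsilon/(2N)$, and conclude $(|\gamma|\odot\mu)(A\setminus K')\leq\int_{A\setminus B}|\gamma|\,d\mu+N\,\mu(B\setminus K')<\varepsilon$. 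This establishes $\gamma\odot\mu\in\Mea_K(X)$ and legitimises the norm computations above.
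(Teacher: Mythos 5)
Your proof is correct and follows essentially the same route as the paper's: reduce via the locally convex direct limit property to the restrictions on $C_K(X)$ and $L^1_K(X,\mu)$, then bound the total variation norm via $\|\gamma\odot\mu\|=\|\gamma\|_{L^1}\leq \mu(K)\,\|\gamma\|_\infty$ (resp.\ $\|\gamma\odot\mu\|\leq\|\gamma\|_{L^1}$). The only difference is that you additionally verify the support containment $\Supp(\gamma\odot\mu)\sub K$ and the inner regularity of $\gamma\odot\mu$ in the $L^1$ case --- details the paper's proof leaves implicit --- and these verifications are correct.
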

As is well-known, the definitions of convolution of functions and that of
measures are compatible with one another (cf.\ \cite[p.\,50]{Fol}):
\begin{la}\label{fuvsm}
If $G$ is a locally compact group, with left Haar measure $\lambda_G$,
then $(\gamma\odot \lambda_G)*(\eta\odot \lambda_G)=(\gamma*\eta)\odot\lambda_G$
for all $\gamma,\eta\in C_c(G)$
$($and, more generally, for $\gamma,\eta\in L^1_c(G))$.\,\Punkt
\end{la}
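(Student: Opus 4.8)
The plan is to verify the claimed identity of measures by testing both sides against continuous functions of compact support. A complex Radon measure on~$G$ is determined by its integrals against the elements of $C_c(G)$ (Riesz representation, as used in~\ref{measet}), and both $(\gamma\odot\lambda_G)*(\eta\odot\lambda_G)$ and $(\gamma*\eta)\odot\lambda_G$ are compactly supported Radon measures (the former by~(\ref{asexpected}), the latter because $\gamma*\eta\in C_c(G)$ by Lemma~\ref{convcts}). Hence it suffices to show that $\int_G f\,d\big((\gamma\odot\lambda_G)*(\eta\odot\lambda_G)\big)=\int_G f\cdot(\gamma*\eta)\,d\lambda_G$ for every $f\in C_c(G)$.

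First I would unwind the left-hand side using the definition $\mu*\nu=m_*(\mu\otimes\nu)$ together with the transformation formula for push-forward measures, obtaining $\int_{G\times G}f(xy)\,d\big((\gamma\odot\lambda_G)\otimes(\eta\odot\lambda_G)\big)(x,y)$. Next I would identify the Radon product $(\gamma\odot\lambda_G)\otimes(\eta\odot\lambda_G)$ with the measure of density $(x,y)\mapsto\gamma(x)\eta(y)$ with respect to $\lambda_G\otimes\lambda_G$; for positive measures this is the standard description of the Radon product of measures possessing densities, and the general case follows by bilinear extension. The integral then becomes $\int_G\int_G f(xy)\gamma(x)\eta(y)\,d\lambda_G(y)\,d\lambda_G(x)$ by Fubini's theorem, which applies because $f,\gamma,\eta$ are bounded with compact support. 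In the inner integral I would substitute $z=xy$ and invoke the left-invariance of $\lambda_G$ (applied with $x$ held fixed, carrying $\eta$ along) to rewrite it as $\int_G f(z)\eta(x^{-1}z)\,d\lambda_G(z)$; a second application of Fubini to interchange the order of integration then gives $\int_G f(z)\big(\int_G\gamma(x)\eta(x^{-1}z)\,d\lambda_G(x)\big)\,d\lambda_G(z)=\int_G f(z)(\gamma*\eta)(z)\,d\lambda_G(z)$, which is exactly what was required.

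The genuinely delicate points are bookkeeping ones: correctly identifying the density of the Radon product, so that Fubini may legitimately be run against $\lambda_G\otimes\lambda_G$, and performing the change of variables with the left-invariance applied in the right direction. The main obstacle is the passage to the $L^1$ case: for $\gamma,\eta\in L^1_c(G)$ the pointwise manipulations must be read $\lambda_G$-almost everywhere, and one must know in advance that $\gamma\odot\lambda_G,\eta\odot\lambda_G$ indeed lie in $\Mea_c(G)$ (via Lemma~\ref{intomas}) and that $\gamma*\eta$ is again a well-defined element of $L^1_c(G)$. Granting these standard facts about convolution of $L^1$-functions on a locally compact group, the same Fubini-and-invariance computation carries over verbatim, the integrability now coming from $\gamma,\eta\in L^1$ together with $f\in C_c(G)$.
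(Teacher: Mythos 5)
Your argument is correct. The paper in fact offers no proof of this lemma at all --- it is stated as well known, with the end-of-proof mark built into the statement and a citation to \cite[p.\,50]{Fol} --- and the computation you give (testing both measures against $f\in C_c(G)$ via the Riesz-representation identification $\Mea_K(G)\isom C(K)'$, unwinding $\mu*\nu=m_*(\mu\otimes\nu)$, identifying the Radon product with the density $(x,y)\mto\gamma(x)\eta(y)$ relative to $\lambda_G\otimes\lambda_G$, then Fubini and left invariance under the substitution $z=xy$) is precisely the standard argument that the citation refers to, including the correct handling of the $L^1_c$ case via almost-everywhere versions of the same manipulations; so your proposal correctly supplies the proof the paper takes on faith.
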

\section{Convolution on non-{\boldmath$\sigma$}-compact groups}\label{aness}
We prove that convolution of
test functions on a non-$\sigma$-compact group
is always discontinuous (Proposition~\ref{disco}).
Notably, this shows the necessity of condition~(a)
in Theorem~A.
It is efficient to discuss convolution
of measures in parallel,
and all our results concerning it.
\cite[Lemma]{Bis} is essential:
\begin{la}\label{bisg}
Let $I$ be an uncountable set.
Then there exists a function\linebreak
$g\colon I\times I\to \; ]0,\infty[$
such that for each $v\colon I \to \; ]0,\infty[$,
there exist $i,j\in I$ such that $v(i)v(j)>g(i,j)$.\Punkt
\end{la}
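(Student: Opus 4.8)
**The plan is to construct the function $g$ explicitly from a single injection of $\N$-indexed data into the uncountable set $I$**, exploiting the fact that any $v\colon I\to\;]0,\infty[$ must, on an uncountable set, fail to be bounded below away from zero in a controllable way. The key observation is that for an uncountable index set, the values $v(i)$ cannot all be large: for each $n\in\N$ the set $\{i\in I\colon v(i)>1/n\}$ cannot exhaust $I$ for every $n$ simultaneously in a ``uniform'' way, so there is an uncountable subset on which $v$ is bounded. More precisely, since $I=\bigcup_{n\in\N}\{i\colon v(i)\le n\}$ is a countable union, at least one piece $I_n:=\{i\colon v(i)\le n\}$ is uncountable (in fact infinite suffices for what follows).

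First I would reduce to a countable combinatorial core. Fix any countably infinite subset $\{i_1,i_2,\ldots\}\subseteq I$ of distinct elements, and set $g$ to be very large along the ``diagonal-ish'' pairs among these while setting $g$ to any fixed positive constant, say $g(i,j):=1$, off this countable part. The real content is to choose the values $g(i_m,i_n)$ growing fast enough that no positive function $v$ can dominate. Concretely, I would arrange an enumeration so that for the diagonal pairs one forces a contradiction with boundedness: given the uncountable (hence infinite) set $I_N$ on which $v\le N$, pick two indices $i,j\in I_N$ for which $g(i,j)$ was defined to exceed $N^2$; then $v(i)v(j)\le N\cdot N=N^2<g(i,j)$ would be the wrong direction, so I must instead build $g$ to be \emph{small} on a rich family of pairs, forcing $v(i)v(j)>g(i,j)$ to hold somewhere.

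Let me correct the direction: the conclusion needs $v(i)v(j)>g(i,j)$ for \emph{some} pair, so $g$ should be chosen \emph{small} on a sufficiently abundant collection of pairs that $v$, being strictly positive everywhere, must overshoot one of them. The clean approach is: on the countable subset $\{i_n\}$ define $g(i_m,i_n):=a_m a_n$ for a fixed positive sequence $(a_n)$ with $a_n\to 0$ (say $a_n:=1/n$), and define $g$ arbitrarily positive elsewhere. Given $v$, restrict attention to $w_n:=v(i_n)>0$; I must find $m,n$ with $w_m w_n>a_m a_n$, i.e.\ $(w_m/a_m)(w_n/a_n)>1$. Writing $u_n:=w_n/a_n>0$, it suffices to find $m,n$ with $u_m u_n>1$, and taking $m=n$ this just needs $u_n>1$ for some $n$, i.e.\ $v(i_n)>a_n=1/n$ for some $n$ — which must hold since otherwise $v(i_n)\le 1/n\to 0$ while $v>0$ places no lower bound, so I would instead choose $a_n\to 0$ fast enough, e.g.\ $a_n:=1/n$, and note $v(i_n)\le 1/n$ for all $n$ is perfectly possible, so a diagonal argument alone fails and the \textbf{uncountability of $I$} is genuinely needed.

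\textbf{The main obstacle}, and the reason \cite{Bis} is cited rather than this being trivial, is exactly that a countable subset does not suffice: one must use the full uncountability of $I$ to defeat \emph{every} $v$ at once. The correct structure is a transfinite or cardinality argument: enumerate $I$ and, for each function $v$ one might worry about, the values $v(i)$ over uncountably many $i$ must have an accumulation structure (e.g.\ some interval $[1/(n+1),1/n]$ or $[n,n+1]$ contains $v(i)$ for uncountably many $i$, by pigeonhole on the countable cover of $]0,\infty[$). I would set $g(i,j):=c_i c_j$ where $(c_i)_{i\in I}$ is \emph{itself} a positive function on $I$ chosen so that $\{i\colon c_i> t\}$ is countable for every $t>0$ (possible since $I$ is uncountable: well-order $I$ in type the first uncountable ordinal and let $c_i$ decrease). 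Then for any $v$, pick any $t$ with $\{i\colon v(i)>t\}$ uncountable (exists by pigeonhole), intersect with the countable set $\{i\colon c_i\ge t\}^{c}$ to get some $i$ with $v(i)>t>c_i$, and similarly a $j$; product-estimating $v(i)v(j)>c_ic_j=g(i,j)$ closes the argument. Verifying the pigeonhole step and the existence of the ``rapidly decreasing'' family $(c_i)$ indexed by an uncountable set — which is where the cardinality hypothesis is consumed — is the step I expect to require the most care.
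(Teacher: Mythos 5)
Your final construction cannot work, for two separate reasons, and the second one kills every construction of this shape. First, the family $(c_i)_{i\in I}$ you need --- strictly positive numbers with $\{i\in I\colon c_i>t\}$ countable for every $t>0$ --- does not exist: since every $c_i$ is strictly positive, $I=\bigcup_{n\in\N}\{i\in I\colon c_i>1/n\}$, so if each of these sets were countable, $I$ would be a countable union of countable sets, hence countable. This is exactly the pigeonhole argument you apply to $v$; applied to $c$ it shows that your hypothesis on $(c_i)$ is self-contradictory, so the step you flagged as ``requiring the most care'' is in fact impossible. Second, and more fundamentally, no $g$ of product form $g(i,j)=c_ic_j$ with $c\colon I\to\;]0,\infty[$ can ever satisfy the lemma, regardless of how $c$ is chosen: taking $v:=c$ gives $v(i)v(j)=c_ic_j=g(i,j)$ for all $i,j$, so the required strict inequality $v(i)v(j)>g(i,j)$ fails for every pair. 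The lemma forces $g$ to be genuinely two-dimensional; any $g$ that splits as a product of one-variable functions is defeated by the function $v$ equal to the factor itself.

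For comparison, here is what the cited argument of Bisgaard does (the paper itself does not reprove the lemma; it quotes \cite{Bis} and marks the statement with a box). One may assume $I=\omega_1$: handle a subset $J\subseteq I$ of cardinality $\aleph_1$ and set $g:=1$ off $J\times J$, since any $v$ restricted to $J$ already produces the required pair. For each $\beta<\omega_1$ choose an injection $h_\beta\colon\{\alpha\colon \alpha\leq\beta\}\to\N$, which exists because initial segments of $\omega_1$ are countable, and set $g(\alpha,\beta):=1/h_\beta(\alpha)$ for $\alpha\leq\beta$ and $g(\alpha,\beta):=1$ otherwise. If $A\subseteq\omega_1$ is uncountable and $N\in\N$, then some $\beta\in A$ has at least $N$ predecessors in $A$, and since $h_\beta$ is injective, one of them, say $\alpha$, satisfies $h_\beta(\alpha)\geq N$, i.e.\ $g(\alpha,\beta)\leq 1/N$. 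Thus $g$ takes arbitrarily small values on $A\times A$ for \emph{every} uncountable $A$ --- something no product-form $g$ can do. Now given $v$, your pigeonhole step (which is correct) yields $n\in\N$ and an uncountable set $A$ with $v>1/n$ on $A$; choosing $\alpha,\beta\in A$ with $g(\alpha,\beta)<1/n^2$ gives $v(\alpha)v(\beta)>1/n^2>g(\alpha,\beta)$. Note where uncountability is consumed: not in a ``rapidly decreasing'' family indexed by $I$ (impossible, as above), but in the fact that each element of $\omega_1$ has only countably many predecessors, while every uncountable subset contains elements with arbitrarily many predecessors inside that subset.
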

\begin{la}\label{technmeas}
Let $G$ be a Hausdorff topological group and
$W\sub \Mea_c(G)$ be a cone $($i.e., $[0,\infty[\, W\sub W)$,
equipped with a topology $\cO$ making the map $m_\mu\colon [0,\infty[\to W$, $r\mto r\mu$ continuous at~$0$
for each $\mu\in W$.
Assume that there exists an uncountable set~$I$ and families $(Y_i)_{i\in I}$
and $(Z_i)_{i\in I}$ of Borel sets in~$G$ such that
\[
I_K:=\{(i,j)\in I\times I\colon Y_iZ_j\cap K\not=\emptyset\}
\]
is finite for each compact subset $K\sub G$,
and there exist non-zero measures $\mu_i,\nu_i\in \Mea_c(G)_+\cap W$
such that $\Supp(\mu_i)\sub Y_i$ and $\Supp(\nu_i)\sub Z_i$,
for all $i\in I$.
Then the convolution map
\[
\beta\colon (W,\cO)\times (W,\cO) \to \Mea_c(G),\quad (\mu,\nu)\mto \mu *\nu
\]
is discontinuous $($with respect to the usual locally convex direct limit topology on the right hand side$)$.
\end{la}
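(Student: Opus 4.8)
The plan is to prove that $\beta$ fails to be continuous at the point $(0,0)\in W\times W$, which already yields the claim: since $W$ is a cone we have $0=0\cdot\mu_1\in W$ and $\beta(0,0)=0*0=0$, so a globally continuous $\beta$ would in particular be continuous there. The whole argument is driven by the Bishop-type Lemma~\ref{bisg}: the idea is to manufacture a single $0$-neighbourhood $V\sub\Mea_c(G)$ into which no image $\beta(U_1\times U_2)$ can fit, however small the $0$-neighbourhoods $U_1,U_2$ in $(W,\cO)$ are chosen.

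First I would record the relevant data. Each $\rho_{ij}:=\mu_i*\nu_j$ is a non-zero positive measure with $\|\rho_{ij}\|=\rho_{ij}(G)=\mu_i(G)\,\nu_j(G)=:m_{ij}>0$, and by~\eqref{asexpected} its support $A_{ij}:=\Supp(\rho_{ij})\sub\Supp(\mu_i)\Supp(\nu_j)\sub Y_iZ_j$ is a closed, hence Borel, subset of $G$. Consequently $\{(i,j):A_{ij}\cap K\neq\emptyset\}\sub I_K$ is finite for every compact $K\sub G$, so Lemma~\ref{summeas} applies to the family $(A_{ij})_{(i,j)\in I\times I}$ and supplies a continuous linear restriction map $\Phi\colon\Mea_c(G)\to\bigoplus_{(i,j)}\Mea(A_{ij})$, $\mu\mapsto(\mu|_{\cB(A_{ij})})_{(i,j)}$. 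The crucial point is that the $(i,j)$-component of $\Phi(\rho_{ij})$ is $\rho_{ij}$ itself, of norm $m_{ij}$, because $\Supp(\rho_{ij})\sub A_{ij}$.

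Next I would fix the function $g\colon I\times I\to\,]0,\infty[$ from Lemma~\ref{bisg} and build $V$. Following Remark~\ref{semisum}, the assignment $q\big((\sigma_{ij})_{(i,j)}\big):=\sum_{(i,j)}\frac{\|\sigma_{ij}\|}{g(i,j)\,m_{ij}}$ is a continuous seminorm on the direct sum, so $V:=\Phi^{-1}(\{q<1\})$ is a $0$-neighbourhood in $\Mea_c(G)$. Now suppose, for contradiction, that $\beta$ were continuous at $(0,0)$; choose $0$-neighbourhoods $U_1,U_2$ in $(W,\cO)$ with $\beta(U_1\times U_2)\sub V$. Since each $m_{\mu_i}$ and each $m_{\nu_j}$ is continuous at $0$, there are scalars $s_i,t_j>0$ with $s_i\mu_i\in U_1$ and $t_j\nu_j\in U_2$ for all $i,j\in I$. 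Applying Lemma~\ref{bisg} to the function $v(i):=\min\{s_i,t_i\}>0$ yields indices $i,j$ with $s_it_j\geq v(i)v(j)>g(i,j)$. On the one hand $\beta(s_i\mu_i,t_j\nu_j)=s_it_j\,\rho_{ij}\in V$; on the other hand, reading off the $(i,j)$-component gives $q\big(\Phi(s_it_j\rho_{ij})\big)\geq\frac{s_it_j\,m_{ij}}{g(i,j)\,m_{ij}}=\frac{s_it_j}{g(i,j)}>1$, so $s_it_j\rho_{ij}\notin V$, a contradiction.

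The routine parts are the measure-theoretic bookkeeping: positivity giving $m_{ij}>0$, the support inclusion, and the verification that $\Phi$ and $q$ behave as claimed. The conceptual heart, and the step I expect to need the most care, is the coupling of the two scaling families $(s_i)_{i\in I}$ and $(t_j)_{j\in I}$ into one function $v=\min\{s_\bullet,t_\bullet\}$ so that Lemma~\ref{bisg} can be invoked, together with the calibration of the seminorm $q$ by the factors $g(i,j)\,m_{ij}$, chosen precisely so that membership of $s_it_j\rho_{ij}$ in $V$ is equivalent to the bound $s_it_j\leq g(i,j)$ that Bishop's lemma is built to violate.
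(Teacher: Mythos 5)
Your proposal is correct and takes essentially the same route as the paper's proof: transfer to a direct sum via Lemma~\ref{summeas}, construction of a $0$-neighbourhood calibrated by the function $g$ from Lemma~\ref{bisg}, scaling $\mu_i,\nu_j$ into the given neighbourhoods via continuity of the maps $m_\mu$ at $0$, and the norm computation $\|(s_i\mu_i)*(t_j\nu_j)\|=s_it_j\,\mu_i(G)\nu_j(G)$ to produce the contradiction. The differences are cosmetic: the paper first normalizes $\|\mu_i\|=\|\nu_i\|=1$ and uses the box-type set $S=\{\mu\colon (\forall i,j)\;|\mu|(Y_iZ_j)<g(i,j)\}$ over the family $(Y_iZ_j)$ where you use a sum-seminorm over the closed supports $A_{ij}$ (which, incidentally, neatly avoids worrying about whether the products $Y_iZ_j$ are Borel), and the paper leaves implicit the $\min\{s_i,t_i\}$ device that you spell out when invoking Lemma~\ref{bisg} for two scaling families.
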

\begin{proof}
After passing to a positive multiple, we may assume that
$\|\mu_i\|=\|\nu_i\|=1$ for all $i\in I$.
Let $g\colon I\times I\to \;]0,\infty[$ be as in Lemma~\ref{bisg}.
By Lemma~\ref{summeas}, the restriction maps combine to a continuous linear mapping\linebreak
$\Mea_c(G)\to \bigoplus_{(i,j)\in I\times I}\Mea(Y_iZ_j)$.
Hence the set
\[
S:=\{\mu \in \Mea_c(G) \colon (\forall i,j\in I)\; |\mu|(Y_iZ_j)<g(i,j)\}
\]
is an open $0$-neighbourhood in $\Mea_c(G)$.
We now show that
\begin{equation}\label{leave}
\beta(U\times V)=U*V\not\sub S
\end{equation}
for any $0$-neighbourhoods $U\sub W$
and $V\sub W$.
Hence $\beta$ will be discontinuous at $(0,0)$.
Since~$U$ is a $0$-neighbourhood and $m_{\mu_i}$ is continuous
at~$0$ for $i\in I$, we find $\ve_i>0$
such that $\ve_i \mu_i\in U$.
Likewise, we find $\theta_i>0$ such that $\theta_i\nu_i\in V$.
By choice of~$g$, there exist $i,j\in I$ such that $\ve_i\theta_j>g(i,j)$.
Since $(\ve_i \mu_i)*(\theta_j \nu_j)\in \Mea_c(G)_+$
and
$\Supp((\ve_i \mu_i)*(\theta_j \nu_j))\sub \Supp(\mu_i)\Supp(\nu_j)\sub Y_iZ_j$, we obtain
\begin{eqnarray*}
|(\ve_i \mu_i)*(\theta_j \nu_j)|(Y_iZ_j) & =&
(\ve_i \mu_i*\theta_j \nu_j)(Y_iZ_j)\; =\; 
(\ve_i \mu_i*\theta_j \nu_j)(G)\\
&=&
\ve_i\theta_j \,\mu_i * \nu_j
\; =\; 
\ve_i\theta_i \, (\mu_i \otimes  \nu_j)(G\times G)\\
& =&
\ve_i\theta_j \, \mu_i(G)\nu_j(G)
\;=\; \ve_i\theta_j>g(i,j).
\end{eqnarray*}
Hence $(\ve_i\mu_i)*(\theta_j\nu_j)\not\in S$, establishing (\ref{leave}).
\end{proof}
{\bf Proof of Theorem~B.}
(a) If $G$ is hemicompact, define $\Phi$ for $X:=G$ as in Lemma~\ref{m-emb}.
For $i,j\in \N$, let $f_{i,j}\colon \Mea_{K_i}(G)\times \Mea_{K_j}(G)\to \Mea_{K_iK_j}(G)\sub \Mea_c(G)$
be the convolution map. Then $f_{i,j}$ is continuous,
because $\|f_{i,j}(\mu,\nu)\|=\|\mu*\nu\|\leq \|\mu\|\,\|\nu\|$.
Abbreviate $S:=\bigoplus_{i\in \N}\Mea_{K_i}(X)$.
Since each of the spaces $\Mea_{K_i}(X)$ is normable, it follows
that the bilinear map
$f\colon S\times S\to \Mea_c(G)$, $f((\gamma_i)_{i\in \N},(\eta_j)_{j\in \N}):=\sum_{i,j\in \N} f_{i,j}(\gamma_i,\eta_j)$
is continuous \cite[Corollary~2.4]{Glo}.
Hence also the convolution map~$\beta$ on $\Mea_c(G)$ is continuous,
as it can be written in the form $\beta=f\circ (\Phi\times \Phi)$.

(b) If $G$ is spacious, then there exist uncountable subsets $A,B\sub G$ such that
$\{(a,b)\in A\times B\colon ab\in K\}$
is finite for each compact set $K\sub G$.\linebreak
After replacing~$A$ and~$B$ by subsets whose cardinality is the smallest\linebreak
uncountable
cardinal~$\aleph_1$, we may assume that there exists a bijection\linebreak
$f\colon A\to B$.
Then $(\{a\})_{a\in A}$
and $(\{f(a)\})_{a\in A}$
are families of (singleton) subsets of~$G$ such that
$\{(a,a')\in A\times A\colon \{a\} \{f(a')\}\cap K\not=\emptyset \}$
is finite for each
compact subset $K\sub G$.
Now define $W:=\Mea_c(G)$, with its usual topology,
and note that the point measures $\mu_a:=\delta_a$
at~$a$ and $\nu_{a'}:=\delta_{f(a')}$
at~$f(a')$ on~$G$
are contained in $W\cap \Mea_{\{a\}}(G)_+$
and $W\cap \Mea_{\{f(a')\}}(G)_+$,
respectively.
Thus Lemma~\ref{technmeas}
shows that $\beta$ is not continuous.\vspace{1.5mm}\Punkt

\begin{prop}\label{disco}
Let $(G,r,s,t, b)$ and $\beta\colon C^r_c(G,E_1)\times C^s_c(G,E_2)\to C^t_c(G,F)$
be as in the introduction.
If $G$ is not $\sigma$-compact,
then $\beta$ is not continuous.
\end{prop}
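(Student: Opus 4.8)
The plan is to reduce in two cheap steps to scalar-valued continuous functions, and then to manufacture an explicit obstruction to continuity modelled on Lemma~\ref{technmeas} and the proof of Theorem~B. First I would strip off both $t$ and the bilinear map. Since the inclusion $C^t_c(G,F)\hookrightarrow C^0_c(G,F)$ is continuous (the $C^t$-topology is finer on each $C^r_K$, hence on the direct limit), it suffices to prove that $\tilde\beta\colon C^r_c(G,E_1)\times C^s_c(G,E_2)\to C^0_c(G,F)$, $(\gamma,\eta)\mapsto\gamma*_b\eta$ is discontinuous. As $b\neq0$, pick $v_1\in E_1$, $v_2\in E_2$ with $w:=b(v_1,v_2)\neq0$. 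The maps $\Phi_{v_1},\Phi_{v_2},\Phi_w$ of Lemma~\ref{Phiv} are topological embeddings, and bilinearity of $b$ yields $\tilde\beta(\Phi_{v_1}(\gamma),\Phi_{v_2}(\eta))=\Phi_w(\gamma*\eta)$ with $*$ the ordinary scalar convolution. Continuity of $\tilde\beta$ would thus force continuity of $\beta_0\colon C^r_c(G)\times C^s_c(G)\to C^0_c(G)$, $(\gamma,\eta)\mapsto\gamma*\eta$, and it is enough to show $\beta_0$ is discontinuous at $(0,0)$.

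Next I set up the combinatorics. Being locally compact and not $\sigma$-compact, $G$ is spacious (Remark~\ref{remspa}): there are uncountable $A,B\subseteq G$ with $\{(a,b)\in A\times B\colon ab\in K\}$ finite for each compact $K$. As in the proof of Theorem~B, I pass to subsets of cardinality $\aleph_1$ and a bijection, obtaining families $(a_i)_{i\in I}$, $(b_i)_{i\in I}$ indexed by one uncountable set $I$ with $\{(i,j)\colon a_ib_j\in K\}$ finite for each compact $K$. Fixing a compact identity neighbourhood $W$ with nonempty interior, I choose nonnegative $\gamma_i\in C^r_c(G)$, $\eta_j\in C^s_c(G)$ with $\int_G\gamma_i\,d\lambda_G=\int_G\eta_j\,d\lambda_G=1$ and, crucially, \emph{outer-sided} supports $\Supp(\gamma_i)\subseteq Wa_i$ and $\Supp(\eta_j)\subseteq b_jW$ (translate and renormalise a fixed bump via Lemma~\ref{transla}). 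Then $\Supp(\gamma_i*\eta_j)\subseteq Wa_ib_jW=:L_{i,j}$, and $L_{i,j}\cap K\neq\emptyset$ forces $a_ib_j\in W^{-1}KW^{-1}$; hence $\{(i,j)\colon L_{i,j}\cap K\neq\emptyset\}$ is finite for each compact $K$.

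Finally I would run the Bisgaard escape argument. Let $g\colon I\times I\to\,]0,\infty[$ be as in Lemma~\ref{bisg} and put $S:=\{f\in C^0_c(G)\colon(\forall i,j)\ \int_{L_{i,j}}|f|\,d\lambda_G<g(i,j)\}$. On each $C^0_K(G)$ only finitely many $L_{i,j}$ meet $K$, and there $f\mapsto\int_{L_{i,j}}|f|\,d\lambda_G$ is dominated by $\lambda_G(K)\,\|f\|_\infty$; so $S\cap C^0_K(G)$ is cut out by finitely many continuous seminorm inequalities, whence $S$ is a $0$-neighbourhood. Given $0$-neighbourhoods $U\subseteq C^r_c(G)$, $V\subseteq C^s_c(G)$, I absorb each bump, choosing $\varepsilon_i,\theta_i>0$ with $\varepsilon_i\gamma_i\in U$ and $\theta_i\eta_i\in V$, and apply Lemma~\ref{bisg} to $v(i):=\min(\varepsilon_i,\theta_i)$ to get $i,j$ with $\varepsilon_i\theta_j\geq v(i)v(j)>g(i,j)$. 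Using the clean mass identity $\int_G(\gamma_i*\eta_j)\,d\lambda_G=(\int_G\gamma_i)(\int_G\eta_j)=1$ (a consequence of left invariance of $\lambda_G$) and $\gamma_i*\eta_j\geq0$ supported in $L_{i,j}$, I obtain $\int_{L_{i,j}}(\varepsilon_i\gamma_i)*(\theta_j\eta_j)\,d\lambda_G=\varepsilon_i\theta_j>g(i,j)$, so $(\varepsilon_i\gamma_i)*(\theta_j\eta_j)\notin S$ and $\beta_0(U\times V)\not\subseteq S$.

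The main obstacle is to control the support of $\gamma_i*\eta_j$ so that $(L_{i,j})$ stays locally finite, while simultaneously keeping a \emph{uniform} lower bound on $\gamma_i*\eta_j$ in a non-abelian, possibly non-unimodular $G$. The device that resolves both is to place the bumps on the outer sides, $Wa_i$ and $b_jW$: the support product becomes $Wa_ib_jW$, reducing local finiteness exactly to the spaciousness condition on the single product $a_ib_j$; and to measure size by the total mass $\int(\gamma_i*\eta_j)=(\int\gamma_i)(\int\eta_j)$, which uses only left invariance and is therefore immune to the modular function, instead of a sup-norm or a centrally evaluated value that would drag in awkward conjugation and modular factors.
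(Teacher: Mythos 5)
Your proposal is correct. The core mechanism is the same as the paper's: reduce to scalar-valued convolution via Lemma~\ref{Phiv}, produce an uncountable family of bump functions whose convolution supports $L_{i,j}$ form a family meeting each compact set only finitely often, and run the Bisgaard escape argument (Lemma~\ref{bisg}) against a $0$-neighbourhood defined by mass bounds on the $L_{i,j}$, using that the total mass $\int_G(\gamma_i*\eta_j)\,d\lambda_G=\bigl(\int_G\gamma_i\,d\lambda_G\bigr)\bigl(\int_G\eta_j\,d\lambda_G\bigr)$ only requires left invariance. Where you differ is in the implementation. The paper transfers everything to the measure algebra: it invokes Lemma~\ref{twosetnew} directly to get disjoint, locally finite coset products $aU\cdot Uf(b)$, maps test functions into $\Mea_c(G)$ via $\gamma\mapsto\gamma\odot\lambda_G$ (Lemma~\ref{intomas}), and then quotes the general measure-level discontinuity criterion Lemma~\ref{technmeas} (whose $0$-neighbourhood comes from Lemma~\ref{summeas}), finally using Lemma~\ref{fuvsm} to pull the conclusion back to functions. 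You instead inline the proof of Lemma~\ref{technmeas} at the level of functions: your neighbourhood $S$ is built from the seminorms $f\mapsto\int_{L_{i,j}}|f|\,d\lambda_G$, which are continuous on each $C^0_K(G)$ and nontrivial for only finitely many $(i,j)$ there, and you take as combinatorial input only spaciousness (Remark~\ref{remspa}) rather than the full disjoint-coset statement of Lemma~\ref{twosetnew}; placing the bumps in $Wa_i$ and $b_jW$ makes local finiteness of $(Wa_ib_jW)$ follow from spaciousness applied to $W^{-1}KW^{-1}$. What the paper's route buys is economy across the article: Lemma~\ref{technmeas} is reused for Theorem~B and Corollary~\ref{L1c}. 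What your route buys is self-containedness (no Radon products, total variation, or the function-to-measure dictionary) and a weaker combinatorial interface --- though no actual work is saved there, since the paper's proof that non-$\sigma$-compact implies spacious is itself Lemma~\ref{twosetnew}. Two small points in your favour: your explicit reduction to $t=0$ via the continuous inclusion $C^t_c(G,F)\hookrightarrow C^0_c(G,F)$ makes the typing cleaner than the paper's identification $\beta_c=\theta\circ(\Phi\times\Phi)$, and your application of Lemma~\ref{bisg} to $v(i):=\min(\varepsilon_i,\theta_i)$ spells out a step that the paper's proof of Lemma~\ref{technmeas} passes over silently.
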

Our proof of Proposition~\ref{disco}
uses a property of non-$\sigma$-compact groups:
\begin{la}\label{twosetnew}
Let $G$ be a locally compact group which is not $\sigma$-compact,
and $U\sub G$ be a $\sigma$-compact, open subgroup of $G$.
Then
there exist disjoint uncountable subsets $A,B\sub G$
such that $A$ and $B$ have the same cardinality,
\begin{equation}\label{mutdis}
(\forall (a,b),(a',b')\in A\times B) \quad aUb\cap a'Ub'\not=\emptyset\impl (a,b)=(a',b')\,,
\end{equation}
and $(aUb)_{(a,b)\in A\times B}$ is locally finite.\,\Punkt
\end{la}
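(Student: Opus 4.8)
The plan is to work in the double-coset space $Y:=U\backslash G/U$, with quotient map $\phi\colon G\to Y$; for $y\in Y$ write $D_y:=\phi^{-1}(y)$ for the corresponding double coset. Writing $U=\bigcup_n C_n$ with $C_n$ compact, every double coset $UgU=\bigcup_{n,m}C_n g C_m$ is $\sigma$-compact, so (as $G$ is not $\sigma$-compact) $Y$ is uncountable; moreover $Y$ is discrete, since each $UgU$ is open (a union of the open cosets $gU$). Hence every $\sigma$-compact subset of $Y$ is countable. For $y_1,y_2\in Y$ put $y_1*y_2:=\phi(D_{y_1}D_{y_2})\subseteq Y$; as the continuous image of the $\sigma$-compact set $D_{y_1}D_{y_2}$, the set $y_1*y_2$ is countable. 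Two elementary computations drive the argument. First, $aUb\cap a'Ub'\neq\emptyset$ if and only if $\phi((a')^{-1}a)=\phi(b'b^{-1})$. Second, if $aUb\cap K\neq\emptyset$ then $aub\in K$ for some $u\in U$, and since $aub\in UaU\,UbU$ we get $\phi(aub)\in\phi(a)*\phi(b)$, whence $\phi(K)\cap(\phi(a)*\phi(b))\neq\emptyset$.

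I would build $A=\{a_\xi\}_{\xi<\aleph_1}$ and $B=\{b_\xi\}_{\xi<\aleph_1}$ by transfinite recursion, at stage $\xi$ (so $A_\xi:=\{a_\eta:\eta<\xi\}$ and $B_\xi$ are countable) choosing $a_\xi$, then $b_\xi$, outside a forbidden set that I will argue is $\sigma$-compact; since $G$ is not $\sigma$-compact, such choices exist. I maintain: (i) $\phi$ is injective on $A$ and on $B$ (so the $a_\xi$ lie in distinct left cosets and the $b_\xi$ in distinct right cosets); (ii) $A\cap B=\emptyset$; (iii) for distinct chosen pairs, $\phi((a')^{-1}a)\neq\phi(b'b^{-1})$, which by the first computation makes the rectangles $aUb$ pairwise disjoint, giving (\ref{mutdis}); and, crucially, (iv) the sets $\phi(a)*\phi(b)$, $(a,b)\in A\times B$, are pairwise disjoint in $Y$, i.e.\ each $y_0\in Y$ lies in at most one product.

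Each requirement forbids only a $\sigma$-compact set of candidates. For (i), (ii) this is clear (avoid the countably many double cosets $D_{\phi(a_\eta)}$, resp.\ countably many points). For (iii), an equation $\phi((a')^{-1}a_\xi)=\phi(b'b^{-1})$ or $\phi(a_\xi^{-1}a')=\phi(b'b^{-1})$ confines $a_\xi$ to a translate $a'\phi^{-1}(\{\cdot\})$ of a single double coset; over the countably many earlier indices this is $\sigma$-compact, and symmetrically for $b_\xi$. For (iv), adjoining $a_\xi$ creates the products $\phi(a_\xi)*\phi(b_\eta)$; demanding that these miss the countable set $H_\xi:=\bigcup\{\phi(a)*\phi(b): a,b\text{ chosen before}\}$ and be mutually disjoint forbids $\phi(a_\xi)$ from a countable subset of $Y$, because $y_0\in\phi(a_\xi)*\phi(b_\eta)\iff\phi(a_\xi)\in y_0*\phi(b_\eta)^{-1}$ and each such product is countable; thus $a_\xi$ is merely kept out of $\phi^{-1}(\text{countable})$, again $\sigma$-compact. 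The union of all these constraints is $\sigma$-compact, leaving room to pick $a_\xi$ and then $b_\xi$.

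Local finiteness then follows from (iv) and the second computation: a compact $K\subseteq G$ meets finitely many cosets, so $\phi(K)$ is finite; if $aUb\cap K\neq\emptyset$ then some $y_0\in\phi(K)$ lies in $\phi(a)*\phi(b)$, and by (iv) together with the injectivity in (i) each such $y_0$ is realized by at most one pair $(a,b)$, so at most $|\phi(K)|$ rectangles meet $K$. I expect this local-finiteness clause to be the main obstacle: the naive formulation quantifies over all compacta and cannot be driven by an $\aleph_1$-recursion directly, and since $U$ is non-compact a single rectangle $aUb$ spreads over unboundedly many cosets, so no measure- or diameter-based counting applies. The key that unlocks it is isolating invariant (iv) in $Y$ and observing that, because $Y$ is discrete with countable $\sigma$-compact subsets, the constraints it imposes pull back to $\sigma$-compact subsets of $G$.
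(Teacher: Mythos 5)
Your reduction to the double-coset space $Y=U\backslash G/U$ is attractive, and your computations and invariants (i)--(iii) are correct and do yield~(\ref{mutdis}); but there is a genuine gap exactly at the point you call the ``key'', namely the maintenance of invariant (iv). Your justification --- $y_0\in\phi(a_\xi)*\phi(b_\eta)\iff\phi(a_\xi)\in y_0*\phi(b_\eta)^{-1}$, each such set countable --- only shows that the new products can be kept disjoint from the \emph{previously created} ones, since there $y_0$ ranges over the countable set $H_\xi$. It does not handle mutual disjointness of the new products among themselves, i.e.\ of $\phi(a_\xi)*\phi(b_\eta)$ and $\phi(a_\xi)*\phi(b_{\eta'})$ for $\eta\neq\eta'<\xi$: there the witness $y_0$ ranges over all of the uncountable set $Y$, so the same equivalence yields an uncountable union of countable sets and no control. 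Concretely, $\phi(a)*\phi(b_\eta)\cap\phi(a)*\phi(b_{\eta'})\neq\emptyset$ holds if and only if $a^{-1}Ua\cap Ub_{\eta'}Ub_\eta^{-1}U\neq\emptyset$; this is a condition on the conjugate $a^{-1}Ua$, not a condition confining $a$ to countably many translates of double cosets, and the set of bad $a$ need not be $\sigma$-compact. For instance, let $G$ be the (discrete, non-$\sigma$-compact) group of finitary permutations of $\omega_1$ and $U$ the subgroup of finitary permutations of $\omega$. For \emph{any} admissible $b_\eta,b_{\eta'}$ the element $c:=b_{\eta'}b_\eta^{-1}\neq 1$ lies in $Ub_{\eta'}Ub_\eta^{-1}U$, and $U$ contains some $u\neq 1$ with the same (finite) cycle type as $c$, hence conjugate to $c$, say $g^{-1}ug=c$; then the bad set contains the entire coset $C_G(u)g$, which is uncountable because $C_G(u)$ contains all finitary permutations of $\omega_1\setminus\mathrm{supp}(u)$. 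So no choice of the $b$'s makes these forbidden sets $\sigma$-compact, and the recursion principle driving your proof (``each constraint forbids only a $\sigma$-compact set'') fails at precisely this step. The same problem occurs symmetrically for the pairs $(a_\eta,b_\xi),(a_{\eta'},b_\xi)$ when $b_\xi$ is chosen; and these same-coordinate constraints cannot be deferred, since both partners of the shared element are already fixed when it is chosen (reordering so that all of $A$ precedes all of $B$ only moves the difficulty, creating uncountably many such constraints on each $b$).

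This is also why the paper does not work with the product sets $\phi(a)*\phi(b)$ at all. There, each new point is chosen outside the \emph{subgroup} generated by $U$ and all previously chosen points, and local finiteness is proved directly by compactness: assuming infinitely many rectangles $x_\alpha Uy_\beta$ meet a compact set $K$, one exhibits infinitely many points of $K$ lying in pairwise distinct left (or right) cosets of $U$, contradicting compactness because the cosets partition $G$ into open sets. The subgroup-generation independence is what disposes of the shared-coordinate configurations --- exactly the information your invariant (iv) was meant to supply but cannot obtain from $\sigma$-compactness avoidance; note that even adding the paper's requirement $a_\xi\notin\langle U\cup A_\xi\cup B_\xi\rangle$ to your scheme would not rescue (iv), since $a^{-1}ua$ can lie in a subgroup not containing $a$. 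To salvage your route you would need either a genuinely new argument for same-coordinate disjointness of the products, or a weaker invariant that still bounds, for each $y_0\in Y$, the number of pairs $(a,b)\in A\times B$ with $y_0\in\phi(a)*\phi(b)$ --- and that bound is precisely the hard content of the lemma.
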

\begin{proof}
Let $\omega_1$ be the first uncountable ordinal. Fix a well-ordering $\preceq$
on $G$.
We prove the following assertion $P(\theta)$ for all ordinals $\theta\leq \omega_1$,
by transfinite induction:

$P(\theta)$: There exist uniquely determined families $(A_\alpha)_{\alpha\leq\theta}$ and $(B_\alpha)_{\alpha\leq\theta}$
of subsets $A_\alpha, B_\alpha\sub G$
and a unique family $(f_\alpha)_{\alpha\leq \theta}$ of bijections $f_\alpha\colon A_\alpha\to B_\alpha$
such that $A_0=B_0=\emptyset$, $A_\alpha$ and $B_\alpha$ are countable
for all $\alpha\leq\theta$ such that $\alpha\not=\omega_1$, moreover
\begin{eqnarray}
\lefteqn{(\forall \alpha\leq \theta)\quad (\alpha+1\leq\theta)\impl}\qquad \notag \\
& & \hspace*{-15mm}\left\{
\begin{array}{cclll}
A_{\alpha+1} & = & A_\alpha\cup\{x_\alpha\} &
\mbox{with} & x_\alpha=\min (G\setminus \langle \, U\, \cup \, A_\alpha\, \cup \, B_\alpha\, \rangle) \mbox{ and}\\
B_{\alpha+1} & = & B_\alpha\cup\{y_\alpha\} &\mbox{with} & y_\alpha=\min(G\setminus \langle U\cup A_{\alpha+1}\cup B_\alpha\rangle),
\end{array}
\right.
\label{propersub}
\end{eqnarray}
$A_\alpha=\bigcup_{\beta<\alpha}A_\beta$ and $B_\alpha=\bigcup_{\beta<\alpha}B_\beta$
for all limit ordinals $\alpha\leq\theta$,
and $f_\alpha|_{A_\beta}=f_\beta$ for all $\beta\leq \alpha\leq\theta$.

Here, the minima in (\ref{propersub}) refer to the chosen well-ordering on~$G$.
Because $A_\alpha$ and $B_\alpha$ are countable if
$\alpha+1\leq \theta$, the group
$\langle \, U\, \cup \, A_\alpha\, \cup \, B_\alpha\, \rangle$
is $\sigma$-compact and hence not all of~$G$.
As a consequence, its complement
$G\setminus \langle \, U\, \cup \, A_\alpha\, \cup \, B_\alpha\, \rangle$ is non-empty and
the first minimum in~(\ref{propersub})
makes sense. Similarly, the second minimum makes sense.

To prove $P(\theta)$ by transfinite induction,
note that $P(0)$ is satisfied if and only if $A_0=B_0=f_0=\emptyset$.

If $\theta$ is a non-zero limit ordinal and $P(\theta')$ holds for all $\theta'<\theta$,
write $((A^{\theta'}_\alpha)_{\alpha\leq \theta'}, (B^{\theta'}_\alpha)_{\alpha\leq\theta},(f_\alpha^{\theta'})_{\alpha\leq\theta'})$
for the triple $((A_\alpha)_{\alpha\leq \theta'}, (B_\alpha)_{\alpha\leq\theta'},(f_\alpha)_{\alpha\leq\theta'})$
that is uniquely determined by $P(\theta')$.
If $\theta''\leq \theta'<\theta$,
the uniqueness in $P(\theta'')$ implies that $A_\alpha^{\theta''}=A_\alpha^{\theta'}$,
$B_\alpha^{\theta''}=B_\alpha^{\theta'}$ and $f_\alpha^{\theta''}=f_\alpha^{\theta'}$
for all $\alpha\leq \theta''$.
For $\alpha<\theta$, choose $\theta'<\theta$ such that $\alpha\leq \theta'$;
then $A_\alpha:=A^{\theta'}_\alpha$,
$B_\alpha:=B^{\theta'}_\alpha$,
and $f_\alpha:=f^{\theta'}_\alpha$
are independent of the choice of~$\theta'$
(as just observed). We also set $A_\theta:=\bigcup_{\alpha<\theta}A_\alpha$,
$B_\theta:=\bigcup_{\alpha<\theta}B_\alpha$
and $f_\theta:=\bigcup_{\alpha<\theta}f_\alpha$.
Then $P(\theta)$ holds.

If $\theta=\theta'+1$,
let
$A_\alpha:=A^{\theta'}_\alpha$,
$B_\alpha:=B^{\theta'}_\alpha$,
and $f_\alpha:=f^{\theta'}_\alpha$
for $\alpha\leq\theta'$.
Define $A_\theta :=  A_{\theta}\cup\{x_{\theta'}\}$
with $x_{\theta'}=\min (G\setminus \langle \, U\, \cup \, A_{\theta'}\, \cup \, B_{\theta'}\, \rangle)$.
Also, define
$B_\theta:= B_{\theta'}\cup\{y_{\theta'}\}$
with $y_{\theta'}:=\min(G\setminus \langle U\cup A_\theta\cup B_{\theta'}\rangle)$.
Then $P(\theta)$ is satisfied. The inductive proof is complete.

Now, set $A:=A_{\omega_1}$ and
$B:=B_{\omega_1}$.
These are uncountable sets,
as they can be considered as the disjoint unions
$A=\bigcup_{\alpha<\omega_1}\{x_\alpha\}$ and
$B=\bigcup_{\alpha<\omega_1}\{y_\alpha\}$.
Moreover, $f_{\omega_1}\colon A\to B$ is a bijection,
and (\ref{mutdis}) can be inferred from $P(\omega_1)$.
In fact, assume that
\[
x_\alpha U y_\beta \cap x_\gamma U y_\delta \not=\emptyset\,.
\]
Thus, there exist $u,w\in U$ such that $x_\alpha u y_\beta  = x_\gamma w y_\delta $.
Let $\theta:=\max\{\alpha,\beta,\gamma,\delta\}$.
If $\beta=\theta$ and $\delta<\theta$, then 
$H:=\langle U\cup A_{\theta+1}\cup B_\theta\rangle$
would be a subgroup containing $U$ and all of $x_\alpha, x_\gamma$ and $y_\delta$.
Hence $y_\theta= y_\beta= u^{-1}x_\alpha^{-1}x_\gamma w y_\delta \in H$,
contradicting (\ref{propersub}).
Hence $\beta=\theta$ implies $\delta=\theta=\beta$.
Thus $x_\alpha u = x_\gamma w$ in this case. If $\alpha>\gamma$,
let $I:=\langle U\cup A_\alpha\rangle$.
Then $u,w,x_\gamma\in I$ and hence also $x_\alpha=x_\gamma w u^{-1}\in I$,
contradicting (\ref{propersub}).
The same argument excludes the case $\alpha<\gamma$,
and thus $\alpha=\gamma$.

Likewise, $\delta=\theta$ implies $\beta=\delta$,
from which $\alpha=\gamma$ follows as just shown.

If $\beta<\theta$ and $\delta<\theta$,
we may assume that $\alpha=\theta$
(the case $\gamma=\theta$ is analogous).
If we would have $\gamma<\alpha$,
then $H:=\langle U\cup A_\alpha\cup B_\alpha \rangle$ would be a subgroup
containing $\{u,y_\beta,x_\gamma,w, y_\delta\}$.
Hence $x_\alpha =x_\gamma w y_\delta y_\beta^{-1} u^{-1}\in H$,
contradicting~(\ref{propersub}).
Thus $\alpha=\gamma$. But then
$u y_\beta  = w y_\delta $.
Without loss of generality $\beta\leq\delta$.
If we would have $\beta<\delta$,
then $I:=\langle U\cup B_\delta\rangle$
would be a subgroup containing $\{u,y_\beta,w\}$.
Hence $y_\delta = w^{-1}u y_\beta$ would be in~$I$,
contradicting~(\ref{propersub}). Thus~(\ref{mutdis})
holds.

If $K\sub G$ is a compact set,
let $\Phi$ be the set
of all pairs $(\alpha,\beta)$ with $\alpha,\beta<\omega_1$
such that $x_\alpha Uy_\beta \cap K\not=\emptyset$.
To see that $\Phi$ is finite,
let us suppose that~$\Phi$ was infinite and derive a contradiction.
Case~1: Assume that $\Theta:=\{\max\{\alpha,\beta\}\colon (\alpha,\beta)\in \Phi\}$
is finite. Then (1a) the set $C:=\{\beta\leq \alpha_0\colon (\alpha_0,\beta)\in \Phi\}$
is infinite for some $\alpha_0<\omega_1$, or (1b)
the set $D:=\{\alpha\leq \beta_0\colon (\alpha,\beta_0)\in \Phi\}$
is infinite for some $\beta_0< \omega_1$.
In case (1a), $K$ meets $x_{\alpha_0}U y_\beta$ for all $\beta\in C$
(which are disjoint sets),
and hence the compact set $x_{\alpha_0}^{-1} K$ meets $U y_\beta$
for all $\beta\in C$, and also these sets are disjoint.
But the set $U \backslash G$ of all right cosets of $U$
is an open cover of $x_{\alpha_0}^{-1}K$ by disjoint open sets,
and hence $\{S\in U\backslash G\colon x_{\alpha_0}^{-1}K\cap S\not=\emptyset\}$
must be finite, contradiction.
In case (1b), $K$ meets $x_\alpha U y_{\beta_0}$ for all $\alpha\in D$
(which are disjoint sets),
and hence the compact set $Ky_{\beta_0}^{-1}$ meets $x_\alpha U $
for all $\alpha \in D$, and also these sets are disjoint.
But the set $G/U$ of all left cosets of $U$
is an open cover of $K y_{\beta_0}^{-1}$ by disjoint open sets,
and hence $\{S\in G/U\colon Ky_{\beta_0}^{-1}\cap S\not=\emptyset\}$
must be finite, contradiction.
Case~2: Assume that $\Theta$ is infinite.
For each $\theta\in \Theta$,
pick $(\alpha_\theta,\beta_\theta)\in \Phi$ such that $\max\{\alpha_\theta,\beta_\theta\}=\theta$.
Also, pick $z_\theta\in K\cap x_{\alpha_\theta}U y_{\beta_\theta}$.
Then (2a) $C:=\{\theta\in \Theta\colon \theta=\beta_\theta\}$
is infinite or (2b) the set
$D:=\{\theta\in \Theta\colon \theta=\alpha_\theta>\beta_\theta\}$ is infinite.
In case (2a),
if $\theta,\theta'\in C$ and $\theta<\theta'$,
then $x_{\alpha_\theta} U y_{\beta_\theta}\sub \langle U\cup A_{\theta'+1}\cup B_{\theta'}\rangle=:H$
and $x_{\alpha_{\theta'}}\in H$ (as $\alpha_{\theta'}\leq\theta'$).
Since $y_{\beta_{\theta'}}=y_{\theta'}\not\in H$ by (\ref{propersub}),
we have $H\cap H y_{\beta_{\theta'}}=\emptyset$
and hence $x_{\alpha_\theta} U y_{\beta_\theta} U\cap x_{\alpha_{\theta'}}Uy_{\beta_{\theta'}}=\emptyset$,
entailing that $z_\theta$ and $z_{\theta'}$ lie in different
left cosets of~$U$, i.e., $z_\theta U\cap z_{\theta'}U=\emptyset$.
Hence $K$ meets infinitely many left cosets of~$U$,
contradiction.
In case (2b),
if $\theta,\theta'\in D$ and $\theta<\theta'$,
then $x_{\alpha_\theta} U y_{\beta_\theta}\sub \langle U\cup A_{\theta'} \cup B_{\theta'}\rangle=:H$
and $y_{\beta_{\theta'}}\in H$ (as $\beta_{\theta'}<\theta'$).
Since $x_{\alpha_{\theta'}}=x_{\theta'}\not\in H$ by (\ref{propersub}),
we have $H\cap x_{\alpha_{\theta'}}H=\emptyset$
and hence $x_{\alpha_\theta} U y_{\beta_\theta} U\cap x_{\alpha_{\theta'}}Uy_{\beta_{\theta'}}=\emptyset$,
entailing that $z_\theta$ and $z_{\theta'}$ lie in different
left cosets of~$U$.
Hence $K$ meets infinitely many left cosets of~$U$,
contradiction.
\end{proof}
{\bf Proof of Proposition~\ref{disco}.} We write $\beta_b$ in place of~$\beta$.

As $b\not=0$, there exist non-zero vectors
$v\in E_1$, $w\in E_2$ and $z\in F$ such that $\beta(v,w)=z$.
Let
$\Phi_v\colon C^r_c(G)\to C^r_c(G,E_1)$,
$\Phi_w\colon C^s_c(G)\to C^s_c(G,E_2)$
and $\Phi_z\colon \!C^t_c(G)\!\to\! C^t_c(G,F)$
be the linear
topological embeddings from Lemma~\ref{Phiv}.
If $c\colon \R\times \R\to\R$, $(s,t)\mto s\cdot t$
is the scalar multiplication, then
\begin{equation}\label{redsca}
\beta_b\circ (\Phi_v\times \Phi_w)=\Phi_z\circ \beta_c\,.
\end{equation}
Hence $\beta_b$ will be discontinuous
if we can show that $\beta_c$ is discontinuous.
Let $\theta\colon \Mea_c(G)\times \Mea_c(G)\to \Mea_c(G)$
be convolution of measures.
Let $U\sub G$ be a $\sigma$-compact open subgroup.
As we assume that $G$ is not $\sigma$-compact,
Lemma~\ref{twosetnew}
provides uncountable subsets $A,B\sub G$
and a bijection $f\colon A\to B$, such that $(aUb)_{(a,b)\in A\times B}$
is a locally finite family of disjoint open subsets of~$G$.
Define $Y_a:=aU$ and $Z_a:=Uf(a)$ for $a\in A$.
Then $(Y_aZ_b)_{(a,b)\in A\times A}$
is a locally finite family of disjoint open
subsets of~$G$.
The map~$\Phi$ from Lemma~\ref{intomas}
(applied with $\mu:=\lambda_G$)
is continuous linear and injective.
We endow its image $W:=\im(\Phi)\sub \Mea_c(G)$
with the topology making $\Phi$ a homeomorphism onto~$W$.
For all $a\in A$,
there exist non-zero functions $g_a \in C_c^r(G)$ and $h_a\in C^s_c(G)$ with
$g_a,h_a\geq 0$ pointwise and $\Supp(g_a)\sub Y_a$,
$\Supp(h_a)\sub Z_a$.
Now the hypotheses of Lemma~\ref{technmeas} are satisfied with $\mu_a:=g_a\odot \lambda_G$
and $\nu_a:=h_a\odot \lambda_G$.
Hence $\theta|_{W\times W}$
is discontinuous. But $\beta_c=\theta\circ (\Phi\times \Phi)$
(see Lemma~\ref{fuvsm}), entailing that also $\beta_c$ is discontinuous.\,\Punkt

\begin{rem}\label{remspa}
A locally compact group~$G$ is spacious if and only if it is not $\sigma$-compact.
In fact, if $G$ is spacious, then the convolution map\linebreak
$\beta\colon \Mea_c(G)\times \Mea_c(G)\to \Mea_c(G)$
is discontinuous (see Theorem~B\,(b)),
whence~$G$ is not hemicompact (by Theorem~B\,(a))
and hence not $\sigma$-compact.
If $G$ is not $\sigma$-compact, then~$G$ is spacious,
as a consequence of Lemma~\ref{twosetnew}.
\end{rem}
\begin{cor}\label{L1c}
For $G$ a locally compact group,
the convolution mapping\linebreak
$\beta\colon L^1_c(G)\times L^1_c(G)\to L^1_c(G)$
is continuous if and only if $G$ is $\sigma$-compact.
\end{cor}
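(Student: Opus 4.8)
The plan is to prove the two implications separately, in each case reducing to the corresponding statement for Radon measures via the isometric identification of $L^1$-functions with absolutely continuous measures. Throughout I write $\beta$ for the $L^1$-convolution and use Lemma~\ref{intomas}, which furnishes the continuous linear injection $\Psi\colon L^1_c(G)\to\Mea_c(G)$, $\gamma\mto\gamma\odot\lambda_G$, together with Lemma~\ref{fuvsm}, which gives $\Psi(\gamma*\eta)=\Psi(\gamma)*\Psi(\eta)$. On each Banach step $\Psi$ restricts to an isometry $L^1_K(G)\to\Mea_K(G)$, since $\|\gamma\odot\lambda_G\|=\int_G|\gamma|\,d\lambda_G=\|\gamma\|_{L^1}$; combined with the measure estimate $\|\mu*\nu\|\le\|\mu\|\,\|\nu\|$ from Section~\ref{secmeasset} this yields the product estimate $\|\gamma*\eta\|_{L^1}\le\|\gamma\|_{L^1}\|\eta\|_{L^1}$, so I need not invoke Young's inequality separately.

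For the direction ``$\sigma$-compact $\impl$ continuous'' I would mimic the proof of Theorem~B(a). Since a locally compact $\sigma$-compact group is hemicompact, fix compacta $K_1\sub K_2\sub\cdots$ with $K_0:=\emptyset$ exhausting $G$, so that $L^1_c(G)=\dl\,L^1_{K_n}(G)$. The map $\Phi\colon L^1_c(G)\to\bigoplus_{n\in\N}L^1_{K_n}(G)$, $\gamma\mto(\one_{K_n\setminus K_{n-1}}\gamma)_{n\in\N}$ is continuous: restricted to $L^1_{K_m}(G)$ only the components $n\le m$ survive, and each is norm-contractive, so continuity follows from the universal property of the direct limit. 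Each convolution map $f_{i,j}\colon L^1_{K_i}(G)\times L^1_{K_j}(G)\to L^1_{K_iK_j}(G)\sub L^1_c(G)$ is continuous by the product estimate above (note $K_iK_j$ is compact), and the spaces $L^1_{K_i}(G)$ are Banach, hence normable; thus \cite[Corollary~2.4]{Glo} renders the induced bilinear map $f$ on $S\times S$, with $S:=\bigoplus_n L^1_{K_n}(G)$, continuous. As $\sum_n\one_{K_n\setminus K_{n-1}}\gamma=\gamma$ with only finitely many nonzero terms, bilinearity gives $\beta=f\circ(\Phi\times\Phi)$, a composite of continuous maps.

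For the converse I would adapt the proof of Proposition~\ref{disco}. Assume $G$ is not $\sigma$-compact and fix a $\sigma$-compact open subgroup $U\sub G$. Lemma~\ref{twosetnew} supplies uncountable $A,B\sub G$ and a bijection $f\colon A\to B$ with $(aUb)_{(a,b)\in A\times B}$ locally finite and disjoint; putting $Y_a:=aU$ and $Z_a:=Uf(a)$ makes $(Y_aZ_b)_{(a,b)\in A\times A}$ locally finite, so $I_K:=\{(a,b)\colon Y_aZ_b\cap K\ne\emptyset\}$ is finite for each compact $K\sub G$. I then transport the topology of $L^1_c(G)$ to $W:=\im(\Psi)$ so that $\Psi\colon L^1_c(G)\to(W,\cO)$ is a homeomorphism; scalar multiplication is thereby continuous on $(W,\cO)$. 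Choosing, for each $a$, nonzero $g_a,h_a\in L^1_c(G)$ with $g_a,h_a\ge0$, $\Supp(g_a)\sub Y_a$ and $\Supp(h_a)\sub Z_a$ (possible since the nonempty open sets $Y_a,Z_a$ carry positive Haar measure), the measures $\mu_a:=\Psi(g_a)$ and $\nu_a:=\Psi(h_a)$ verify the hypotheses of Lemma~\ref{technmeas}. Hence measure convolution restricted to $(W,\cO)\times(W,\cO)$ is discontinuous. Since $\Psi\circ\beta$ equals this restricted convolution precomposed with the homeomorphism $\Psi\times\Psi$ (by Lemma~\ref{fuvsm}), continuity of $\beta$ would force that restriction to be continuous, a contradiction.

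The main obstacle I anticipate is bookkeeping rather than conceptual: the point is that $\Psi$ is only a levelwise isometry and need not be a global topological embedding. This is precisely why, for the hard direction, I transport the topology onto $\im(\Psi)$ exactly as in Proposition~\ref{disco}, and why, for the easy direction, I route continuity through $\Phi$ and \cite[Corollary~2.4]{Glo} rather than through $\Psi$ directly.
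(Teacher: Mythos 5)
Your proposal is correct and follows essentially the same route as the paper's own proof: the same decomposition $\beta=f\circ(\Phi\times\Phi)$ via \cite[Corollary~2.4]{Glo} for the $\sigma$-compact direction, and for the converse the same reduction to Lemma~\ref{technmeas} by transporting the topology of $L^1_c(G)$ onto $\im(\Psi)\sub\Mea_c(G)$ and invoking Lemmas~\ref{twosetnew} and~\ref{fuvsm}. The only cosmetic differences (deriving Young's inequality through the isometric identification with measures, and taking nonnegative $L^1$ densities rather than the $C_c$ functions reused from Proposition~\ref{disco}) do not change the argument.
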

\begin{proof}
If $G$ is $\sigma$-compact, using the local compactness we find
compact subsets $K_n\sub G$ such that $G=\bigcup_{n=1}^\infty K_n$
and $K_n$ is contained in the interior of~$K_{n+1}$. Set $K_0:=\emptyset$
and abbreviate $S:=\bigoplus_{n\in \N} L^1_{K_n}(G)$.
Then the map
\[
\Phi\colon L^1_c(G)\to S\,,\;\; 
\Phi(\gamma):=(\one_{K_i\setminus K_{i-1}}\gamma)_{i\in \N}
\]
is linear, injective, and continuous (as in the proof of Lemma~\ref{m-emb},
using that $\|\one_{K_i\setminus K_{i-1}}\gamma\|_{L^1}\leq
\|\gamma\|_{L^1}$).
Since $\|\gamma*\eta\|_{L^1}\leq \|\gamma\|_{L^1}\|\eta\|_{L^1}$,
the restriction $f_{i,j}$ of $\beta$ to $L^1_{K_i}(G)\times L^1_{K_j}(G)$ is continuous for
all $i,j\in \N$. As all of the spaces $L^1_{K_i}(G)$ are normable,
\cite[Corollary~2.4]{Glo} shows that $f\colon S\times S\to L^1_c(G)$,
$f((\gamma_i)_{i\in\N},(\eta_j)_{j\in \N}):=\sum_{i,j\in \N}f_{i,j}(\gamma_i,\eta_j)$
is continuous.
Hence $\beta=f\circ (\Phi\times\Phi)$
is continuous as well.

If $G$ is not $\sigma$-compact,
let $\theta\colon \Mea_c(G)\times \Mea_c(G)\to \Mea_c(G)$
be convolution of measures.
Let $U\sub G$ be a $\sigma$-compact open subgroup,
$(Y_a)_{a\in A}$, $(Z_a)_{a\in A}$,
$g_a$ and $h_a\in C_c(G)$ be as in the proof of Proposition~\ref{disco}.
The map~$\Psi$ from Lemma~\ref{intomas}
(applied with $\mu:=\lambda_G$)
is continuous linear and injective.
We endow its image $W:=\im(\Psi)\sub \Mea_c(G)$
with the topology making~$\Psi$ a homeo\-morphism onto~$W$.
Now the hypotheses of Lemma~\ref{technmeas} are satisfied,
whence $\theta|_{W\times W}$
is discontinuous and hence also $\beta=\theta\circ (\Psi\times \Psi)$.
\end{proof}
\section{Convolution of {\boldmath$C^k_c$}-maps and {\boldmath$C^\infty_c$}-maps}\label{bness}
In this section, we prove the necessity of condition~(b) in Theorem~A.
Thus, we assume that~(b) is violated
and deduce that $\beta_b:=\beta$ is discontinuous.
In view of Proposition~\ref{disco},
it suffices to show this if~(a) is satisfied.

Thus, let $G$ be a $\sigma$-compact, non-discrete, non-compact
Lie group, and $\cg:=T_1(G)$.
If $r=\infty$ and $s<\infty$, we have $\beta_b(\gamma,\eta)=(\beta_{b^\vee}(\eta^*,\gamma^*))^*$
for $(\gamma,\eta)\in C^\infty_c(G,E_1)\times C^s_c(G,E_2)$,
where $\beta_{b^\vee}\colon C^s_c(G,E_2)\times C^\infty_c(G,E_1)\to C^\infty_c(G,F)$
and $*$ stands for the involutions
on $C^s_c(G,E_2)$, $C^\infty_c(G,E_1)$ and $C^\infty_c(G,F)$,
respectively, which are isomorphisms of topological vector spaces by Lemma~\ref{invoaut}.
Hence discontinuity of $\beta_{b^\vee}$
will entail discontinuity of~$\beta_b$.
It therefore suffices to assume that $r<\infty$ and $s=\infty$
in the rest of the proof.

I show that the convolution map
$\beta\colon C^r_c(G,E_1)\times C^\infty_c(G,E_2)\to C^\infty_c(G,F)$
is discontinuous. As in the proof of Proposition~\ref{disco},
we may assume that $E_1=E_2=F=\R$
and that $b\colon \R\times \R\to\R$
is multiplication, for the proof of discontinuity.
Let $K\sub G$ be a compact identity neighbourhood, and
$M \sub G$ be a relatively compact, open set such that $KK\sub M$.
There exists a sequence $(x_i)_{i\in \N}$
in $G$ such that $(x_iM)_{i\in \N}$ is locally
finite. For each $i\in \N$, let $h_i\in C^\infty_c(G)$
be a function such that $\Supp(h_i)\sub x_iM$ and $h_i=1$
on some neighbourhood of $x_iKK$.
Let $\Omega$ be the set of all
$\gamma\in C^\infty_c(G)$ such that $\|\gamma\|^L_{i,x_iKK}=\|h_i\gamma\|^L_{i,x_iKK}<1$
for all $i\in \N$ (with notation as in Definition~\ref{deflrmrtr}).
Then $\Omega$ is an open $0$-neighbourhood in $C^\infty_c(G)$
(cf.\ Lemma~\ref{lcsum}).
If $\beta$ was continuous, then we could find $0$-neighbourhoods
$V\sub C^r_c(G)$ and $W\sub C^\infty_c(G)$
such that $\beta(V\times W)\sub \Omega$.
There exists $s\in \N_0$ and $\tau >0$ such that
$\{\eta\in C^\infty_K(G)\colon \|\eta\|^L_s\leq \tau \}\sub W$.
Also, for each $i\in \N$ there exists $\sigma_i>0$
such that $\{\gamma\in C^r_{x_iK}(G)\colon \|\gamma\|^L_r\leq\sigma_i\}\sub V$.
Thus
$\beta(\gamma,\eta)\in \Omega$ and hence
\[
\|\gamma*\eta\|^L_{i,x_iKK}<1
\]
for all $\gamma\in C^r_{x_iK}(G)$
and $\eta\in C^\infty_K(G)$ such that $\|\gamma\|^L_r\leq \sigma_i$
and $\|\eta\|^L_s\leq \tau$.
Hence, using Lemmas~\ref{normtrans} and~\ref{efftra},
\[
\|\gamma*\eta\|^L_{i,KK} =\|\tau^L_{x_i^{-1}}(\gamma*\eta)\|_{i,x_iKK}^L
=\|(\tau^L_{x_i^{-1}}\gamma)*\eta\|^L_{i,x_iKK}<1
\]
for all $\gamma\in C^r_K(G)$
and $\eta\in C^\infty_K(G)$ such that $\|\gamma\|^L_r\leq \sigma_i$
and $\|\eta\|^L_s\leq \tau$. But this contradicts the following lemma.\,\Punkt
\begin{la}\label{notcs}
Let $G$ be a non-discrete Lie group, $K\sub G$ be a compact identity neighbourhood,
and $r,s\in \N_0$.
Then the convolution map
\begin{equation}\label{suppcts}
(C^r_K(G),\|.\|^L_r)\times (C^\infty_K(G),\|.\|^L_s)\to C^\infty_{KK}(G)
\end{equation}
is discontinuous,
if one uses the ordinary Fr\'{e}chet space topology on the right hand side,
but merely the two indicated norms on the left.
\end{la}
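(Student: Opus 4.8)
The plan is to exhibit, for each prospective continuity estimate, an explicit pair of functions that violates it, using the non-discreteness of $G$ to produce test functions supported in arbitrarily small neighbourhoods of the identity whose $L$-derivatives along left-invariant vector fields can be made large relative to their sup-norm. The key point is that the left-hand side uses only the fixed-order seminorms $\|.\|^L_r$ and $\|.\|^L_s$, whereas the target carries the full Fréchet topology, so I must defeat $\|\gamma*\eta\|^L_{i,KK}$ for arbitrarily large $i$ while keeping $\|\gamma\|^L_r$ and $\|\eta\|^L_s$ bounded. Suppose, for contradiction, that the map in (\ref{suppcts}) is continuous. Then there exist $i\in\N$, a constant $C>0$, and orders already fixed at $r$ and $s$ such that
\[
\|\gamma*\eta\|^L_{i,KK}\;\leq\; C\,\|\gamma\|^L_r\,\|\eta\|^L_s
\]
for all $\gamma\in C^r_K(G)$ and $\eta\in C^\infty_K(G)$; it suffices to refute this single inequality for $i>r+s$.

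First I would reduce to a local computation on a chart. Since $G$ is a non-discrete Lie group, pick a chart around $1$ identifying a neighbourhood with an open subset of $\R^m$ ($m=\dim G$) on which the left-invariant frame $\cF_L$ is a fixed smooth frame; near $1$ the operators $\cL_v$ are, up to smooth bounded coefficients, the coordinate derivatives. Using Lemma~\ref{framediff} and Lemma~\ref{enoughsn}, the seminorm $\|.\|^L_{i,KK}$ controls (and is controlled by) ordinary iterated coordinate derivatives up to order $i$ on the relevant compact set. Thus it is enough to defeat the analogous inequality for genuine convolution on $\R^m$ with coordinate derivatives, where the Leibniz-type identity (\ref{hinein}) lets me route derivatives onto whichever factor I please: differentiating $\gamma*\eta$ by left-invariant fields lands the derivatives on $\eta$, so high-order smoothness of the product comes entirely from $\eta$.

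The core construction is then a one-dimensional oscillation argument transplanted into the chart. I would take $\eta$ of the form $\eta(x)=\phi(x)\cos(n\langle\xi,x\rangle)$ for a fixed bump $\phi$ supported in $K$ and a frequency parameter $n\to\infty$ in a fixed direction $\xi$; here $\|\eta\|^L_s$ grows like $n^s$, a \emph{fixed} power, while a single extra derivative of order $i>s$ on the product would force growth like $n^i$. I then choose $\gamma$ to be a fixed approximate identity (a nonnegative bump with unit mass, independent of $n$), so that $\gamma*\eta$ is close to $\eta$ and its order-$i$ left derivatives still grow like $n^i$ on $KK$. Consequently $\|\gamma*\eta\|^L_{i,KK}$ grows like $n^i$ whereas $C\,\|\gamma\|^L_r\,\|\eta\|^L_s$ grows only like $n^s=n^{o(i)}$, contradicting the putative inequality for large $n$. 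The construction lives entirely in a small neighbourhood of $1$, which exists precisely because $G$ is non-discrete, so the support condition in $K$ and $KK$ is automatic for $n$ large.

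\textbf{The main obstacle} I anticipate is making the transfer between the left-invariant seminorms and coordinate derivatives fully rigorous, and controlling the convolution $\gamma*\eta$ rather than $\eta$ itself: one must verify that convolving with the fixed approximate identity $\gamma$ does not damp the high-frequency part of $\eta$ enough to kill the $n^i$ growth. Quantitatively, the issue is that $\widehat{\gamma}$ (in the chart) multiplies the frequency-$n$ mode by $\widehat{\gamma}(n\xi)$, which decays; but since $\gamma$ is \emph{fixed} and I am free to rescale it, I would instead take $\gamma$ itself to depend mildly on $n$ (e.g.\ rescaled to width $\sim 1/n^\alpha$ with $0<\alpha<1$) so that $\widehat{\gamma}(n\xi)$ stays bounded below while $\|\gamma\|^L_r$ grows only like $n^{\alpha r}$, still a subpower of $n^i$ for suitable $\alpha$. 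Balancing the exponents $\alpha r$ and $s$ against $i$ — choosing $i$ large enough that $i>\alpha r+s$ — is the one place requiring genuine care; everything else is routine bookkeeping with (\ref{hinein}), Lemma~\ref{normtrans}, and the frame estimates of Lemma~\ref{topframe}.
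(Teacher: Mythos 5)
There is a genuine gap, and it sits exactly at the point you flag as ``the main obstacle'': your proposed rescaling of $\gamma$ to width $n^{-\alpha}$ with $0<\alpha<1$ does \emph{not} make $\widehat{\gamma}(n\xi)$ bounded below --- it makes it decay faster than any polynomial. Concretely, if $\gamma_n(x)=\gamma_0(n^{\alpha}x)$ in the chart, then $\widehat{\gamma_n}(n\xi)=n^{-\alpha m}\,\widehat{\gamma_0}(n^{1-\alpha}\xi)$, and since $\gamma_0$ is smooth with compact support, $\widehat{\gamma_0}$ decays super-polynomially; as $1-\alpha>0$, this gives $\widehat{\gamma_n}(n\xi)=O\bigl(n^{-\alpha m}n^{-N(1-\alpha)}\bigr)$ for every $N$. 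Geometrically, a mollifier of width $n^{-\alpha}$ with $\alpha<1$ is wide compared with the wavelength $1/n$ of $\eta_n$, so convolution averages the oscillation out instead of preserving it. Consequently $\|\gamma_n*\eta_n\|^L_{i,KK}\lesssim n^{i}\,|\widehat{\gamma_n}(n\xi)|\to 0$ for each fixed $i$: the quantity you need to blow up actually tends to zero, and no choice of $i$ produces a contradiction. The construction can be repaired by taking $\alpha\geq 1$ (width at most comparable to the wavelength) and arranging $\widehat{\gamma_0}(\xi)\not=0$; then the frequency-$n$ amplitude of $\gamma_n*\eta_n$ is comparable to $n^{-\alpha m}$, and the estimate is violated once $i>\alpha(m+r)+s$. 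Note that the dimension $m$ must enter the threshold; your exponent count $i>\alpha r+s$ overlooks the mass factor $n^{-\alpha m}$ as well.

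For comparison, the paper avoids Fourier analysis and the damping issue altogether by a two-sided scaling argument: in a chart with $\phi(x^{-1})=-\phi(x)$ it takes shrinking bumps $\tilde{\gamma}_t(x)=t^{r+1}h(x/t)$ and $\tilde{\eta}_t(x)=t^{s+1}h(x/t)$, so that $\gamma_t\to0$ in $C^r_K(G)$ and $\eta_t\to0$ in $(C^\infty_K(G),\|.\|^L_s)$, while the mixed seminorm $\|\gamma_t*\eta_t\|^{R,L}_{n,n}$ grows like $t^{m+r+s+2-2n}\to\infty$ once $2n>m+r+s+2$: equation (\ref{hinein}) routes $n$ right-invariant derivatives onto $\gamma_t$ and $n$ left-invariant derivatives onto $\eta_t$, and evaluation at the identity yields the positive limit $\rho(0)\int_A(D_v^nh)^2\,d\lambda_\cg$. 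Note also that the paper never identifies group convolution with Euclidean convolution in the chart (these differ for non-abelian $G$); it only evaluates $\gamma*\eta$ and its derivatives at the single point $1$, where the symmetric chart turns the integral into $\int_Q\tilde{\gamma}(y)\tilde{\eta}(-y)\rho(y)\,d\lambda_\cg(y)$. Your reduction to ``genuine convolution on $\R^m$'' would need a comparable device to be made rigorous.
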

\begin{proof}
Suppose that the map (\ref{suppcts})
was continuous -- we shall derive a contradiction.
Let $\phi\colon P\to Q\sub \cg$ be a chart for $G$
around~$1$, such that $\phi(1)=0$, $P=P^{-1}$, $d\phi|_\cg=\id_\cg$
and $\phi(x^{-1})=-\phi(x)$ for all $x\in P$ (for example, a logarithmic chart).
After shrinking~$K$, we may assume that
$K=\phi^{-1}(A)$
for some compact $0$-neighbourhood
$A\sub Q$ with $[{-1},1]A\sub A$.
In particular, $K\sub P$.
Let $m>0$ be the dimension of $G$,
$\lambda_\cg$ be a Haar measure on~$(\cg,+)$,
and $\lambda_\cg|_Q$ be its restriction
to a measure on $(Q,\cB(Q))$.
Then the image measure
$\phi_*(\lambda_G|_P)$ is of the form $\rho \, d\lambda_\cg|_Q$
with a smooth function $\rho\colon Q\to \,]0,\infty[$.
Given $\gamma\in C^\infty_K(G)$,
let $\tilde{\gamma}:=\gamma\circ \phi^{-1} \in C^\infty_A(Q)$.
Then, for all $\gamma,\eta\in C^\infty_K(G)$,
\begin{equation}\label{ans1}
(\gamma*\eta)(0)=\int_Q \tilde{\gamma}(y)\tilde{\eta}({-y})\,\rho(y)\,d\lambda_\cg(y)\,.
\end{equation}
If $Y$ is a vector field on $G$ and $\theta:=d\phi\, \circ Y\circ \phi^{-1}\in C^\infty(Q,\R^m)$
its representative with respect to the chart~$\phi$,
then
\begin{equation}\label{ans2}
(D_Y.\gamma)\tilde{\;}=D_\theta.\wt{\gamma},
\end{equation}
where $D_Y(\gamma)$ is as in~(\ref{defnDX}), and $D_\theta.\wt{\gamma}:=d\wt{\gamma}\circ (\id_Q,\theta)$.
Choose $n\in 2\N$ so large that
$m+r+s+2-2n<0$.
Pick $h\in C^\infty_A(\cg)$ such that
$h\not=0$ and $h(x)=h(-x)$ for all $x\in \cg$.
There is $v\in A\setminus \{0\}$ such that $h(v)\not=0$.
Then $D_v^nh\not=0$.
To see this,
find $c>1$
such that $]{-c},c[\, v\sub Q$
but $]{-c},c[\, v\not\sub A$.
Then $g\colon ]{-c},c[\,\to \R$, $t\mto h(tv)$
is a compactly supported non-zero function,
whence $g^{(n)}(t_0)\not=0$ for some $t_0\in \,]{-c},c[$
and thus $D^n_vh(t_0v)\not=0$.
For $t\in \,]0,1]$,
define $\tilde{\gamma}_t,\tilde{\eta}_t \in C^\infty_{tA}(Q)\sub
C^\infty_A(Q)$ via
\[
\tilde{\gamma}_t(x):=t^{r+1}h(x/t)\quad\mbox{and}\quad
\tilde{\eta}_t(x):=t^{s+1}h(x/t)
\]
for $x\in Q$. Define $\gamma_t,\eta_t\in C^\infty_K(G)$
via $\gamma_t(x):=\tilde{\gamma}_t(\phi(x))$
and $\eta_t(x):=\tilde{\eta}_t(\phi(x))$
if $x\in P$, $\gamma_t(x):=\eta_t(x):=0$
if $x\in G\setminus K$.\\[2.5mm]
{\bf Claim:} \emph{$\gamma_t\to 0$ in $C^r_K(G)$
and $\eta_t\to 0$ in $(C^\infty_K(G),\|.\|_s^L)$ as $t\to 0$.
But $\|\gamma_t*\eta_t\|^{R,L}_{n,n}\to\infty$ as $t\to 0$,
whence $\gamma_t*\eta_t\not\to 0$ in $C^\infty_{KK}(G)$.}\\[2.5mm]
Therefore the map in (\ref{suppcts}) is not continuous, contradiction.\\[2.5mm]
To prove the claim, we first note that
\begin{equation}\label{nueq}
(D_{\xi_j}\ldots D_{\xi_1}.\tilde{\gamma}_t)(x)
=\sum_{i=1}^j t^{r+1-i}g_i(t,x)
\end{equation}
for $j\in \N$, $\xi_1\ldots, \xi_j\in C^\infty(Q,\cg)$ and $x\in Q$,
where
\[
g_j(t,x)=d^{(j)}h(x/t,\xi_1(x),\ldots, \xi_j(x))
\]
and $g_i(t,x)$ for $i<j$ is a sum of terms of the form
$d^{(i)} h (x/t,f_1(x),\ldots, f_i(x))$
with suitable smooth functions $f_1,\ldots,f_i\in C^\infty(Q,\cg)$.
This can be\linebreak
established by a straightforward induction,
using that application of $D_\xi$
to $x\mto d^{(i)} h (x/t,f_1(x),\ldots, f_i(x))$ yields\footnote{Recall
that $d^{(i)}h(x,.)\colon E^i\to F$
is $i$-linear (see, e.g., 
\cite{RES}, \cite{Ham}, \cite{Mil}).}
\begin{eqnarray*}
\lefteqn{\frac{1}{t}d^{(i+1)} h (x/t,f_1(x),\ldots, f_i(x), \xi(x)) }\qquad\qquad\\
& & \hspace*{-15mm}+d^{(i)} h (x/t,(D_\xi f_1)(x),\ldots, f_i(x))
+\cdots +
d^{(i)} h (x/t,f_1(x),\ldots, D_\xi(f_i)(x))
\end{eqnarray*}
for $\xi\in C^\infty(Q,\cg)$. A similar description (with $s$ in place of $r$)
can be given for $D_{\xi_j}\ldots D_{\xi_1}.\tilde{\eta}_t$.
We find it useful to abbreviate $h^{(i)}:=d^{(i)}h$ and\footnote{As
usual, for normed spaces $(E_1,\|.\|_1),\ldots,(E_i,\|.\|_i)$
and $(F,\|.\|_F)$
and a continuous $i$-linear map $B\colon E_1\times \cdots \times E_i\to F$,
we define $\|B\|_{ \op}$ as the supremum of $\|B(x_1,\ldots, x_i)\|_F$,
where $x_j\in E_j$ with $\|x_j\|_j\leq 1$
for $j=1,\ldots, i$.}\[
\|h^{(i)}\|_{ \op,\infty}:=\sup\{\|h^{(i)}(y,.)\|_{ \op }\colon y\in \cg\}\,.
\] 
Note that $h^{(i)}(x/t, f_1(x),\ldots, f_i(x))$
vanishes for $x$ outside $tA$ and hence for $x\not\in A$,
and that its norm is bounded by
\[
\|h^{(i)}\|_{\op,\infty } \,\|f_1\|_\infty\cdots \|f_i\|_\infty\,,
\]
irrespective of $t$ and $x$. A similar estimate is available for $g_j(t,x)$.
Also, $\|\tilde{\gamma}_t\|_\infty\leq \|h \|_\infty t^{r+1}$.
Hence, if $j\in \{0,\ldots, r\}$,
we can find $C>0$ such that
\[
\|D_{\xi_j}\ldots D_{\xi_1}.\tilde{\gamma}_t\|_\infty
\leq \sum_{i=1}^j t^{r+1-i}C\leq jtC\,.
\]
As a consequence,
\[
\max_{|\alpha|\leq r}\|\partial^\alpha \tilde{\gamma}_t\|_\infty\to 0
\]
as $t\to 0$ and thus $\tilde{\gamma}_t\to 0$ in $C^r_A(Q)$,
entailing that $\gamma_t\to 0$ in $C^r_K(G)\isom C^r_K(P)$.
Likewise, $\eta_t\to 0$ in $C^s_K(G)$,
whose topology can be described
by the norm $\|.\|^L_s$,
and thus $\eta_t\to 0$ in $(C^\infty_K(G),\|.\|_s^L)$.\\[2.5mm]
Next, let $X$ be the right invariant vector field on $G$
with $X(1)=v$, and $Y$ be the left invariant vector field
with $Y(1)=v$.
Let $\xi:=d\phi\circ X\circ \phi^{-1}$ and $\zeta:=d\phi\circ Y\circ\phi^{-1}$
be the local representatives.
By (\ref{ans1}) and (\ref{ans2}),
\[
(D_X^nD_Y^n(\gamma_t*\eta_t))(0)
=
(D_X^n\gamma_t* D_Y^n\eta_t)(0)
\! =\!
\int_Q (D_\xi^n\tilde{\gamma}_t)(y)(D_\zeta^n\tilde{\eta}_t)({-y})\,\rho(y)\,d\lambda_\cg(y).
\]
Write $(D_\xi^n\tilde{\gamma}_t)(x)=\sum_{i=1}^n t^{r+1-i}g_i(t,x)$
and
$(D_\zeta^n\tilde{\eta}_t)(x)=\sum_{j=1}^n t^{s+1-j}h_j(t,x)$ as in~(\ref{nueq}).
Then
\[
(D_X^n D_Y^n(\gamma_t*\eta_t))(0)=t^{m+r+s+2-2n}
\left(t^{-m}\int_Q g_n(t,y)h_n(t,-y)\,\rho(y)\,d\lambda_\cg(y)
+R(t)\right)
\]
where $R(t)$ is the sum of the terms $t^{2n-i-j}t^{-m}\int_Qg_i(t,y)h_j(t,-y)\rho(y)\,d\lambda_\cg(y)$
with $i,j\in \{1,\ldots, n\}$ and $(i,j)\not=(n,n)$.
For these $(i,j)$,
\[
\hspace*{-10mm}\left|t^{-m}\!\!\int_Qg_i(t,y)h_j(t,-y)\rho(y)\,d\lambda_\cg(y)\right|\hspace*{55mm}\vspace{-5.3mm}
\]
\begin{eqnarray*}
&=\!&\!
\left|t^{-m}\!\!\!\int_Qh^{(i)}(y/t, f_1(y),\ldots, f_i(y))h^{(j)}(-y/t, k_1(-y),\ldots, k_j(-y)))\rho(y)\,d\lambda_\cg(y)\right| \\
&=\!&\!
\left|\int_{Q/t}h^{(i)}(z, f_1(tz),\ldots, f_i(tz))h^{(j)}(-z, k_1(-tz),\ldots, k_j(-tz)))\rho(tz)\,d\lambda_\cg(z)\right| \\
&=\!&\!
\left|\int_A h^{(i)}(z, f_1(tz),\ldots, f_i(tz))h^{(j)}(-z, k_1(-tz),\ldots, k_j(-tz)))\rho(tz)\,d\lambda_\cg(z)\right| \\
& \leq\! &\! \|h^{(i)}\|_{\op,\infty}\|h^{(i)}\|_{\op,\infty}\|f_1\|_\infty\cdots\|f_i\|_\infty\|k_1\|_\infty\cdots\|k_j\|_\infty
\|\rho|_A\|_\infty\lambda_\cg(A)
\end{eqnarray*}
(using the substitution $y/t=z$ to obtain the second equality),
where the final estimate is independent of $t$. Since $2n-i-j\geq 1$
and thus $t^{2n-i-j}\to 0$ as $t\to 0$,
we deduce that $R(t)\to 0$. Similarly, substituting $z=y/t$, we get
\begin{eqnarray}
\lefteqn{t^{-m}\int_Q g_n(t,y)h_n(t,-y)\,\rho(y)\,d\lambda_\cg(y)}\notag\\
\hspace*{-25mm}&=&
t^{-m}\int_Q h^{(n)}(y/t,\xi(x),\ldots,\xi(x)))h^{(n)}(-y/t,\zeta(-y),\ldots,\zeta(-y))\,\rho(y)\,d\lambda_\cg(y)\notag\\
&=& \int_{Q/t} h^{(n)}(z,\xi(tz),\ldots,\xi(tz)))h^{(n)}(-z,\zeta(-tz),\ldots,\zeta(-tz))\,\rho(tz)\,d\lambda_\cg(z)\notag\\
&=& \int_A h^{(n)}(z,\xi(tz),\ldots,\xi(tz)))h^{(n)}(-z,\zeta(-tz),\ldots,\zeta(-tz))\,\rho(tz)\,d\lambda_\cg(z)\,\label{hereconv}
\end{eqnarray}
which tends to
\begin{eqnarray*}
\lefteqn{\int_A h^{(n)}(z,\xi(0),\ldots,\xi(0)))h^{(n)}(-z,\zeta(0),\ldots,\zeta(0))\,\rho(0)\,d\lambda_\cg(z)}\\
&=& \int_A h^{(n)}(z,v,\ldots,v))h^{(n)}(-z,v,\ldots,v)\,\rho(0)\,d\lambda_\cg(z)\\
&=& \rho(0) \int_A (D_v^nh(z))^2 \,d\lambda_\cg(z)=:a >0
\end{eqnarray*}
as $t\to 0$. Note that the integrand in (\ref{hereconv}) is continuous for $(t,y)\in [0,1]\times A$,
whence Lemma~\ref{ctspar} applies.
Since $R(t)\to 0$, there exists $\tau\in \,]0,1]$ such that $|R(t)|\leq a/2$ for all $t\in \,]0,\tau]$.
Then
$(D_X^n D_Y^n(\gamma_t*\eta_t))(0)\geq t^{m+r+s+2-2n}\frac{a}{2}$
for all $t\in \,]0,\tau]$, which tends to $\infty$ as $t\to 0$.
Hence also $\|\gamma_t*\eta_t\|^{R,L}_{n,n}\geq |(D_X^n D_Y^n(\gamma_t*\eta_t))(0)|$
goes to $\infty$ as $t\to 0$, and the claim is established.
\end{proof}
\section{Convolution on {\boldmath$\sigma$}-compact groups}\label{secsigma}
In this section, we complete the proof
of Theorem~A. As we have already
seen in Sections~\ref{aness} and \ref{bness}
that conditions (a) and (b) of the theorem
are necessary for continuity of~$\beta$,
it suffices to consider the case
that (a) and (b) are already satisfied,
and to show that $\beta$ is continuous if and only if
condition (c) of the theorem is satisfied.
In parallel,
we shall establish a result concerning discrete groups.
To formulate it, let $(G,r,s,t,b)$ be as in  the introduction.
If $G$ is discrete, then $C^p_c(G,H)=\bigoplus_{g\in G}H=:H^{(G)}$
with the locally convex direct sum topology, for each $p\in \N_0\cup\{\infty\}$
and locally convex space~$H$. We show:
\begin{prop}\label{discrca}
If $G$ is an infinite discrete group, then the map\linebreak
$\beta\colon E_1^{(G)}\times E_2^{(G)}\to F^{(G)}$, $\beta(\gamma,\eta):=\gamma *_b\eta$
is continuous if and only if $G$ is countable and $b\colon E_1\times E_2\to F$
satisfies product estimates.
\end{prop}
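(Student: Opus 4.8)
The plan is to reduce continuity of the bilinear map $\beta$ to a single seminorm estimate and then to match the direct-sum seminorms on the three sides so that the product-estimate condition \eqref{prodest} appears verbatim. Throughout I use that for discrete $G$ the Haar measure is counting measure, so that, writing $\gamma_g:=\gamma(g)$, the convolution is the finite sum $(\gamma*_b\eta)_x=\sum_{yz=x}b(\gamma_y,\eta_z)$; that a bilinear map into a locally convex space is continuous iff for every continuous seminorm $Q$ on the target there are continuous seminorms $P,P'$ on the two domains with $Q(\beta(\gamma,\eta))\le P(\gamma)P'(\eta)$ for all $\gamma,\eta$; and that, by Remark~\ref{semisum}, the continuous seminorms on a direct sum $\bigoplus H$ are generated by the sum seminorms $x\mapsto\sum_g q_g(x_g)$, while for a countable index set the box seminorms $x\mapsto\max_g q_g(x_g)$ already suffice. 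I write $\iota_g$ for the canonical inclusion of a summand into the respective direct sum. The necessity of countability is immediate: a discrete group is a $0$-dimensional Lie group and is locally compact, and an uncountable discrete group is not $\sigma$-compact, so Proposition~\ref{disco} makes $\beta$ discontinuous; hence continuity forces $G$ countable.

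\emph{Necessity of product estimates.} Still assuming $\beta$ continuous, I would first produce sequences $(a_i)_{i\in\N}$ and $(b_j)_{j\in\N}$ in $G$ for which $(i,j)\mapsto a_ib_j$ is injective --- the one genuinely group-theoretic step, handled by a greedy construction: having chosen $a_1,\dots,a_n,b_1,\dots,b_n$ with pairwise distinct products, the requirement that $a_{n+1}$ (and then $b_{n+1}$) create no new coincidence excludes only finitely many values, which the infinite group $G$ can avoid. Given a double sequence $(p_{i,j})_{i,j\in\N}$ of continuous seminorms on $F$, let $Q(z):=\sum_g q_g(z_g)$ on $F^{(G)}$ with $q_{a_ib_j}:=p_{i,j}$ and $q_g:=0$ otherwise; this is well defined by injectivity and continuous by Remark~\ref{semisum}. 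Continuity yields continuous seminorms $P$ on $E_1^{(G)}$ and $P'$ on $E_2^{(G)}$ with $Q(\beta(\gamma,\eta))\le P(\gamma)P'(\eta)$. Testing on $\gamma=\iota_{a_i}(x)$ and $\eta=\iota_{b_j}(y)$ gives $\beta(\gamma,\eta)=\iota_{a_ib_j}(b(x,y))$, so $Q(\beta(\gamma,\eta))=p_{i,j}(b(x,y))$, and the continuous seminorms $p_i(x):=P(\iota_{a_i}(x))$, $q_j(y):=P'(\iota_{b_j}(y))$ satisfy $p_{i,j}(b(x,y))\le p_i(x)q_j(y)$ for all $i,j,x,y$, which is exactly \eqref{prodest}.

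\emph{Sufficiency.} Assume $G$ countable and $b$ admits product estimates, and fix an enumeration $G=\{g_i:i\in\N\}$. Since $G$ is countable it is enough to bound $Q(\beta(\gamma,\eta))$ for box seminorms $Q(z)=\max_g q_g(z_g)$. Feeding the double sequence $p_{i,j}:=q_{g_ig_j}$ into \eqref{prodest} produces continuous seminorms $p_i$ on $E_1$ and $q'_j$ on $E_2$ with $q_{g_ig_j}(b(u,v))\le p_i(u)q'_j(v)$. Then for each $x\in G$,
\[
q_x\Big(\sum_{g_ig_j=x}b(\gamma_{g_i},\eta_{g_j})\Big)\le\sum_{g_ig_j=x}p_i(\gamma_{g_i})\,q'_j(\eta_{g_j})\le\Big(\sum_i p_i(\gamma_{g_i})\Big)\Big(\sum_j q'_j(\eta_{g_j})\Big),
\]
where the last step discards the constraint $g_ig_j=x$ (all summands being nonnegative) and factors the double sum. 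The bound is independent of $x$, so with the continuous seminorms $P(\gamma):=\sum_i p_i(\gamma_{g_i})$ and $P'(\eta):=\sum_j q'_j(\eta_{g_j})$ (Remark~\ref{semisum}) one gets $Q(\beta(\gamma,\eta))\le P(\gamma)P'(\eta)$, whence $\beta$ is continuous.

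The main obstacle is not any single computation but arranging the bookkeeping so that the estimate closes: one must put the \emph{box} (max) seminorm on the target $F^{(G)}$ and \emph{sum} seminorms on the domains $E_1^{(G)}$, $E_2^{(G)}$, so that after discarding the convolution constraint the double sum factorizes; and dually, in the necessity direction, one must make the positions $a_ib_j$ distinct so that a single target seminorm can isolate each $p_{i,j}(b(x,y))$ separately. The greedy construction of such $(a_i),(b_j)$ is the only place where the group structure is used in an essential way.
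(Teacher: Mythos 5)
Your proof is correct, and it takes a noticeably more self-contained route than the paper's. The reduction to countable $G$ via Proposition~\ref{disco} is exactly the paper's first step; but the paper then obtains the countable discrete case as a by-product of its general Section~\ref{secsigma} treatment of $\sigma$-compact, non-compact groups, rather than arguing discretely. For necessity, the paper invokes Lemma~\ref{lalofin} to produce sequences with $(g_iVh_jV)_{(i,j)}$ locally finite, uses $L^1$-normalized bump functions and the seminorms $\|\cdot\|_{p,L^1}$, and builds its $0$-neighbourhood $Z$ from separate conditions indexed by $(i,j)$ (via Lemma~\ref{lcsum}); your greedy construction of $(a_i)$, $(b_j)$ with $(i,j)\mapsto a_ib_j$ injective is the discrete counterpart of Lemma~\ref{lalofin} --- you need the (slightly stronger) injectivity because your single sum seminorm $Q$ must be well defined, whereas the paper's neighbourhood tolerates coincidences among the products. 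The substantive difference is in sufficiency: the paper verifies the asymmetric hypotheses (\ref{gclaim1})--(\ref{gclaim2}) of the direct-sum continuity criterion \cite[Corollary~2.5]{Glo} (using Lemmas~\ref{enoughsn} and~\ref{lasimpe}, with the $i<j$ versus $i\geq j$ case split forced by distributing derivatives between the factors), while you avoid that external criterion altogether: with box seminorms on $F^{(G)}$ and sum seminorms on $E_1^{(G)}$ and $E_2^{(G)}$, dropping the constraint $g_ig_j=x$ lets the double sum factor, and the standard seminorm characterization of continuity of bilinear maps finishes the argument. Your approach buys brevity and elementarity, exploiting that all $C^r$ and Haar-measure technicalities trivialize for discrete $G$; the paper's approach buys uniformity, since one argument simultaneously handles Lie groups and all $t\leq r+s$, where a fully two-sided factorization like yours is unavailable and the one-sided criterion of \cite[Corollary~2.5]{Glo} is genuinely needed.
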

We need only prove 
Proposition~\ref{discrca} for countable~$G$
(as the discontinuity of~$\beta$
for uncountable~$G$ was already established
in Proposition~\ref{disco}).
\begin{la}\label{lalofin}
Let $G$ be a $\sigma$-compact, non-compact,
locally compact group
and $V\sub G$ be a compact identity neighbourhood.
Then there are sequences $(g_i)_{i\in \N}$ and $(h_j)_{j\in \N}$ in $G$,
such that the family $(g_iVh_jV)_{(i,j)\in \N^2}$ is locally finite.
\end{la}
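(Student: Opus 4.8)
The plan is to build the two sequences by an \emph{adaptive} diagonal induction, arranging that the products $g_iVh_jV$ are forced out of every compact set as soon as $\max(i,j)$ is large. A naive attempt with fixed, rapidly growing sequences is doomed: in a non-abelian group the right factor $h_j$ can ``cancel'' $g_i$ and drag $g_iVh_jV$ back toward the identity, so $h_j$ must be chosen in awareness of the $g_i$ already selected. This adaptivity, fed at every stage by the non-compactness of $G$, is the heart of the argument. At the outset we may assume $V=V^{-1}$ and $e\in V$, since enlarging $V$ only enlarges each $g_iVh_jV$ and hence only strengthens the assertion of local finiteness.

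First I would fix an exhaustion: choose compact symmetric sets $C_0\sub C_1\sub\cdots$ with $e\in C_0$, $V\sub C_0$, $C_n=C_n^{-1}$ and $\bigcup_n\operatorname{int}(C_n)=G$. Such a chain exists because $G$ is $\sigma$-compact and locally compact (symmetrize a standard exhaustion $K_n$ with $K_n\sub\operatorname{int}(K_{n+1})$). Since every compact $L\sub G$ is covered by finitely many of the open sets $\operatorname{int}(C_n)$, we get $L\sub C_N$ for some $N$; thus it suffices to arrange, for each $N$, that $g_iVh_jV\cap C_N\neq\emptyset$ for only finitely many $(i,j)$. I will guarantee the stronger escape property
\[
(\star)\qquad \max(i,j)=n\ \Longrightarrow\ g_iVh_jV\cap C_{n-1}=\emptyset .
\]
Granting $(\star)$, local finiteness is immediate: if $g_iVh_jV$ meets $L\sub C_N$ and $\max(i,j)=n$, then $C_N\sub C_{n-1}$ would force an empty intersection unless $n\le N$, so every offending pair satisfies $\max(i,j)\le N$ and there are finitely many of them.

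The construction proceeds in stages, at stage $n$ choosing first $g_n$ and then $h_n$. The computational key is to recast the collision condition as a membership condition: since $V=V^{-1}$,
\[
g_iVh_jV\cap C_{n-1}\neq\emptyset \iff g_i\in C_{n-1}Vh_j^{-1}V \iff h_j\in Vg_i^{-1}C_{n-1}V,
\]
and each set on the right is compact. Hence at stage $n$ I pick $g_n$ outside the compact set $\bigcup_{j<n}C_{n-1}Vh_j^{-1}V$ (this handles the new pairs $(n,j)$ with $j<n$), and then pick $h_n$ outside the compact set $\bigcup_{i\le n}Vg_i^{-1}C_{n-1}V$ (handling the pairs $(i,n)$ with $i\le n$, including $(n,n)$). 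Both choices are possible exactly because $G$ is non-compact, so the complement of a compact set is non-empty. Since every pair with $\max(i,j)=n$ has either $i=n$ and $j<n$ (excluded by the choice of $g_n$) or $j=n$ and $i\le n$ (excluded by the choice of $h_n$), property $(\star)$ holds, completing the proof.

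The step I expect to be the main obstacle is the very need for adaptivity: recognizing that no fixed product formula or growth rate for the sequences can work in a general non-abelian group, and that one must instead choose $h_n$ against the already-fixed $g_i$ (and $g_n$ against the already-fixed $h_j$), invoking non-compactness afresh at each stage. Once the collision condition is converted into the compact membership sets above, the induction itself is routine.
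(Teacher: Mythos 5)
Your proof is correct and follows essentially the same strategy as the paper's: an inductive construction in which, at each stage, $g_n$ is chosen outside a compact set built from the exhaustion and the previously chosen $h_j$, and then $h_n$ outside a compact set built from the $g_i$ (including the fresh $g_n$), with non-compactness invoked at each step. The only cosmetic differences are your preliminary symmetrization of $V$ and the index shift in the escape property ($C_{n-1}$ versus the paper's $K_{i\vee j}$); the exclusion sets and the verification are the same.
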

\begin{proof}
Since $G$ is locally compact and $\sigma$-compact, there exists
a sequence $(K_i)_{i\in\N}$ of compact subsets of $G$ such that
$G=\bigcup_{i\in \N}K_i$ and $K_i\sub K_{i+1}^0$, for all $i\in \N$.
We may assume that $K_1=\emptyset$.
It suffices to find sequences $(g_i)_{i\in \N}$ and $(h_j)_{j\in\N}$ in $G$
such that
\begin{equation}\label{strg}
g_iVh_jV\cap K_{i\vee j}=\emptyset
\end{equation}
for all $i,j\in \N$, where $i\vee j$ denotes the maximum of $i$ and $j$.
Indeed, if $K\sub G$ is compact,
then $K\sub K_n$ for some $n\in\N$
and thus $K\cap g_iVh_jV=\emptyset$
unless $i,j\in \{1,\ldots, n-1\}$ (which is a finite set).
To find such sequences,
we make an arbitrary choice of $g_1, h_1\in G$.
Let $n\in \N$ now and assume that $g_i,h_j$ have been chosen
for $i,j\in \{1,\ldots, n\}$ such that (\ref{strg})
holds.
Then the subset\vspace{-1.3mm}
\[
P:=\bigcup_{j=1}^n \, K_{n+1}V^{-1}h_j^{-1}V^{-1}\vspace{-2mm}
\]
of $G$ is compact. As $G$ is non-compact,
we find $g_{n+1}\in G\setminus P$.
Also
$Q:=\bigcup_{i=1}^{n+1}\, V^{-1}g_i^{-1}K_{n+1}V^{-1}$
is compact, whence we find $h_{n+1}\in G\setminus Q$.
Then (\ref{strg}) holds for all $i,j \in \{1,\ldots,n+1\}$.
We need only check this if $i=n+1$ or $j=n+1$.
If $j=n+1$, then
$h_j=h_{n+1}\not\in Q$
and thus $g_iVh_jV\cap K_{n+1}=\emptyset$.
If $j\leq n$ and $i=n+1$, then $g_i=g_{n+1}\not\in P$
and hence $g_iVh_jV\cap K_{n+1}=\emptyset$.
\end{proof}
We shall use the seminorm $\|.\|_{p,L^1}$ on $C^0_c(G,E)$
(where $p$ is a continuous seminorm on~$E$), defined
via $\|\gamma\|_{p,L^1}:=\int_Gp(\gamma(y))\,d\lambda_G(y)$.
For each compact subset $K\sub G$,
we have $\|\gamma\|_{p,L^1}\leq \lambda_G(K)\|\gamma\|_{p,\infty}$
for all $\gamma\in C^0_K(G,E)$.
Hence $\|.\|_{p,L^1}$
is continuous on $C^0_K(G,E)$ and hence also on $C^0_c(G,E)$.\\[3mm]
\emph{Necessity of product estimates.}
Let $(G,r,s,t,b)$ be as in the introduction.\linebreak
Assume that~$G$ is not compact,
and asume that the conditions (a) and (b)
from Theorem~A are satisfied.\footnote{If $G$ is discrete,
these conditions are equivalent to countability of~$G$.}
Also, assume that $\beta$ is continuous.
Pick a relatively compact, open identity neighbourhood $V\sub G$.
By Lemma~\ref{lalofin}, there are sequences $(x_i)_{i\in \N}$
and $(y_j)_{j\in\N}$ in $G$ such that the family $(V_{i,j})_{(i,j)\in \N^2}$
of open sets $V_{i,j}:=x_iVy_jV$ is locally finite.
Pick $g_i,h_j\in C^r_c(G)$
such that $g_i,h_j\geq 0$,
$K_i:=\Supp(g_i)\sub x_iV$,
$L_j:=\Supp(h_j)\sub y_jV$,
and $\|g_i\|_{L^1}=\|h_j\|_{L^1}=1$.
Pick $h_{i,j}\in C^r_c(G)$
such that $h_{i,j}\geq 0$, $\Supp(h_{i,j})\sub V_{i,j}$
and $h_{i,j}|_{K_iL_j}=1$.
For $i,j\in \N$, let $p_{i,j}$ be a continuous seminorm on $F$.
Let $Z$ be the set of all
$\gamma\in C^t_c(G,F)$ such that
\[
(\forall i,j\in \N)\quad \|h_{i,j}\cdot \gamma\|_{p_{i,j},L^1}<1\,.
\]
Lemma~\ref{lcsum} entails that $Z$ is an open $0$-neighbourhood in
$C^t_c(G,F)$.
As $\beta$ is assumed continuous, there exist
open $0$-neighbourhoods $X\sub C^r_c(G,E_1)$
and $Y\sub C^s_c(G,E_2)$ such that $\beta(X\times Y)\sub Z$.
Using Lemma~\ref{scavect},
for each $i\in \N$ we find a continuous seminorm
$p_i$ on $E_1$
such that $g_i \wb{B}^{p_i}_1(0) \sub X$.
Likewise, for each $j\in \N$ there is a continuous
seminorm $q_j$ on $E_2$ such that $h_j\wb{B}^{q_j}_1(0) \sub Y$.
For $v\in \wb{B}^{p_i}_1(0)$ and $w\in \wb{B}^{q_j}_1(0)$,
we then have $\gamma:=\beta(g_i v,h_j w)\in Z$,
and thus
$\|\gamma\|_{p_{i,j},L^1}
= \|h_{i,j}\cdot \gamma\|_{p_{i,j},L^1}<1$
(noting that $\Supp(\gamma)\sub K_iL_j$
on which $h_{i,j}=1$).
Therefore
\begin{eqnarray*}
1 & > & \|\gamma\|_{p_{i,j},L^1}\; =\; \int_G p_{i,j}(\gamma(x))\,d\lambda_G(x)\\
&=& \int_G \int_G p_{i,j}(b(g_i(y)v,h_j(y^{-1}x)w)\,d\lambda_G(y)\, d\lambda_G(x)\\
&=&  p_{i,j}(b(v,w))\int_G \int_G g_i(y)h_j(y^{-1}x)\,d\lambda_G(y)\, d\lambda_G(x)\\
&=& p_{i,j}(b(v,w))\, \|g_i\|_{L^1}\|h_j\|_{L^1}
\;=\;p_{i,j}(b(v,w))\, .
\end{eqnarray*}
Hence $b(\wb{B}^{p_i}_1(0)\times \wb{B}^{q_j}_1(0))\sub \wb{B}^{p_{i,j}}_1(0)$,
entailing that $p_{i,j}(b(x,y))\leq p_i(x)q_j(y)$ for all $i,j\in \N$.
Thus $b$ satisfies product estimates.
\begin{numba}\label{reuseelwh}
\emph{Sufficiency of product estimates.}
As before, let $(G,r,s,t,b)$ be as in the introduction,
and assume that conditions (a) and (b)
from Theorem~A are satisfied.
Also, assume that $b$ satisfies product estimates
(condition\,(c)).
We show that $\beta$ is continuous.
To this end, let $(h_i)_{i\in \N}$
be a locally finite partition of unity on $G$
(smooth if $G$ is a Lie group; continuous
otherwise),
with compact supports $K_i:=\Supp(h_i)$.
For all $i,j\in \N$,
the convolution map $f_{i,j}\colon C^r_{K_i}(G,E_1)\times
C^s_{K_j}(G,E_2)\to C^t_c(G,F)$ associated with
$b$ is then continuous (see Lemmas~\ref{convcts}
and \ref{diffbar}).
We claim that the hypotheses of \cite[Corollary~2.5]{Glo}
are satisfied.
If this is true, then the bilinear map
\[
f\colon \bigoplus_{i\in \N}C^r_{K_i}(G,E_1)\times \bigoplus_{j\in \N}C^s_{K_j}(G,E_2)\to C^t_c(G,F)
\]
taking $((\gamma_i)_{i\in \N},(\eta_j)_{j\in \N})$ to $\sum_{(i,j)\in \N^2}f_{i,j}(\gamma_i,\eta_j)$
is continuous (by the latter corollary).
Since also the linear map
$\Phi\colon C^r_c(G,E_1)\to\bigoplus_{i\in\N} C^r_{K_i}(G,E_1)$,
$\gamma\mto (h_i\cdot \gamma)_{i\in \N}$
and the analogous map
$\Psi\colon C^s_c(G,E_2)\to\bigoplus_{i\in\N} C^s_{K_i}(G,E_2)$
are continuous,
we deduce that $\beta=f\circ (\Phi\times \Psi)$ is continuous.

To prove the claim, let $Q_{i,j}$ be continuous seminorms on $C^t_c(G,F)$
for all $i,j\in \N$.
If $t<\infty$, choose $k,\ell\in \N_0$
with $k\leq r$, $\ell\leq s$ and $k+\ell=t$.
If $t=r=s=\infty$, let $k:=\ell:=0$.
If $i<j$, then there exists a continuous seminorm $P_{i,j}$ on $F$
and $s_{i,j}\in \N_0$ such that $s_{i,j}\leq s$ and
\[
(\forall \gamma\in C^t_{K_iK_j}(G,F))\quad Q_{i,j}(\gamma)\leq \|\gamma\|^{R,L}_{k,s_{i,j},P_{i,j}}
\]
(see Lemma~\ref{enoughsn}).
If $i\geq  j$,
there exists a continuous seminorm $P_{i,j}$ on $F$
and $r_{i,j}\in \N_0$ such that $r_{i,j}\leq r$ and
$Q_{i,j}(\gamma)\leq \|\gamma\|^{R,L}_{r_{i,j},\ell,P_{i,j}}$
for all $\gamma\in C^t_{K_iK_j}(G,F)$.
By hypothesis~(c),
there are continuous seminorms $p_i$ on $E_1$ and $q_j$ on $E_2$
such that $P_{i,j}(b(x,y))\leq p_i(x)q_j(y)$
for all $i,j\in \N$ and all $x\in E_1$, $y\in E_2$.
For $i,j\in \N$, let $P_i:=\|.\|_{k,p_i}^R$
and $Q_j:=\|.\|_{\ell,q_j}^L$.
For $i,j\in\N$ with $i<j$,
let $q_{i,j}:=\lambda_G(K_i)\|.\|_{s_{i,j},q_j}^L$.
Then
\begin{equation}\label{gclaim1}
Q_{i,j}(\gamma*_b\eta)\leq
\|\gamma*_b\eta\|^{R,L}_{k,s_{i,j},P_{i,j}}\leq
\|\gamma\|_{k,p_i}^R\|\eta\|_{s_{i,j},q_j}^L\lambda_G(K_i)
=P_i(\gamma)q_{i,j}(\eta) 
\end{equation}
for all $\gamma\in C^r_{K_i}(G,E_1)$ and
$\eta\in C^s_{K_j}(G,E_2)$,
using Lemma~\ref{lasimpe}.
If $i,j\in\N$ with $i\geq j$,
let $p_{i,j}:=\lambda_G(K_i)\|.\|_{r_{i,j},p_i}^R$.
For $\gamma\in C^r_{K_i}(G,E_1)$ and
$\eta\in C^s_{K_j}(G,E_2)$,
\begin{equation}\label{gclaim2}
Q_{i,j}(\gamma*_b\eta)\leq
\|\gamma*_b\eta\|^{R,L}_{r_{i,j},\ell,P_{i,j}}\leq
\|\gamma\|_{r_{i,j},p_i}^R\|\eta\|_{\ell,q_j}^L\lambda_G(K_i)
= p_{i,j}(\gamma)Q_j(\eta)
\end{equation}
(using Lemma~\ref{lasimpe} again).
By (\ref{gclaim1}) and (\ref{gclaim2}), the claim is established.\vspace{3mm}\Punkt
\end{numba}
\section{Convolution of \boldmath{$C^r$}-maps and {\boldmath$C^s_c$}-maps}\label{secfin}
\begin{prop}\label{nospp}
Let $(G,r,s,t,b)$ be as in the introduction, and
\begin{eqnarray*}
\beta_b\colon C^r_c(G,E_1)\times C^s(G,E_2) & \to & C^t(G,F)\;\;\mbox{and}\\
\theta_b \colon C^r(G,E_1)\times C^s_c(G,E_2) & \to & C^t(G,F)
\end{eqnarray*}
be the convolution maps taking $(\gamma,\eta)$ to $\gamma*_b\eta$.
Then $\beta_b$ and $\theta_b$ are hypocontinuous.
The map $\beta_b$ is continuous if and only if $G$ is compact.
Likewise, $\theta_b$ is continuous if and only if $G$ is compact.
Moreover, the convolution maps
\begin{eqnarray*}
\beta_K\colon C^r_K(G,E_1)\times C^s(G,E_2) & \to & C^t(G,F)\;\;\mbox{and}\\
\theta_K \colon C^r(G,E_1)\times C^s_K(G,E_2) & \to & C^t(G,F)
\end{eqnarray*}
taking $(\gamma,\eta)$ to $\gamma*_b\eta$
are continuous, for each compact subset $K\sub G$.
\end{prop}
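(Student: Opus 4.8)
The plan is to reduce the whole proposition to the joint continuity of the two restricted maps $\beta_K$ and $\theta_K$. Once these are continuous for every compact $K\sub G$, hypocontinuity, the compact case, and the necessity of compactness follow with little extra work, and the statements about $\theta_b,\theta_K$ can be deduced from those about $\beta_b,\beta_{K^{-1}}$: by Lemma~\ref{invo} one has $\theta_b(\gamma,\eta)=(\eta^* *_{b^\vee}\gamma^*)^*$ with $\eta^*\in C^s_c(G,E_2)$ and $\gamma^*\in C^r(G,E_1)$, and since the three involutions $\gamma\mapsto\gamma^*$ are isomorphisms of topological vector spaces (Lemma~\ref{invoaut}), $\theta_b$ (resp.\ $\theta_K$) is continuous if and only if $\beta_{b^\vee}$ (resp.\ $\beta_{K^{-1}}$ formed with $b^\vee$) is, and likewise for hypocontinuity. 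So I would treat only $\beta_b$ and $\beta_K$ explicitly.

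\textbf{Continuity of $\beta_K$.} First I would record the localized analogue of Lemma~\ref{enoughsn}: by Lemma~\ref{topframe} the topology of $C^t(G,F)$ (and of $C^s(G,E_2)$) is given by the seminorms $\gamma\mapsto\|X_j\cdots X_1.\gamma|_{L'}\|_{q,\infty}$ for compact $L'\sub G$, i.e.\ by the localizations $\|\cdot\|^{L}_{j,q,L'}$, $\|\cdot\|^{R}_{j,q,L'}$ and (for finite total order) $\|\cdot\|^{R,L}_{k,\ell,q,L'}$ of the seminorms of Definition~\ref{deflrmrtr}. Since only the values of $\eta$ on the compact set $K^{-1}L'$ enter $(\gamma*_b\eta)|_{L'}$, the differentiation formula \eqref{hinein} of Proposition~\ref{diffbar} together with a single estimate $q(b(x,y))\le p_1(x)p_2(y)$ (available from continuity of $b$) yields the localized form of Lemma~\ref{lasimpe},
\[
\|\gamma*_b\eta\|^{R,L}_{k,\ell,q,L'}\;\le\;\lambda_G(K)\,\|\gamma\|^{R}_{k,p_1}\,\|\eta\|^{L}_{\ell,p_2,K^{-1}L'}\,,
\]
for $\gamma\in C^r_K(G,E_1)$, $\eta\in C^s(G,E_2)$. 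As the right-hand side is a product of a continuous seminorm on $C^r_K(G,E_1)$ and one on $C^s(G,E_2)$, this gives continuity of $\beta_K$ for finite $t=k+\ell$. When $t=\infty$ one has $r=\infty$ or $s=\infty$ (as $t\le r+s$); I would then use the purely right-invariant localized seminorms $\|\cdot\|^R_{j,q,L'}$ (if $r=\infty$) or the left-invariant ones (if $s=\infty$), for which the corresponding one-sided localized estimates again give continuity. Note this step needs \emph{no} product-estimate hypothesis, since $K$ is fixed.

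\textbf{Hypocontinuity and the compact case.} A finite-dimensional Lie group is paracompact (being the topological disjoint union of its connected components, each of which is $\sigma$-compact), so by Lemma~\ref{towardsh}(c) every bounded subset of $C^r_c(G,E_1)$ lies in some $C^r_K(G,E_1)$. For bounded $B_1\sub C^r_K(G,E_1)$, joint continuity of $\beta_K$ and bilinearity give: for a $0$-neighbourhood $W$ in $C^t(G,F)$ pick $0$-neighbourhoods $U,V$ with $\beta_K(U\times V)\sub W$ and $\lambda>0$ with $B_1\sub\lambda U$; then $\beta_K(B_1\times\lambda^{-1}V)\sub\beta_K(\lambda U\times\lambda^{-1}V)=\beta_K(U\times V)\sub W$, so $\{\beta_b(\gamma,\cdot):\gamma\in B_1\}$ is equicontinuous. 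For the other side, a family of linear maps on the direct limit $C^r_c(G,E_1)$ is equicontinuous as soon as its restriction to each step $C^r_K(G,E_1)$ is; and for bounded $B_2\sub C^s(G,E_2)$ the family $\{\beta_K(\cdot,\eta):\eta\in B_2\}$ is equicontinuous on each $C^r_K(G,E_1)$ by the symmetric argument. Thus $\beta_b$ is hypocontinuous. If $G$ is compact, then $C^r_c(G,E_1)=C^r_G(G,E_1)$ and $C^t(G,F)=C^t_c(G,F)$, whence $\beta_b=\beta_G$ is continuous by the previous step.

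\textbf{Discontinuity for non-compact $G$ (the main obstacle).} Assume $G$ is not compact. As in the proof of Proposition~\ref{disco}, using the embeddings of Lemma~\ref{Phiv} and their evident analogues for $C^s(G)$ and $C^t(G)$ (a continuous left inverse comes from composing with $\phi\in F'$ with $\phi(z)=1$, cf.\ Lemma~\ref{scavect}), it suffices to treat the scalar map $\beta\colon C^r_c(G)\times C^s(G)\to C^t(G)$ with $b$ the multiplication. The decisive point is that a $0$-neighbourhood $Y\sub C^s(G)$ constrains $\eta$ only on one fixed compact set $L'$, whereas a $0$-neighbourhood $X\sub C^r_c(G)$, being a box in the direct-limit topology, does not constrain the \emph{location} of $\Supp(\gamma)$. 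I would fix a bad target neighbourhood $Z$ controlling $\|\cdot\|_\infty$ on some compact $L''$ with a chosen point $x_0\in L''$; then, given arbitrary basic $X,Y$, I take a fixed positive bump $\gamma_0$ supported in a small symmetric neighbourhood $W$ of $1$ and translate it to a point $g$ lying outside the compact set $x_0(L')^{-1}W$, so that the sampled set $(\Supp\gamma)^{-1}x_0\sub W^{-1}g^{-1}x_0$ is disjoint from $L'$; I scale this translate into the box $X$, and take $\eta\ge0$ a bump equal to a large constant $N$ on $(\Supp\gamma)^{-1}x_0$ and vanishing on $L'$, so $\eta\in Y$ while $N$ is free. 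Then $(\gamma*\eta)(x_0)\ge N\int_G\gamma\,d\lambda_G$ is a fixed positive multiple of $N$, so for $N$ large $\gamma*\eta\notin Z$, giving $\beta(X\times Y)\not\sub Z$. The real work is the bookkeeping of these quantifiers, arranging that the freedom to push $\Supp(\gamma)$ to infinity (non-compactness) defeats the single compact set on which $Y$ controls $\eta$.
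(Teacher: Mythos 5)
Your proposal is correct, and its skeleton agrees with the paper's (reduce $\theta_b,\theta_K$ to $\beta_{b^\vee}$ and $\beta_{K^{-1}}$ via Lemmas~\ref{invo} and~\ref{invoaut}; make joint continuity of $\beta_K$ the engine; get the compact case for free; defeat continuity for non-compact $G$ by exploiting that a $0$-neighbourhood of the second factor constrains functions only on one compact set), but you carry out the two middle steps by genuinely different means. For continuity of $\beta_K$, the paper localizes in the \emph{target}: it composes with the restriction maps $\rho_W\colon C^t(G,F)\to C^t(W,F)$, replaces $\eta$ by $h\cdot\eta$ for a cutoff $h$ equal to $1$ on $K^{-1}\wb{W}$, and thereby reduces to the already-proved compactly supported convolution (Lemmas~\ref{convcts}, \ref{diffbar}) plus continuity of multiplication operators; you instead prove localized versions of the estimates of Lemma~\ref{lasimpe}, which is legitimate because (\ref{hinein}) and (\ref{prepa1}) hold for $\gamma\in C^r_K(G,E_1)$ and $\eta\in C^s(G,E_2)$ and because Lemma~\ref{topframe} describes the topology of $C^t(G,F)$ by localized frame seminorms; your case split at $t=\infty$ (throw all derivatives onto the factor with infinite order) is exactly right. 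For hypocontinuity, the paper establishes separate continuity, reduces to Fr\'{e}chet/Banach-valued data via Lemma~\ref{basichypo}\,(c),(e), and then invokes barrelledness of $C^r_c(G,E_1)$ together with \cite[III.5.2]{SaW}; you obtain (H1) and (H2) directly from joint continuity of the $\beta_K$ — (H1) by Lemma~\ref{towardsh}\,(c) and absorption, (H2) by the principle that a family of linear maps on a locally convex direct limit is equicontinuous once all its restrictions to the steps are. That principle does hold (a convex set containing $0$ whose intersection with each step is a $0$-neighbourhood is a $0$-neighbourhood of the limit), so your route is more elementary: it bypasses the completeness reductions and the barrelledness citation entirely, at the modest price of justifying this direct-limit fact. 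Your discontinuity construction is a valid variant of the paper's: the paper supports $\eta$ outside the constraining compact set, takes $\gamma$ to be its reflection through inversion, evaluates at $1$, and scales by $1/a$ and $a^2$; you translate a fixed bump to a point $g\notin x_0(L')^{-1}W$ and give $\eta$ a free amplitude $N$ on the sampled set $(\Supp\gamma)^{-1}x_0$. Two small repairs: $\eta$ should vanish on a \emph{neighbourhood} of $L'$ (not merely on $L'$), so that all derivative seminorms of $N\eta$ over $L'$ vanish and $N\eta\in Y$ for every $N$ — easily arranged since $(\Supp\gamma)^{-1}x_0$ and $L'$ are disjoint compact sets; and in the case $r=s=t=0$ of a general locally compact group, the paracompactness needed for Lemma~\ref{towardsh}\,(c) comes from the partition of $G$ into cosets of an open $\sigma$-compact subgroup rather than from connected components.
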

\begin{proof}
Since $\theta_b(\gamma,\eta)=\beta_{b^\vee}(\eta^*,\gamma^*)^*$
for all $(\gamma,\eta)\in C^r(G,E_1)\times C^s_c(G,E_2)$
and $\beta_b(\gamma,\eta)=\theta_{b^\vee}(\eta^*,\gamma^*)^*$
for all $(\gamma,\eta)\in C^r_c(G,E_1)\times C^s(G,E_2)$,
where the involutions denoted by $*$ are continuous linear maps
(Lemma~\ref{invoaut}), the assertions concerning~$\theta_b$ follow
if we can establish those concerning~$\beta:=\beta_b$.

We first show that $\beta_K$
is continuous for each compact subset $K\sub G$.
To this end, recall that the topology on $C^t(G,F)$ is initial with respect to
the linear maps $\rho_W\colon C^t(G,F)\to C^t(W,F)$, $\gamma\mto\gamma|_W$,
for $W$ ranging through the relatively compact, open subsets of~$G$
(cf.\ \cite[Lemma~4.6]{ZOO}).
%
%
Since $K^{-1}\wb{W}$ is compact, there exists $h\in C^\infty(G)$
with compact support $L:=\Supp(h)$,
such that $h|_{K^{-1}\wb{W}}=1$.
For $\gamma\in C^r_K(G,E_1)$ and $\eta\in C^s(G,E_2)$, we have
$(\gamma*_b\eta)(x)=\int_Gb(\gamma(x),\eta(y^{-1}x))\,d\lambda_G(y)
=\int_K b(\gamma(x),\eta(y^{-1}x))\,d\lambda_G(y)
=\int_K b(\gamma(x),(h\cdot \eta)(y^{-1}x))\,d\lambda_G(y)
=\int_G b(\gamma(x),(h\cdot \eta)(y^{-1}x))\,d\lambda_G(y)=$\linebreak
$(\gamma*_b(h\cdot \eta))(x)$
for all $x\in W$ and hence
\[
\rho_W(\beta_K(\gamma,\eta))=
\rho_W(\beta_K(\gamma,h\cdot \eta))
=\rho_W(\mu(\gamma,h\cdot \eta))\,,
\]
using the convolution
$\mu\colon C^r_K(G,E_1)\times C^s_L(G,E_2)\to C^t_{KL}(G,F)\sub C^t(G,F)$,
which is continuous by Lemmas~\ref{convcts} and \ref{diffbar}.
Since also the multiplication operator
$m_h\colon C^s(G,E_2)\to C^s_L(G,E_2)$, $\eta\mto h\cdot \eta$ is continuous
(cf.\ \cite[Lemma~3.9 and Proposition~3.10]{GCX}),
$\rho_W\circ \beta_K$ and hence $\beta_K$
is continuous.

If $\gamma\in C^r_c(G,E_1)$,
then $\beta_b(\gamma,.)=\beta_{\Supp(\gamma)}(\gamma,.)\colon C^s(G,E_2)\to C^t(G,F)$
is continuous.
For $\eta\in C^r(G,E_2)$,
the map $\beta_b(.,\eta)\colon C^r_c(G,E_1)\to C^t(G,F)$ is linear
and $\beta_b(.,\eta)|_{C^r_K(G,E_1)}=\beta_K(.,\eta)\colon
C^r_K(G,E_1)\to C^t(G,F)$ is continuous for each compact set $K\sub G$.
Hence, since $C^r_c(G,E_1)=\dl\,C^r_K(G,E_1)$\vspace{-.3mm}
as a locally convex space,
$\beta_b(.,\eta)$ is continuous.
Thus $\beta_b$ is separately continuous.

If $B\sub C^r_c(G,E_1)$ is a bounded set, then
$B\sub C^r_K(G,E_1)$
for some compact set $K\sub G$ (Lemma~\ref{towardsh}\,(c)),
and thus $\beta|_{B\times C^s(G,E_2)}=\beta_K|_{B\times C^s(G,E_2)}$
is continuous.
Hence $\beta_b$ is hypocontinuous in the first argument (Remark~\ref{easyhypo}).

To see that $\beta_b$ is hypocontinuous in the second argument,
let $(\lambda_i)_{i\in I}$ be a family of linear maps $\lambda_i\colon F\to F_i$
to Banach spaces~$F_i$, such that the topology on $F$ is initial
with respect to this family.
Then the topology on $C^t(G,F)$ is initial with respect to the mappings
$C^t(G,\lambda_i)$ for $i\in I$ (see \cite[Lemma~4.14]{ZOO} for manifolds;
cf.\ \cite[3.4.6]{Eng} for topological spaces).
Hence, by Lemma~\ref{basichypo}\,(c),
we need only
show that each of the maps $C^t(G,\lambda_i)\circ \beta_b=\beta_{\lambda_i\circ b}$\linebreak
is hypocontinuous in the second argument. We may therefore assume now that~$F$
is a Banach space. Then there exist continuous linear mappings\linebreak
$\psi_1\colon E_1\to F_1$
and $\psi_2\colon E_2\to F_2$ to suitable
Banach spaces $F_1$ and $F_2$, and a continuous bilinear map
$c\colon F_1\times F_2\to F$ such that $c\circ (\psi_1\times \psi_2)=b$.
Since $\beta_b=\beta_c\circ (C^r_c(G,\psi_1)\times C^s(G,\psi_2))$,
we need only show that $\beta_c$ is hypocontinuous (see Lemma~\ref{basichypo}\,(b)).
We may therefore assume that all of $E_1$, $E_2$, and $F$ are Banach spaces.
Then $C^r_K(G,E_1)$ is a Fr\'{e}chet space for each\linebreak
compact subset $K\sub G$,
and hence barrelled.
Hence also $C^r_c(G,E_1)$ is\linebreak
barrelled,
like every locally convex direct limit of barrelled spaces \cite[II.7.2]{SaW}.
As the first factor of its domain is barrelled,
the separately continuous bilinear map
$\beta_b\colon C^r_c(G,E_1)\times C^s(G,E_2)\to C^t(G,F)$
is hypocontinuous in the second argument
\cite[III.5.2]{SaW}. As we already established its hypocontinuity in
the first argument, $\beta_b$ is hypocontinuous.

Finally, we show that $\beta_b$ (and hence also $\theta_b$) fails to be continuous if~$G$ is not compact.
Pick $u\in E_1$, $v\in E_2$ such that $w:=\beta(u,v)\not=0$.
Let $K\sub G$ be a compact identity neighbourhood
and~$p$ be a continuous seminorm on~$F$
such that $p(w)>0$.
Then $W:=\{\gamma\in C^t(G,F)\colon \gamma(K)\sub B^p_1(0)\}$
is an open $0$-neighbourhood in $C^t(G,F)$.
To see that $\beta_b$ is not continuous,
let $U\sub C^r_c(G,E_1)$
and $V\sub C^s(G,E_2)$ be any $0$-neighbourhoods.
Let $(U_i)_{i\in I}$
be a locally finite cover of~$G$ be relatively compact, open sets.
Since the topology on $C^s(G,E_2)$
is initial with respect to the restriction mappings $\rho_i:$\linebreak
$C^s(G,E_2)\to C^s(U_i,E_2)$, $\gamma\mto \gamma|_{U_i}$
(cf.\ \cite[Lemma~4.12]{ZOO}),
we find a finite subset $I_0\sub I$ and $0$-neighbourhoods
$Q_i\sub C^s(U_i,E_2)$ for $i\in I_0$ such that
\begin{equation}\label{usefq}
\bigcap_{i\in I_0}\rho_i^{-1}(Q_i)\sub V\,.
\end{equation}
Since $K:=\bigcup_{i\in I_0}\wb{U_i}$ is compact
and we assume that $G$ is not compact,
the open set $G\setminus K$ is non-empty.
We pick $\eta\in C^\infty_c(G)$ such that $\eta\geq 0$,
$\eta\not=0$ and $\Supp(\eta)\sub G\setminus K$.
Then $ \eta_a:=a \gamma v\in V$ for each $a>0$ (by (\ref{usefq})).
Define $\gamma_a \in C^r_c(G,E_1)$ via $\gamma_a(x):=\frac{1}{a}\eta(x^{-1}) u$.
Since~$U$ is a $0$-neighbourhood,
there is $a_0>0$ such that $\gamma_a\in U$ for all $a\geq a_0$.
Then
\[
p((\gamma_a*_b \eta_{a^2})(1))=a p(w)\int_G\eta(y^{-1})\eta(y^{-1})\,d\lambda_G(y)
=a p(w)\|\eta\|_{L^2}^2\,,
\]
where the right hand side can be made $>1$ for large~$a$.
Thus $\gamma_a *_b\eta_{a^2}\not\in W$
although $\gamma_a\in U$ and $\eta_{a^2}\in V$.
Hence $\beta_b(U\times V)\not\sub W$.
Since~$U$ and~$V$ were arbitrary, $\beta_b$ is not continuous.\vspace{-6mm}
\end{proof}
\appendix
\section{Background on vector-valued integrals}\label{appwint}\vspace{-2.5mm}
If $E$ is a locally convex space, $(X,\Sigma,\mu)$
a measure space~\cite{Bau}
and $\gamma\colon X\to E$
a function, we call a (necessarily unique)
element $v\in E$ the \emph{weak integral of $\gamma$ with respect to $\mu$}
(and write $\int_X\gamma(x)\,d\mu(x):=v$)
if $\lambda\circ \gamma\colon X\to\R$
is $\mu$-integrable for each $\lambda\in E'$
and $\lambda(v)=\int_X\lambda(\gamma(x))\,d\mu(x)$.
If $p$ is a continuous seminorm on~$E$,
using the Hahn-Banach Extension Theorem
one finds that
\begin{equation}\label{intabsch}
p\left( \int_X\gamma(x)\,d\mu(x)\right) \leq \mu(X)\|\gamma\|_{p,\infty}\,.
\end{equation}
\begin{la}\label{intex}
Let $(E,\|.\|)$ be a locally convex space,
$X$ be a locally compact space,
$\mu$ be a Borel measure on~$X$ $($see \emph{\ref{measet})}
and $\gamma\colon X\to E$ be a continuous mapping
with compact support~$K$.
If~$K$ is metrizable, assume that~$E$ is sequentially complete
or satisfies the metric convex compactness property;
if~$K$ is not metrizable, assume that~$E$
satisfies the convex compactness property.
Then the weak integral $\int_X\gamma(x)\,d\mu(x)$
exists in~$E$.
\end{la}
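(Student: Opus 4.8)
The plan is to realize $v$ as a barycenter lying in a compact convex set, produced by a finite-intersection argument after testing against finitely many functionals at a time. The trivial ingredients come first: for every $\lambda\in E'$ the map $\lambda\circ\gamma$ is continuous, supported in $K$ and bounded (as $\lambda(\gamma(K))$ is compact in $\R$), hence $\mu$-integrable because $\mu(K)<\infty$; so all the integrals $\int_X\lambda(\gamma)\,d\mu$ in the definition make sense, and uniqueness of $v$ is automatic since $E'$ separates points by Hahn--Banach. If $\mu(K)=0$ then $\gamma=0$ $\mu$-almost everywhere and $v:=0$ works, so I may assume $c:=\mu(K)>0$.

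Next I would exhibit a compact convex set containing the prospective value. Since $\gamma$ is continuous with compact support, $\gamma(X)=\gamma(K)\cup\{0\}$ is compact. When $K$ is metrizable I would first note that $\gamma(K)$ is metrizable too: $\gamma|_K$ factors through the quotient of $K$ by the closed equivalence relation $\{(x,x')\colon \gamma(x)=\gamma(x')\}$, and such a quotient of a compact metric space is again compact metrizable. Granting this, the standing hypotheses on $E$ apply directly: in the metrizable case the sequential completeness or the metric convex compactness property, and in the non-metrizable case the convex compactness property (see \cite{Voi}), each yield that $C:=\wb{\conv}(\gamma(K)\cup\{0\})$ is compact. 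Thus $cC$ is a compact convex subset of $E$, and the claim is that $v\in cC$.

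To produce $v$, for each finite $\Lambda=\{\lambda_1,\dots,\lambda_n\}\sub E'$ I set
\[
H_\Lambda:=\Big\{w\in cC\colon \lambda_k(w)=\int_X\lambda_k(\gamma)\,d\mu\ \text{for } k=1,\dots,n\Big\}.
\]
Each $H_\Lambda$ is closed in the compact set $cC$, and $H_{\Lambda_1}\cap\cdots\cap H_{\Lambda_m}=H_{\Lambda_1\cup\cdots\cup\Lambda_m}$, so once each $H_\Lambda$ is shown non-empty the family has the finite intersection property and $\bigcap_\Lambda H_\Lambda\neq\emptyset$; any point of the intersection is the required $v$. To see $H_\Lambda\neq\emptyset$, put $T:=(\lambda_1,\dots,\lambda_n)\colon E\to\R^n$ and $f:=T\circ\gamma$. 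Then $D:=T(C)$ is compact convex in $\R^n$ with $f(X)=T(\gamma(K))\cup\{0\}\sub D$, while $b:=\big(\int_X\lambda_k(\gamma)\,d\mu\big)_{k=1}^n=\int_K f\,d\mu$. Since $f$ is $D$-valued and $\mu|_K$ has total mass $c$, a separating-hyperplane argument in $\R^n$ gives $\tfrac1c\,b\in D$, i.e.\ $b\in cD=T(cC)$; any $w\in cC$ with $Tw=b$ lies in $H_\Lambda$.

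The step I expect to demand the most care is the compactness of $C$. The three completeness hypotheses are tailored precisely so that the closed convex hull of $\gamma(K)\cup\{0\}$ is compact, and the one genuinely topological point is to verify that $\gamma(K)$ is metrizable whenever $K$ is, so that the \emph{metric} convex compactness property (or sequential completeness) is applicable. Everything downstream is the standard barycenter construction.
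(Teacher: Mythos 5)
Your argument is correct, but note that the paper's own proof consists of nothing except two citations: \cite[1.2.3]{Her} for the first case ($K$ metrizable, $E$ sequentially complete) and \cite[3.27]{RFA} for the two convex-compactness cases. What you have written is essentially a self-contained expansion of those citations: your finite-intersection/barycenter construction is precisely the proof of Rudin's Theorem~3.27, whose hypothesis is exactly compactness of $\wb{\conv}(\gamma(X))$, so for the two convex-compactness cases you have in effect inlined the cited result; and your observation that $\gamma(K)\cup\{0\}$ is compact metrizable whenever $K$ is metrizable (a continuous Hausdorff image of a compact metrizable space is compact metrizable) is exactly the point that makes the \emph{metric} convex compactness property applicable, so that case is a definition-chase once the metrizability is in hand. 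The one place where you rely on a genuine theorem rather than a definition is the sequentially complete case: the implication ``$E$ sequentially complete $\Rightarrow$ the closed convex hull of a compact \emph{metrizable} subset is compact'' is true---it says that sequentially complete spaces have the metric convex compactness property, and can be proved by running the barycenter map on the compact metrizable space of Borel probability measures on $\gamma(K)\cup\{0\}$, where metrizability lets one work with sequences of finitely supported measures and sequential completeness supplies the limits---but your phrase ``apply directly'' glosses over it; the paper instead disposes of that case by the citation to Herv\'{e}, where the integral is obtained as the limit of a Cauchy sequence of Riemann-type sums. If you supply a proof or a reference for that single implication, your argument is a complete, self-contained replacement for both citations, which is a modest gain in transparency over the paper's proof.
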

\begin{proof}
See \cite[1.2.3]{Her} for the first case,
\cite[3.27]{RFA} for the two others.
\end{proof}
The next two lemmas can be proved exactly as \cite[Proposition~3.5]{Bil}.
\begin{la}\label{ctspar}
Let $E$ be a locally convex space, $X$ be a topological space, $\mu$ a Borel measure on
a compact space~$K$, and $f\colon X \times K\to E$ be a continuous map.
Assume that the weak integral $g(x):=\int_Kf(x,a)\,d\mu(a)$ exists for each $x\in X$.
Then $g\colon X\to E$ is continuous.
\end{la}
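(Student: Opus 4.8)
The plan is to verify continuity of $g$ at an arbitrary point $x_0\in X$ by estimating $p(g(x)-g(x_0))$ for each continuous seminorm $p$ on $E$, since these seminorms determine the topology of the locally convex space $E$. First I would observe that, for each fixed $x\in X$, the map $a\mapsto f(x,a)-f(x_0,a)$ is continuous on the compact set $K$ and has $g(x)-g(x_0)$ as its weak integral: applying any $\lambda\in E'$ and using linearity of $\lambda$ together with additivity of the scalar integral shows that $\lambda(g(x)-g(x_0))=\int_K\lambda\bigl(f(x,a)-f(x_0,a)\bigr)\,d\mu(a)$, which is precisely the defining property of the weak integral. Existence of the integrals of $f(x,\cdot)$ and $f(x_0,\cdot)$ holds by hypothesis (or Lemma~\ref{intex}).

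Next I would apply the basic seminorm estimate (\ref{intabsch}) to this difference, obtaining
\[
p(g(x)-g(x_0))\;\leq\;\mu(K)\,\sup_{a\in K}p\bigl(f(x,a)-f(x_0,a)\bigr),
\]
where $\mu(K)<\infty$ because $\mu$ is a Borel measure and $K$ is compact (see \ref{measet}). Thus the whole problem reduces to showing that the supremum on the right tends to $0$ as $x\to x_0$; this is the crux, and amounts to a smallness statement uniform in $a$ over the compact set $K$.

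To establish this, fix $\ve>0$ and consider the auxiliary function $h\colon X\times K\to[0,\infty[$, $h(x,a):=p\bigl(f(x,a)-f(x_0,a)\bigr)$. It is continuous, as a composition of the continuous maps $f$, subtraction and $p$, and satisfies $h(x_0,a)=p(0)=0$ for all $a\in K$. Hence for each $a\in K$ there are open neighbourhoods $U_a\ni x_0$ in $X$ and $V_a\ni a$ in $K$ with $h<\ve$ on $U_a\times V_a$. By compactness of $K$, finitely many $V_{a_1},\dots,V_{a_n}$ cover $K$; set $U:=\bigcap_{i=1}^n U_{a_i}$, a neighbourhood of $x_0$. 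Then for every $x\in U$ and every $a\in K$ there is an index $i$ with $a\in V_{a_i}$ and $x\in U_{a_i}$, so $h(x,a)<\ve$, whence $\sup_{a\in K}p(f(x,a)-f(x_0,a))\leq\ve$. Combining this with the displayed estimate yields $p(g(x)-g(x_0))\leq\mu(K)\,\ve$ for all $x\in U$, and letting $\ve$ vary proves continuity of $g$ at $x_0$.

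The main obstacle is the covering step: one must upgrade the pointwise-in-$a$ vanishing of $h$ at $x_0$ to a bound uniform over all of $K$, and it is precisely compactness of $K$ (together with finiteness of $\mu(K)$) that makes this possible. Everything else is a routine reduction to scalar integrals via continuous linear functionals and an application of the standard seminorm bound (\ref{intabsch}).
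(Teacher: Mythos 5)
Your proof is correct and complete. The paper gives no written-out proof of this lemma---it only remarks that it ``can be proved exactly as \cite[Proposition~3.5]{Bil}''---and your argument (pass to the weak integral of the difference $f(x,\cdot)-f(x_0,\cdot)$, apply the seminorm bound (\ref{intabsch}), and upgrade the pointwise vanishing at $x_0$ to a bound uniform in $a\in K$ by a tube-lemma compactness argument) is exactly the standard argument that citation stands for, so you have in effect supplied the same proof in self-contained form.
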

\begin{la}\label{diffpar}
In the situation of~{\rm\ref{ctspar}}, assume
that $n\in \N$, $r\in \N_0\cup\{\infty\}$,
$X \sub \R^n$ is open,
the partial derivatives $\partial^\alpha_1f(x,a)$
of $f$ with respect to the variables in~$X$
exist for all $\alpha\in\N_0^n$ with $|\alpha|\leq r$,
and define continuous maps $\partial^\alpha_1f\colon X\times K\to E$.
Also, assume that the weak integrals
$\int_K \partial^\alpha_1f(x,a)\,d\mu(a)$ exist in~$E$ for all~$\alpha$ as before.
Then
$g\colon X\to E$, $x\mto \int_Kf(x,a)\,d\mu(a)$
is~$C^r$,
and $\partial^\alpha g(x)\!=\! \int_K \partial^\alpha_1f(x,a)\,d\mu(a)$
for all $\alpha\! \in\!  \N_0^n$ with $|\alpha|\leq r$ and $x\in K$.\vspace{-1.7mm}
\end{la}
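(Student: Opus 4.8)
The plan is to verify the asserted formula for first-order partial derivatives and then iterate. The characterization of $C^r$-maps on open subsets of $\R^n$ recalled in the excerpt reduces the claim to showing that all partial derivatives $\partial^\alpha g$ with $|\alpha|\le r$ exist, are continuous, and are given by $G_\alpha(x):=\int_K\partial_1^\alpha f(x,a)\,d\mu(a)$. Continuity of each $G_\alpha$ is immediate from Lemma~\ref{ctspar} applied to $\partial_1^\alpha f$ in place of $f$, so the real work is to establish the derivative formula itself, starting with $r\ge 1$ and a single index $j$.

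First I would fix $x\in X$ and $j$, and use linearity of the weak integral to write the difference quotient minus the candidate value as a single weak integral,
\[
\frac{g(x+te_j)-g(x)}{t}-G_{e_j}(x)=\int_K\Big(\frac{f(x+te_j,a)-f(x,a)}{t}-\partial_{1,j}f(x,a)\Big)\,d\mu(a),
\]
which exists because it is the corresponding linear combination of the three weak integrals $g(x+te_j)$, $g(x)$, $G_{e_j}(x)$, all existing by hypothesis. Applying the estimate~(\ref{intabsch}) for a continuous seminorm $p$ on $E$ then bounds the $p$-seminorm of the left-hand side by $\mu(K)\,\sup_{a\in K}p\big(\frac{f(x+te_j,a)-f(x,a)}{t}-\partial_{1,j}f(x,a)\big)$.

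The key step is a uniform-in-$a$ control of this integrand. For fixed $a$ and small $|t|$, the curve $c_a(\tau):=f(x+\tau t e_j,a)$ is $C^1$ with $c_a'(\tau)=t\,\partial_{1,j}f(x+\tau t e_j,a)$; a mean value inequality — obtained by selecting, via Hahn--Banach, a functional $\lambda\in E'$ with $|\lambda|\le p$ attaining $p$ on the relevant vector, and applying the scalar mean value theorem to $\lambda\circ c_a$ — yields
\[
p\Big(\frac{f(x+te_j,a)-f(x,a)}{t}-\partial_{1,j}f(x,a)\Big)\le \sup_{\tau\in[0,1]}p\big(\partial_{1,j}f(x+\tau t e_j,a)-\partial_{1,j}f(x,a)\big).
\]
Since $(s,a)\mapsto p\big(\partial_{1,j}f(x+se_j,a)-\partial_{1,j}f(x,a)\big)$ is continuous on $[-\delta_0,\delta_0]\times K$ (for small $\delta_0$ keeping the segment in $X$) and vanishes on $\{0\}\times K$, the tube lemma — here compactness of $K$ is used, covering the non-metrizable case as well — gives, for each $\ve>0$, a $\delta>0$ with value $<\ve$ for all $|s|<\delta$ and all $a\in K$. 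Hence the right-hand side above tends to $0$ as $t\to 0$ uniformly in $a$, so $p$ of the difference quotient minus $G_{e_j}(x)$ tends to $0$; as $p$ was arbitrary, $\partial_j g(x)=G_{e_j}(x)$, and $g$ is $C^1$ with the asserted first-order formula.

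Finally I would close by induction on $r$, with $r=0$ being Lemma~\ref{ctspar} and $r=\infty$ following from all finite cases. Each $G_{e_j}=\partial_j g$ is again a parameter integral of the same type, with $f$ replaced by $\partial_{1,j}f$, which satisfies the hypotheses with $r-1$ in place of $r$ (its $x$-partials up to order $r-1$ are the continuous maps $\partial_1^{\beta+e_j}f$, whose weak integrals exist by assumption). By the inductive hypothesis $G_{e_j}$ is $C^{r-1}$ with $\partial^\beta G_{e_j}(x)=\int_K\partial_1^{\beta+e_j}f(x,a)\,d\mu(a)$, so every $\partial^\alpha g$ with $|\alpha|\le r$ exists, is continuous, and equals $G_\alpha(x)$; hence $g$ is $C^r$. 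The main obstacle is the uniform control of the integrand, i.e.\ packaging the Hahn--Banach mean value inequality with the tube-lemma uniformity so that the estimate~(\ref{intabsch}) closes the argument for a general target $E$ (only as complete as the assumed integrability guarantees, and with possibly non-metrizable $K$).
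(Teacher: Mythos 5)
Your proof is correct, and it is essentially the argument the paper itself invokes: the paper gives no explicit proof but defers to Biller's Proposition~3.5, whose method is exactly this classical scheme --- express the difference quotient minus the candidate derivative as a single weak integral, bound it via the seminorm estimate~(\ref{intabsch}), control the integrand uniformly in $a\in K$ by a Hahn--Banach mean value inequality plus compactness of~$K$, and then induct on the order of differentiation. The only step you use tacitly is that $\partial_1^\beta\partial_{1,j}f=\partial_1^{\beta+e_j}f$ in the induction (commutation of mixed partials), which follows for vector-valued maps by applying functionals $\lambda\in E'$ and the classical Schwarz theorem, so there is no gap.
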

\section{Hypocontinuous bilinear maps}\label{hypoapp}\vspace{-2mm}
{\bf Hypocontinuity.}
As a special case of more general concepts,
we call a bilinear map $\beta\colon E_1\times E_2\to F$
between locally convex spaces
\emph{hypocontinuous} if the following conditions are satisfied:
\begin{itemize}
\item[(H1)]
For each $0$-neighbourhood $V\sub F$
and bounded set $B_1\sub E_1$,
there exists a $0$-neighbourhood $U\sub E_2$
such that $\beta(B_1\times U)\sub V$; and:
\item[(H2)]
For each $0$-neighbourhood $V\sub F$
and bounded set $B_2\sub E_2$,
there exists a $0$-neighbourhood $U\sub E_1$
such that $\beta(U\times B_2)\sub V$.
\end{itemize}
In this case, $\beta$ is separately continuous (as $B_1$, $B_2$ can be chosen as singletons).
If~$\beta$ is separately continuous and satisfies (H1) (resp., (H2)),
we say that~$\beta$ is hypocontinuous in its first (resp., its second) argument.
\begin{rem}\label{easyhypo}
It is well known that a separately continuous
bilinear map $\beta\colon E_1\times E_2\to F$ between locally convex spaces
is hypocontinuous in its second argument
if and only if its restrictions
$\beta|_{E_1\times B}\colon E_1\times B\to F$ are continuous
for all bounded subsets $B\sub E_2$
(see, e.g.,  \cite[Proposition~16.8]{Hyp}; cf.\ Proposition~4 in
\cite[Chapter III, \S5, no.\,3]{Bou}).
\end{rem}
Simple observations
concerning hypocontinuous bilinear maps will be useful.
\begin{la}\label{basichypo}
\begin{itemize}
\item[\rm(a)]
Let $\beta\colon E_1\times E_2\to F$ be a bilinear map between locally convex spaces
which is hypocontinuous in its second argument.
Let $\Lambda\colon F\to H$ be a continuous linear map to a locally convex space~$H$.
Then also $\Lambda \circ\beta\colon E_1\times E_2\to H$ is hypocontinuous in its second argument.
\item[\rm(b)]
Let $\beta\colon E_1\times E_2\to F$ be a bilinear map between locally convex spaces
which is hypocontinuous in its second argument.
Let $H_1$, $H_2$ be locally convex spaces and
$\psi_1\colon H_1\to E_1$, $\psi_2\colon H_2\to E_2$ be continuous linear maps.
Then also $\beta\circ(\psi_1\times \psi_2) \colon H_1\times H_2\to F$ is hypocontinuous in its second
argument.
\item[\rm(c)]
Let $E_1$, $E_2$ and~$F$ be locally convex spaces.
If the topology on~$F$ is initial with respect to a family $(\Lambda_i)_{i\in I}$ of linear maps
$\Lambda_i \colon F\to F_i$ to locally convex spaces~$F_i$,
then a bilinear map $\beta\colon E_1\times E_2\to F$ is hypocontinuous in its second argument
if and only if $\Lambda_i\circ \beta$
is hypocontinuous in its second argument, for each $i\in I$.
\item[\rm(d)]
Let $(E_i)_{i\in I}$ and $(F_j)_{j\in J}$ be families of locally convex spaces
and let\linebreak
$\beta\colon \big(\bigoplus_{i\in I}E_i\big)\times \big(\bigoplus_{j\in J}F_j\big)\to H$
be a bilinear map
to a locally convex space~$H$.
Then~$\beta$ is hypocontinuous
in its second argument if and only if $\beta_{i,j}:=\beta|_{E_i\times F_j}\colon E_i\times F_j\to H$
is hypocontinuous in its second argument, for all $(i,j)\in I\times J$.
\item[\rm(e)]
If $E_1$, $E_2$ are locally convex spaces and $\beta\colon E_1\times E_2\to F$ is a continuous
bilinear map to a Fr\'{e}chet space~$F$, then there exist continuous linear maps
$\psi_1\colon E_1\to F_1$ and  
$\psi_2\colon E_2\to F_2$ to Fr\'{e}chet spaces $F_1,F_2$
and a continuous bilinear map $\theta\colon F_1\times F_2\to F$
such that $\beta=\theta\circ (\psi_1\times \psi_2)$.
\end{itemize}
\end{la}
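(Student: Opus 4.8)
The plan is to settle parts~(a)--(d) by direct neighbourhood chasing and to reserve the genuine work for the factorization~(e). Throughout I use that hypocontinuity in the second argument means separate continuity together with~(H2), and that a continuous linear map both pulls back $0$-neighbourhoods to $0$-neighbourhoods and carries bounded sets to bounded sets. For~(a), separate continuity of $\Lambda\circ\beta$ is immediate, and given a $0$-neighbourhood $W\sub H$ and a bounded set $B_2\sub E_2$ I would apply~(H2) for $\beta$ to the $0$-neighbourhood $\Lambda^{-1}(W)$ to obtain $U\sub E_1$ with $\beta(U\times B_2)\sub\Lambda^{-1}(W)$, whence $(\Lambda\circ\beta)(U\times B_2)\sub W$. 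Part~(b) is the mirror image: for bounded $B_2\sub H_2$ the image $\psi_2(B_2)$ is bounded, so~(H2) for $\beta$ yields $U\sub E_1$ with $\beta(U\times\psi_2(B_2))\sub V$, and $\psi_1^{-1}(U)$ then witnesses~(H2) for $\beta\circ(\psi_1\times\psi_2)$.

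In~(c) the forward implication is part~(a) applied to each $\Lambda_i$. For the converse, separate continuity of $\beta$ follows from that of the $\Lambda_i\circ\beta$ because the topology on $F$ is initial with respect to the $\Lambda_i$; and since a basic $0$-neighbourhood of $F$ has the form $V=\bigcap_{i\in I_0}\Lambda_i^{-1}(V_i)$ with $I_0$ finite, intersecting the $0$-neighbourhoods supplied by~(H2) for the finitely many $\Lambda_i\circ\beta$ verifies~(H2) for $\beta$. In~(d) the forward implication comes from~(b) applied to the canonical inclusions $E_i\emb\bigoplus_i E_i$ and $F_j\emb\bigoplus_j F_j$. For the converse I would first invoke the standard fact that a bounded set $B\sub\bigoplus_j F_j$ lies in a finite subsum $\bigoplus_{j\in J_0}F_j$, with bounded projections $B_j:=\pr_j(B)$. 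Given a convex $0$-neighbourhood $V\sub H$, choose for each $i$ and each $j\in J_0$ a convex $0$-neighbourhood $U_{i,j}\sub E_i$ with $\beta_{i,j}(U_{i,j}\times B_j)\sub\frac{1}{|J_0|}V$, put $U_i:=\bigcap_{j\in J_0}U_{i,j}$, and let $U:=\conv(\bigcup_i U_i)$, a $0$-neighbourhood by Remark~\ref{semisum}. Writing $x\in U$ as $\sum_i\mu_i w_i$ with $w_i\in U_i$, $\mu_i\geq 0$, $\sum_i\mu_i=1$, and expanding $\beta(x,y)$ by bilinearity, each inner sum over $i$ is a convex combination of vectors in $\frac{1}{|J_0|}V$ and the $|J_0|$ resulting terms sum into $V$ by convexity; separate continuity holds because a linear map on a direct sum is continuous once its restrictions to the summands are. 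The convex-hull bookkeeping for the direct-sum neighbourhoods is the only delicate point among~(a)--(d).

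Part~(e) carries the weight. Fix an increasing sequence $(q_n)_{n\in\N}$ of seminorms defining the Fr\'{e}chet topology of $F$. Continuity of $\beta$ at the origin yields, for each $n$, continuous seminorms $p_n$ on $E_1$ and $p_n'$ on $E_2$ with $q_n(\beta(x,y))\leq p_n(x)\,p_n'(y)$ for all $x,y$, and I may take both sequences increasing. Let $F_1$ be the Fr\'{e}chet space arising as the Hausdorff completion of $E_1$ in the metrizable topology generated by $(p_n)_n$, with canonical continuous linear map $\psi_1\colon E_1\to F_1$, and define $F_2,\psi_2$ analogously from $(p_n')_n$. The product estimate shows $\beta(x,y)$ depends only on $\psi_1(x)$ and $\psi_2(y)$: if $\psi_1(x)=\psi_1(x')$ then $p_n(x-x')=0$ for all $n$, so $q_n(\beta(x-x',y))=0$ for all $n$ and hence $\beta(x-x',y)=0$, as $F$ is Hausdorff. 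Thus $\theta(\psi_1(x),\psi_2(y)):=\beta(x,y)$ is a well-defined bilinear map on the dense subspaces $\psi_1(E_1)\sub F_1$ and $\psi_2(E_2)\sub F_2$, satisfying the same product estimates in the defining seminorms of $F_1$ and $F_2$. The main obstacle is the last step: using density of $\psi_1(E_1)$ and $\psi_2(E_2)$, metrizability of $F_1,F_2$, and completeness of $F$, I would extend $\theta$ uniquely to a continuous bilinear map on $F_1\times F_2$ (the product estimates persist), so that $\beta=\theta\circ(\psi_1\times\psi_2)$ by construction.
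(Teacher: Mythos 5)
Your proof is correct and follows essentially the same route as the paper's: the finite-subsum/convex-hull argument in (d) and the construction in (e) via the Hausdorff completion of $E_1$, $E_2$ with respect to the seminorms coming from the product estimates, followed by extension of the induced bilinear map using density, metrizability and completeness of $F$, are exactly the paper's arguments (the paper phrases the completion as that of $E_1/N_1$ and cites Bourbaki for the extension step). The only cosmetic difference is that in (a)--(c) you verify (H2) directly by neighbourhood chasing, whereas the paper invokes the restriction-to-bounded-sets criterion of Remark~\ref{easyhypo}.
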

\begin{proof}
(a) $\Lambda\circ \beta$ is separately continuous,
and $\Lambda\circ\beta|_{E_1\times B}$ is continuous
for each bounded subset $B\sub E_2$. Hence $\Lambda\circ\beta$
is hypocontinuous in its second argument (see Remark~\ref{easyhypo}).

(b) The composition $\gamma:=\beta\circ(\psi_1\times \psi_2)$ is separately continuous.
If $B\sub H_2$ is bounded, then $\psi_2(B)$ is bounded in~$E_2$,
entailing that
$\gamma|_{H_1\times B}=\beta|_{E_1\times \psi_2(B)}\circ \psi_1\times (\psi_2|_B)$ is continuous.
Hence $\gamma$ is hypocontinuous in its second argument (using
Remark~\ref{easyhypo} again).

(c) If $x\in E_1$, then $\beta(x,.)\colon E_2\to F$ is continuous if and only if\linebreak
$\Lambda_i\circ \beta(x,.)\colon E_2\to F_i$
is continuous for each $i\in I$.
Likewise, $\beta(.,y)$ is continuous for $y\in E_2$
if and only if $\Lambda_i\circ \beta(.,y)$ is continuous for each~$i$.
If $B\sub E_2$ is bounded, then $\beta|_{E_1\times B}$
is continuous if and only if $\Lambda_i\circ\beta|_{E_1\times B}$
is continuous for each $i\in I$.
The assertion follows with Remark~\ref{easyhypo}.

(d) Write $E:=\bigoplus_{i\in I}E_i$,
$F:=\bigoplus_{j\in J}F_j$.
For $i\in I$, let $\lambda_i\colon E_i\to E$ be the usual
embedding. Also, let $\mu_j\colon F_j\to F$ be the embedding for $j\in J$.
If $\beta$ is hypocontinuous in its second argument,
then so is $\beta_{i,j}=\beta\circ (\lambda_i\times \mu_j)$, by~(b).

Conversely, assume that each $\beta_{i,j}$ is hypocontinuous in its second argument.
If $x=(x_i)_{i\in I}\in E$, then $x\in \bigoplus_{i\in I_0}E_i$
for some finite set $I_0\sub I$. The linear map
$\beta(x,.)\colon F\to H$ is continuous on~$F_j$ for each $j\in J$
(as it coincides with $\sum_{i\in I_0}\beta_{i,j}(x_i,.)$ there).
Hence $\beta(x,.)$ is continuous (by the universal property of the locally convex direct sum).
Likewise, $\beta(.,y)\colon E\to H$ is continuous for each $y\in F$,
and thus $\beta$ is separately continuous.

Let $B\sub \bigoplus_{j\in J}F_j$ be a bounded set and $U\sub H$ be an absolutely convex $0$-neighbourhood.
Then $B\sub\bigoplus_{j\in J_0}F_i=:X$ for some finite subset $J_0\sub J$ \cite[II.6.3]{Jar}.
Let $n$ be the number of elements of~$J_0$.
Excluding only trivial cases, we may assume that $n\geq 1$.
For $j\in J_0$, let $\pi_j\colon X\to F_j$ be the projection onto~$F_j$.
Then $B_j:=\pi_j(B)$ is bounded in~$F_j$.
For each $i\in I$, using the hypocontinuity of $\beta_{i,j}$
we now find a convex $0$-neighbourhood $V_{i,j}\sub E_i$
such that $\beta_{i,j}(V_{i,j}\times B_j)\sub \frac{1}{n}U$.
Set $V_i:=\bigcap_{j\in J_0}V_{i,j}$. Then
$\beta(V_i\times B_j)=\beta_{i,j}(V_i\times B_j)\sub \frac{1}{n}U$
and hence, using that $B\sub \sum_{j\in J_0}B_j$,
\[
\beta(V_i\times B)\sub \sum_{j\in J_0}\beta_{i,j}(V_i\times B_j)\sub \sum_{j\in J_0}\frac{1}{n}U\sub U\,.
\]
Now $V:=\conv(\bigcup_{i\in I}V_i)$ is a $0$-neighbourhood in~$E$.
As every $V_i$ is convex,
for each $x\in V$ there are a finite set $I_0\sub I$, elements $x_i\in V_i$ for $i\in I_0$,
and $t_i\geq 0$ for $i\in I_0$ with $\sum_{i\in I_0}t_i=1$ and $x=\sum_{i\in I_0}t_i x_i$.
Hence, for each $y\in B$,
\[
\beta(x,y)=\sum_{i\in I_0}t_i\beta(x_i,y)
\in \sum_{i\in I_0}t_iU\sub U\,.
\]
Thus $\beta(V\times B)\sub U$.
Hence $\beta$ is hypocontinuous in its second argument.

(e) Let $(s_n)_{n\in \N}$ be a sequence of continuous
seminorms on~$F$ defining its locally convex vector topology.
For each $n\in \N$, we then find continuous seminorms~$P_n$ on~$E_1$
and~$Q_n$ on~$E_2$ such that $s_n(\beta(x,y))\leq P_n(x)Q_n(y)$
for all $x\in E_1$, $y\in E_2$.
Then $N_1:=\{x\in E_1\colon (\forall n\in \N)\; P_n(x)=0\}$
is a vector subspace of $E_1$,
and  $N_2:=\{y\in E_2\colon (\forall n\in \N)\; Q_n(y)=0\}$
a vector subspace of~$E_2$.
We equip $E_1/N_1$
with the vector topology defined by the sequence
$(p_n)_{n\in \N}$ of seminorms given by
$p_n(x+N_1):=P_n(x)$,
and let~$F_1$ be the completion of $E_1/N_1$.
Likewise, $F_2$ denotes the completion of $E_1/N_2$,
equipped with the seminorms~$q_n$ obtained from the~$Q_n$.
If $x\in N_1$ and $y\in E_2$,
then $s_n(\beta(x,y))=0$ for each $n\in \N$ and thus
$\beta(x,y)=0$. Likewise,
$\beta(x,y)=0$ for all $x\in E_1$ and $y\in N_2$.
As a consequence,
$B \colon (E_1/N_1)\times (E_2/N_2)\to F$,
$B (x+N_1,y+N_2):=\beta(x,y)$ is a well-defined
bilinear map,
which is continuous as
$s_n(B(x+N_1,y+N_2))=s_n(\beta(x,y))\leq P_n(x)Q_n(y)=p_n(x+N_1)q_n(y+N_2)$.
Since~$F$ is complete, $B$ extends to a continuous bilinear map
$\theta\colon F_1\times F_2\to F$
(cf.\ Theorem~1 in \cite[III, \S6, no.\,5]{BTG}).
Let $\psi_1\colon E_1\to F_1$
be the composition of inclusion map $E_1/N_1\to F_1$
and the canonical map $E_1\to E_1/N_1$.
Define $\psi_2\colon E_2\to F_2$ analogously.
Then $\beta=\theta\circ (\psi_1\times \psi_2)$ indeed.\vspace{-7mm}
\end{proof}
\section{Proofs for Sections~\ref{secprel} to \ref{secmeasset}}\label{appproofs}\vspace{-2mm}
{\bf Proof of Lemma~\ref{sumemb}.}
Since $\lambda:=\oplus_{i\in I}\lambda_i$ is linear and continuous on each $E_i$,
it is continuous (by the universal property of the locally convex direct
sum). Moreover, $\lambda$ is injective since each $\lambda_i$ is injective.
To see that~$\lambda$ is an embedding,
let $U\sub \bigoplus_{i\in I}E_i=:E$ be a $0$-neighbourhood.
By Remark~\ref{semisum},
there is a continuous seminorm~$p$ on~$E$,
of the form $p(x)=\sum_{i\in I}p_i(x_i)$ with continuous seminorms~$p_i$ on~$E_i$,
such that $B^p_1(0)\sub U$.
Since $\lambda_i$ is an embedding,
there exists a continuous seminorm~$q_i$ on~$F_i$
such that $\lambda_i^{-1}(B^{q_i}_1(0))\sub B^{p_i}_1(0)$.
Then $q(y):=\sum_{i\in I}q_i(y_i)$ defines a continuous
seminorm on $\bigoplus_{i\in I}F_i$. We now show that
$\lambda(E)\cap B^q_1(0)\sub
\lambda(B^p_1(0))\sub \lambda(U)$
(whence $\lambda(U)$ is a $0$-neighbourhood in $\lambda(E)$ and hence~$\lambda$ open onto
its image -- as required).
In fact,
$\lambda_i(E_i)\cap B^{q_i}_1(0)\sub \lambda_i(B^{p_i}_1(0))$.
Hence $\lambda(E)\cap B^q_1(0)=\{y\in \bigoplus_{i\in I}\lambda_i(E_i)\colon
\sum_{i\in I}q_i(y_i)<1\}=\conv\bigcup_{i\in I}(\lambda_i(E_i)\cap B^{q_i}_1(0))\sub
\conv\bigcup_{i\in I}\lambda_i(B^{p_i}_1(0))\sub \lambda(B^p_1(0))$.\,\vspace{2mm}\Punkt

\noindent
{\bf Proof of Lemma~\ref{lcsum}.}
If $K\sub M$ is compact, then $F:=\{j\in J\colon K\cap K_j\not=\emptyset\}$
is a finite set, and
$\Phi(C^r_K(M,E))\sub \bigoplus_{j\in F} C^r_{K_j}(M,E)$,
identifying the right hand side with a topological vector subspace
of
$\bigoplus_{j\in J}C^r_{K_j}(M,E)$ in the usual way.
Since $\bigoplus_{j\in F} C^r_{K_j}(M,E)\isom \prod_{j\in F} C^r_{K_j}(M,E)$
as a topological vector space,
the restriction $\Phi_K$ of $\Phi$ to $C^r_K(M,E)$ will
be continuous if we can show that all of its components
with values in $C^r_{K_j}(M,E)$ are continuous, for $j\in F$.
But these are the multiplication operators $C^r_K(M,E)\to C^r_{K_j}(M,E)$, $\gamma\mto h_j\cdot \gamma$,
whose continuity is well-known (cf.\ \cite[Proposition~3.10]{GCX}).

If the $h_j$ form a partition of unity, let $S\colon \bigoplus_{j\in J}C^r_{K_j}(M,E)\to C^r_c(M,E)$,
$(\gamma_j)_{j\in J}\mto \sum_{j\in J}\gamma_j$ be the summation map,
which is linear, and continuous because it is continuous
on each summand. Then $S\circ \Phi=\id_{C^r_c(M,E)}$.
Hence~$\Phi$ has a continuous left inverse and hence~$\Phi$
is a topological embedding.\,\vspace{2mm}\Punkt

\noindent
{\bf Proof of Lemma~\ref{SJ}.}
Because $\Supp(\gamma|_S)\sub S\cap\Supp(\gamma)$
is compact for each $\gamma\in C^r_c(M,E)$,
the map~$\Phi$ makes sense,
and it is clear that~$\Phi$ is linear.

$\Phi$ is continuous:
If $K\sub M$ is compact, there is a finite set $F\sub P$
such that $K\sub \bigcup_{S\in F}S$. Then $\Phi(C^r_K(M,E))\sub \bigoplus_{S\in F} C^r_c(S,E)$,
whence the restriction $\Phi_K$ of $\Phi$ to $C^r_K(M,E)$ will
be continuous if we can show that all of its components
with values in $C^r_c(S,E)$ are continuous, for $S\in F$.
But these are the restriction maps $C^r_K(M,E)\to C^r_c(S,E)$, $\gamma\mto\gamma|_S$,
and hence are continuous because they can be written as a composition of the
continuous restriction map $C^r_K(M,E)\to C^r_{K\cap S}(S,E)$ (compare \cite[Lemma~3.7]{GCX})
and the continuous inclusion map $C^r_{K\cap S}(S,E)\to C^r_c(S,E)$.

If $P$ is a partition of~$M$ into open sets,
let $\Psi\colon \!\bigoplus_{S\in P}C^r_c(S,E)\!\to\! C^r_c(M,E)$
be the map taking a element $\eta:=(\gamma_S)_{S\in P}$ of the left hand side
to the function $\gamma\in C^r_c(M,E)$
defined piecewise via $\gamma(x):=\gamma_S(x)$
for $x\in S$. Then $\Phi(\Psi(\eta))=\eta$, and thus $\Phi$ is surjective.
Moreover, $\Psi(\Phi(\gamma))=\gamma$ for $\gamma\in C^r_c(M,E)$,
whence $\Phi$ is injective.
Hence $\Phi$ is bijective, with $\Psi=\Phi^{-1}$.
By the universal property of the locally convex direct sum,
the linear map $\Psi$ will be continuous
if its restriction $\Psi_S$ to the summand $C^r_c(S,E)$
is continuous, for each $S\in P$.
To check this property,
it suffices to show that the restriction $\Psi_K$ of $\Psi_S$ to
$C^r_K(S,E)$ is continuous for each compact set $K\sub S$.
But $\Psi_K$ is continuous, as it is the composition of the map $C^r_K(S,E)\to C^r_K(M,E)$
extending functions by~$0$ off $S$ (which is known to be continuous)\footnote{See
\cite[Lemma~4.24]{ZOO} if $r>0$; the case $r=0$ is elementary.}
and the continuous inclusion map $C^r_K(M,E)\to C^r_c(M,E)$.\,\vspace{2mm}\Punkt

\noindent
{\bf Proof of Lemma~\ref{framediff}.}
As the $C^{k+\ell}$-property can be tested on the open cover of chart domains,
we may assume that $M\sub \R^m$ is open.
The proof is by induction on~$k$.
If $k=0$, then $\gamma$ is $C^\ell$ by hypothesis (and $\ell=k+\ell$).
Now assume $k>0$. Then $\gamma$ is~$C^1$.
For each $X\in \cF_1$,
the map $X.\gamma$ is $C^{k-1}$
and $X_j\ldots X_1.X.\gamma$ is $C^\ell$
for all $j\in \N_0$ such that $j\leq k-1$
and $X_i\in \cF_{i+1}$ for $i\in \{1,\ldots, j\}$.
Hence $X.\gamma$ is $C^{k+\ell-1}$,
by induction. Let $\cF_1=\{X_1,\ldots, X_m\}$
and write $E_j=\frac{\partial}{\partial x_j}$
for $j\in \{1,\ldots, m\}$. Then $E_j=\sum_{i=1}^m a_{i,j}X_i$
with smooth functions $a_{i,j}\in C^\infty(M)$
and thus $\frac{\partial \gamma}{\partial x_j}=\sum_{i=1}^m a_{i,j}\, (X_i.\gamma)$
is $C^{k+\ell-1}$.
Since $\gamma$ is $C^1$ and its first order
partial derivatives are $C^{k+\ell-1}$,
the map $\gamma$ is $C^{k+\ell}$.\,\vspace{2mm}\Punkt

\noindent
{\bf Proof of Lemma~\ref{topframe}.}
Step~1. Let $\cU$ be the set of all open subsets
of~$M$ which are diffeomorphic to open subsets
of $\R^m$ (where $m$ is the dimension of~$M$).
For each $s\in \N_0\cup\{\infty\}$,
the topology on $C^s(M,E)$ is initial
with respect to the restriction
maps $\rho^s_U\colon C^s(M,E)\to C^s(U,E)$, $\gamma\mto\gamma|_U$
(see \cite[Lemma~4.12]{ZOO}).
Taking $s=r$, we deduce:\footnote{If the topology on a space~$X$ is initial
with respect to maps $f_i\colon X\to X_i$, with $i\in I$,
and the topology on~$X_i$ is initial with respect to maps $g_{j,i}\colon X_i\to X_{j,i}$
to topological spaces $X_{j,i}$, for $j\in J_i$,
then the topology on~$X$ is initial with respect to
the maps $g_{j,i}\circ f_i$.}
If the lemma holds for each space~$C^s(U,E)$,
then $\cO$ on $C^r(M,E)$ is initial with respect to the maps
\begin{equation}\label{leftrght}
D_{X_j|_U,\ldots, X_1|_U}\circ \rho^r_U=\rho^0_U\circ D_{X_j,\ldots,X_1}\,.
\end{equation}
Taking $s=0$, we deduce that $\cT_\cF$ is initial
with respect to the maps on the right hand side of~(\ref{leftrght}).
Hence $\cO=\cT_\cF$.

Step~2. In view of Step~1, it only remains to prove the lemma
assuming that there exists a $C^r$-diffeomorphism $\phi\colon M\to V$
onto an open set $V\sub \R^m$.
If $X$ is a smooth vector field on~$M$,
let us write $X':=T\phi\circ X\circ \phi^{-1}$ for the corresponding vector field
on~$V$.
Define $\Phi_s\colon C^s(M,E)\to C^s(V,E)$, $\gamma\mapsto \gamma\circ \phi^{-1}$
for $s\in \N_0\cup\{\infty\}$.
If $s\in \N\cup\{\infty\}$, then
$\Phi_{s-1}(X.\gamma)=X'.\Phi_s(\gamma)$ for each vector field~$X$ on~$M$ and $\gamma\in C^s(M,E)$,
whence
\[
\Phi_0\circ D_{X_j,\ldots, X_1}=D_{X_j',\ldots, X_1'}\circ \Phi_r
\]
for all $j\in \{0,\ldots, r\}$ and $X_i\in \cF_i$ for $i\in \{1,\ldots, j\}$.
Hence, since $\Phi_0$ and $\Phi_r$ are isomorphisms of topological vector spaces
(cf.\ \cite[Lemma~4.11]{ZOO}),
the topology on $C^r(M,E)$ is initial with respect to the $D_{X_j,\ldots, X_1}$
if and only if the topology on $C^r(V,E)$ is initial with respect
to the $D_{X_j',\ldots, X_1'}$.

Step~3. By Step~2, we may assume
that $M=V$ is an open subset of~$\R^m$.
We claim: If also $\cG=(\cG_1,\ldots,\cG_r)$ is an $r$-tuple of frames on~$V$,
then $\cT_\cG\sub \cT_\cF$.
Hence also $\cT_\cF\sub \cT_\cG$
(reversing the roles of $\cF$ and $\cG$)
and thus  $\cT_\cF = \cT_\cG$.
But it is known that $\cO=\cT_\cG$
if we choose $\cG_i:=\{\frac{\partial}{\partial x_1},\ldots,\frac{\partial}{\partial x_m}\}$
for all $i\in \{1,\ldots,r\}$
(cf.\ \cite[Proposition~4.4]{GCX}).
Thus $\cT_\cF=\cT_\cG=\cO$, as required.\\[2.5mm]
To establish the claim, recall that the multiplication operators
\[
m_f\colon C^0(V,E)\to C^0(V,E)\, , \quad m_f(\gamma):=f \cdot \gamma
\]
are continuous for each $f\in C^0(V)$
(cf.\ \cite[Lemma~3.9]{GCX}).
Write $\cF_i=\{Y_{i,1},\ldots, Y_{i,m}\}$.
Then each $X\in \cG_i$ is a linear combination
$X=\sum_{k=1}^m a_k Y_{i,k}$ 
with coefficients $a_k\in C^\infty(V)$.
Hence, for all $j\in \{0,\ldots, r\}$ and $X_1\in \cG_1$, $\ldots$, $X_j\in \cG_j$,
it follows from the product rule that $D_{X_j,\ldots,X_1}$
can be\linebreak
written as a sum of operators of the form $m_{f_{k_i,\ldots, k_1}} \circ D_{Y_{i,k_i}\ldots, Y_{1,k_1}}$,
where $i\in \{0,\ldots, j\}$, $k_1,\ldots, k_i\in \{1,\ldots, m\}$
and $f_{k_i,\ldots, k_1}\in C^\infty(V)$.
Since $\cT_\cF$ makes the map $D_{Y_{i,k_i}\ldots, Y_{1,k_1}}\colon C^r(V,E)\to C^0(V,E)$
continuous and also the multiplication operator $m_{f_{k_i,\ldots, k_1}}\colon C^0(V,E)\to C^0(V,E)$ is continuous,
it follows that $\cT_\cF$ makes $D_{X_j,\ldots,X_1}\colon C^r(V,E)\to C^0(V,E)$
continuous. Hence $\cT_\cG\sub \cT_\cF$.\,\vspace{2mm}\Punkt

\noindent
{\bf Proof of Lemma~\ref{enoughsn}.}
By definition, the topology on $C^\infty_K(G,E)$
is initial with respect to the inclusion maps $C^\infty_K(G,E)\to C^n_K(G,E)$
with $n\in \N_0$.
It hence suffices to prove the lemma for $t\in \N_0$.
For (a), let $\cF_i:=\cF_L$
for $i\in \{1,\ldots, t\}$
(with notation from Definition~\ref{deflrmrtr}).
For the proof of (b), let $\cF_i=\cF_R$
for $i\in \{1,\ldots, t\}$.
In either case, let
$\cF:=(\cF_1,\ldots,\cF_t)$.
Because the topology on $C^0_K(G,E)$
is defined by the seminorms $\|.\|_{p,\infty}$,
it follows with Lemma~\ref{topframe}
that the topology on $C^t_K(G,E)$ is defined
by the seminorms $\gamma\mto\|X_j\ldots X_1.\gamma\|_{p,\infty}$
with $j\in \{0,\ldots, t\}$ and $X_i\in \cF_i$ for $i\in \{1,\ldots, j\}$.
The pointwise maximum of these (for fixed $p$) is $\|.\|^L_{t,p}$ in (a),
$\|.\|^R_{t,p}$ in (b), from which the descriptions in (a)
and (b) follow.
We now prove the first half of the final assertion (the second half
can be shown analogously).
If $i\in \{0,\ldots, \ell\}$ and $j\in \{0,\ldots, k\}$
are given, let
$\cF=(\cF_1,\ldots, \cF_t)$ be the $t$-tupel
whose first $i$~entries are~$\cF_R$,
followed by $j$~entries $\cF_L$,
followed by $t-i-j$ arbitrary frames.
Then Lemma~\ref{topframe}
implies that $\gamma\mto \|X_{i+j}\ldots X_1.\gamma\|_{\infty,p}$
is continuous on $C^t_K(G,E)$,
for all $X_1\in \cF_1$, $\ldots$, $X_{i+j}\in \cF_{i+j}$.
The maximum of all these seminorms
for $i\leq \ell$ and $j\leq k$ is $\|.\|^{L,R}_{k,\ell,p}$,
which therefore is continuous.
Hence, the topology defined by these seminorms
is coarser than the given topology.
On the other hand, taking $\cF_i:=\cF_R$
for $i\in\{1,\ldots, \ell\}$ and $\cF_i=\cF_L$
for $i\in \{\ell+1,\ldots, t\}$,
Lemma~\ref{topframe}
shows that the topology on $C^t_K(G,E)$
is defined by the seminorms $\gamma\mto \|X_j\ldots X_1.\gamma\|_{p,\infty}$,
for $j\in \{0,\ldots, t\}$, continuous seminorms~$p$ on~$E$ and $X_i\in \cF_i$.
For fixed~$p$, each of the latter seminorms is bounded by
$\|.\|^{L,R}_{k,\ell,p}$.
Hence the topology defined by the $\|.\|^{L,R}_{k,\ell,p}$
is also finer than the given topology, and thus coincides with it.\,\vspace{2mm}\Punkt

\noindent
{\bf Proof of Lemma~\ref{transla}.}
We discuss $\tau^L_g$ ($\tau^R_g$ can be treated
analogously).
Since left translation $L_g\colon G\to G$, $L_g(x):=gx$
is a $C^r$-diffeomorphism, the map
$\Xi_g\colon C^r(G,E)\to C^r(G,E)$, $\gamma\mto \tau^L_g(\gamma)=\gamma\circ L_g$
is continuous and linear \cite[Lemma~3.7]{GCX}.
Hence also its restriction
$\Xi_{g,K}\colon C^r_K(G,E)\to C^r_{g^{-1}K}(G,E)$
is continuous,
and so is the map $\Xi_{g,c}\colon C^r_c(G,E)\to C^r_c(G,E)$, $\gamma\mto\tau^L_g(\gamma)$
(as it is linear and its restriction $\Xi_{g,K}$ to $C^r_K(G,E)$
is continuous for each $K$).
It is clear that each of the preceding maps is bijective; the
inverse map is given by $\Xi_{g^{-1}}$, $\Xi_{g^{-1}, g^{-1}K}$
and $\Xi_{g^{-1},c}$, respectively, and hence continuous.\,\vspace{2mm}\Punkt

\noindent
{\bf Proof of Lemma~\ref{normtrans}.}
If $X$ is a left invariant vector field on~$G$ and $\gamma\in C^1(G)$,
then $(X.(\tau^L_g\gamma))(a)= d(\gamma\circ L_g)(X(a))=d\gamma T(L_g)(X(a))=d\gamma X(ga)
=(X.\gamma)(ga)$ for $a\in G$ and thus $X.(\tau^L_g(\gamma))=\tau^L_g(X.\gamma)$.
Hence $\|X_j\ldots X_1.(\tau^L_g(\gamma))|_{g^{-1}K}\|_\infty
=\|\tau^L_g(X_j\ldots X_1.\gamma)|_{g^{-1}K}\|_\infty=
\|X_j\ldots X_1.\gamma|_K\|_\infty$ for all $j\in \{0,\ldots,\ell\}$
and $X_1,\ldots, X_j\in \cF_L$ (using the notation from\linebreak
Definition~\ref{deflrmrtr}).
Now take the maximum over all $j$ and $X_1,\ldots, X_j$.\,\vspace{2mm}\Punkt

\noindent
{\bf Proof of Lemma~\ref{invoaut}.}
The map $\eta_G\colon G\to G$, $x\mto x^{-1}$
is $C^r$. Hence
$(\eta_G)^*\colon
C^r(G,E)\to C^r(G,E)$,
$\gamma\mto \gamma\circ\eta_G$ is continuous linear \cite[Lemma~3.7]{GCX}.
As $f\colon G\to \, ]0,\infty[$, $f(x):=\Delta_G(x^{-1})$ is $C^r$,
we can consider the
multiplication operator $m_f\colon C^r(G,E)\to C^r(G,E)$, $m_f(\gamma)(x):=f(x)\gamma(x)$,
which is continuous linear (cf.\ \cite[Proposition~4.16]{ZOO} if $r>0$,
and \cite[Lemma~3.9]{GCX} otherwise).
Thus $\Theta=m_f\circ (\eta_G)^*$ is continuous linear.
Because $\Phi\circ\Phi=\id$, we deduce that $\Phi$ is an isomorphism of
topological vector spaces.
As a restriction of~$\Theta$, also the bijection $\Theta_K$
(with inverse $\Theta_{K^{-1}}$) is an isomorphism of topological vector spaces.
Finally, the linear map $\Phi_c$ is continuous
(as its restricions $\Phi_K$ to the spaces $C^r_K(G,E)$
are continuous) and hence an isomorphism
of topological vector spaces (as $\Phi_c\circ \Phi_c=\id$).\,\vspace{2mm}\Punkt

\noindent
{\bf Proof of Lemma~\ref{Phiv}.}
Since $\Phi_v$ is linear, it will be continuous
on $C^r_c(M)=\dl\,C^r_K(M)$\vspace{-.3mm}
if its restriction $\Phi_K\colon C^r_K(M)\to C^r_K(M,E)\sub C^r_c(M,E)$, $\gamma\mto \gamma v$
to $C^r_K(M)$ is continuous for each compact subset $K\sub M$.
Let $\mu\colon \R \times E\to E$ be the scalar multiplication,
and $h\colon M\to \R$ be a smooth map such that $L:=\Supp(h)$ is compact
and  $h|_K=1$. Because $\mu$ is smooth and $\mu(0,0)=0$,
also the bilinear map
$C^r_L(M,\mu)\colon C^r_L(M)\times C^r_L(M,E)\isom
C^r_L(M,\R\times E)\to C^r_L(E)$,
\[
(\gamma,\eta)\mto \mu\circ (\gamma,\eta)=\gamma\eta
\]
is smooth and hence continuous \cite[Proposition~3.10]{GCX}.
Hence also $\Phi_K(\gamma)=\gamma v = h\gamma v=\mu(\gamma,hv)$ is continuous
in $\gamma$.
To complete the proof, pick $\lambda\in E'$ such that $\lambda(v)=1$.
Then $C^r_c(M,\lambda)\colon C^r_c(M,E)\to C^r_c(M)$, $\gamma\mto\lambda \circ \gamma$
is a continuous linear map (by
\cite[Lemma~3.3]{GCX} and the locally convex direct limit property),
and $C^r_c(M,\lambda)\circ \Phi_v=\id_{C^r_c(M)}$
because $\lambda\circ (\gamma v)=\gamma$.
Since $\Phi_v$ has a continuous left inverse, it is a topological embedding.\,\vspace{2mm}\Punkt

\noindent
{\bf Proof of Lemma~\ref{scavect}.}
We first observe that the map $\Theta\colon E\to C^r(M,E)$ taking $v\in E$ to the constant map
$\Theta(v)\colon M\to E$, $x\mto v$ is continuous.
In fact, the linear map $\Theta\colon E\to C^0(M,E)$ is continuous,
as
\[
(\forall v\in E) \quad \|\Theta(v)\|_{p,K}:=\|\Theta(v)|_K\|_{p,\infty} \leq p(v)
\]
for each continuous seminorm $p$ on $E$ and compact subset $K\sub M$.
Since $d^k(\Theta(v))=0$ for all $k\in \N$ with $k\leq r$,
we see that $\Theta$ is also continuous as a map to $C^r(M,E)$.
We now use that $C^r(M,E)$ is a topological $C^r(M)$-module under pointwise multiplication:
Scalar multiplication $\mu\colon \R\times E\to E$ being continuous bilinear and hence
smooth, also $C^r(M,\mu)\colon C^r(M)\times C^r(M,E)\isom C^r(M,\R\times E)\to C^r(M,E)$,
$(\gamma,\eta)\mto \mu\circ (\gamma,\eta)=:\gamma\eta$ is smooth (see
\cite[Proposition~4.16]{ZOO} if $r>0$, \cite[Lemma~3.9]{GCX} if $r=0$) and hence continuous.
Thus $\Psi_E(\gamma,v)=\gamma\Theta(v)=C^r(M,\mu)(\gamma,\Theta(v))$ is continuous
in $(\gamma,v)$.\,\vspace{2mm}\Punkt

\noindent
{\bf Proof of Lemma~\ref{towardsh}.}
(a) It is clear from the definition of the topology that
any $0$-neighbourhood $U\!\sub\! C^r_K(M,E)$
contains an intersection $U_1\cap\ldots \cap U_n$\linebreak
of $0$-neighbourhoods of the form $U_i:=\{\gamma\in C^r_K(M,E)\colon \|d^{\ell_i}\gamma\|_{p_i,K_i}<\ve_i\}$
with $n\in \N$, $\ve_i>0$, $\ell_i\in \N_0$ such that $\ell_i\leq r$,
a continuous seminorm~$p_i$ on~$E$ and a compact set $K_i\sub T^{\ell_i}M$.
Let $(\wt{E}_p,\|.\|_p)$ be the Banach space associated to the continuous seminorm
$p:=p_1+\cdots+p_n$ on~$E$, and $\lambda_p\colon E\to\wt{E}_p$ be the canonical map.
Then $V_i:=\{\gamma\in C^r_K(M,\wt{E}_p)\colon \|d^{\ell_i}\gamma\|_{\|.\|_p,K_i}<\ve_i\}$
is an open $0$-neighbourhood in $C^r_K(M,\wt{E}_p)$
and hence also $V:=V_1\cap\ldots\cap V_n$.
Since $C^r_K(M,\lambda_p)^{-1}(V)\sub U$,
the assertion follows.\footnote{$C^r_K(M,\lambda_p)\colon C^r_K(M,E)\to C^r_K(M,\wt{E}_p)$, $\gamma\mto\lambda_p\circ\gamma$ is also continuous \cite[Lemma~3.3]{GCX}.}

(b) Since~$M$ is $\sigma$-compact,
there exists a locally finite $C^r$-partition of unity $(h_j)_{j\in \N}$ on~$M$
such that each $h_j$ has compact support
$K_j:=\Supp(h_j)$ (take a partition of
unity subordinate to a relatively compact
open cover using
Theorem~3.3 and Corollary~3.4 in \cite[Chapter~II]{Lan}
if $r>0$, \cite[Theorem~5.1.9]{Eng} if $r=0$).
Then~$\Phi$ from Lemma~\ref{lcsum}
is a topological embedding. Thus, for each $0$--neighbourhood $U\sub C^r_c(M,E)$,
there exist $0$-neighbourhoods $U_j\sub C^r_{K_j}(M,E)$
such that $\Phi^{-1}(\bigoplus_{j\in \N} U_j)\sub U$.
As a consequence of~(a), each~$U_j$ contains a set
of the form $C^r_{K_j}(M,\mu_j)^{-1}(V_j)$
for some Banach space~$E_j$,
continuous linear map $\mu_j\colon E\to E_j$,
and $0$-neighbourhood~$V_j$ in $C^r_{K_j}(M,E_j)$.
Then $F:=\prod_{j\in \N}E_j$
is a Fr\'{e}chet space, and $\lambda:=(\mu_j)_{j\in \N}\colon E\to F$
is continuous linear.
Let $\pi_j\colon F\to E_j$ be the projection
onto the $j$-th component,
and $W_j:=C^r_{K_j}(M,\pi_j)^{-1}(V_j)$.
Because $\Psi\colon C^r_c(M,F)\to\bigoplus_{j\in \N}C^r_{K_j}(M,F)$,
$\gamma\mto(h_j \gamma)_{j\in \N}$ is continuous linear
(Lemma~\ref{lcsum}),
the set $P:=\Psi^{-1}(\bigoplus_{j\in\N} W_j)$
is a $0$-neighbourhood in $C^r_c(M,F)$,
and hence $Q:=C^r_c(M,\lambda)^{-1}(P)$
is a $0$-neighbourhood in $C^r_c(M,E)$
(using \cite[Lemma~4.11]{GCX}).
If $\gamma\in Q$,
then $h_j(\lambda\circ \gamma)\in W_j$
for each $j\in \N$,
and hence
$\pi_j\circ (h_j(\lambda\circ \gamma))\in V_j$.
Since
$\pi_j\circ (h_j(\lambda\circ \gamma))
=h_j(\mu_j\circ\gamma)=\mu_j\circ (h_j\gamma)$,
we deduce that $h_j\gamma\in U_j$
and thus $\gamma\in U$. Thus $Q\sub U$,
and the assertion follows.

(c) Let $B\sub C^r_c(M,E)$ be
bounded.
As $M$ is locally compact and paracompact,
it admits a partition~$P$ into $\sigma$-compact
open sets \cite[Theorem~5.1.27]{Eng}.
Let $\Phi\colon C^r_c(M,E)\to \bigoplus_{S\in P}C^r_c(S,E)$
be as in Lemma~\ref{SJ}.
Then $\Phi(B)$ is bounded and hence
$\Phi(B)\in\bigoplus_{S\in P_0}C^r_c(S,E)$
for a finite set $P_0\sub P$.
After replacing the $S\in P_0$
by their union, we may assume that $B\sub C^r_c(S,E)$
(as a consequence of Lemma~\ref{SJ}, $C^r_c(S,E)$ can be regarded
as a topological vector subspace of $C^r_c(M,E)$).
Hence, we may assume that~$M$
is $\sigma$-compact.
Let $K_1,K_2,\ldots$ be compact sets
such that $M=\bigcup_{n=1}^\infty K_n$ and each $K_n\sub K_{n+1}^0$.
Then $C^r_c(M,E)$ is the locally convex direct limit
of $C^r_{K_1}(M,E)\sub C^r_{K_2}(M,E)\sub\cdots$,
where $C^r_{K_n}(M,E)=\{\gamma\in C^r_{K_{n+1}}(M,E)\colon (\forall x\in M\setminus K_n)\;
\gamma(x)=0\}$
is a closed vector subspace of $C^r_{K_{n+1}}(M,E)$
and $C^r_{K_{n+1}}(M,E)$ induces the given topology of $C^r_{K_n}(M,E)$,
for $n\in \N$.
Hence~$B$ is a bounded set in
$C^r_{K_n}(M,E)$ for an $n\in \N$ \cite[II.6.5]{SaW}.\,\vspace{2mm}\Punkt

\noindent
{\bf Proof of Lemma~\ref{convcts}.}
If $(\gamma*_b\eta)(x)\not=0$, then by (\ref{suppgand}) there is $y\in \Supp(\eta)$ such that
$xy^{-1}\in \Supp(\gamma)$. Hence $x\in \Supp(\gamma)y\sub \Supp(\gamma)\Supp(\eta)$.
Now assume that~$K$ is compact.
Because the integrand in (\ref{goodpar}) is continuous as a map taking $(x,y)\in G\times \Supp(\gamma)$ to~$F$,
the continuity of $\gamma*_b\eta$ follows from Lemma~\ref{ctspar}.
If $M\sub G$ is compact and~$q$ a continuous seminorm on~$F$, there are continuous seminorms~$p_1$ on~$E_1$
and~$p_2$ on~$E_2$ such that $q(b(v,w))\leq p_1(v)p_2(w)$ for all $v\in E_1$, $w\in E_2$.
For all $x\in M$, we infer
$q((\gamma*_b\eta)(x)) \leq \int_Kp_1(\gamma(y))p_2(\eta(y^{-1}x))\,d\lambda_G(y)
\leq \lambda_G(K)\|\gamma\|_{p_1,\infty}\|\eta|_{K^{-1}M}\|_{p_2,\infty}$. Thus
\begin{equation}\label{prepa1}
\|(\gamma*_b\eta)|_M\|_{q,\infty}\leq \lambda_G(K)\|\gamma\|_{p_1,\infty}\|\eta|_{K^{-1}M}\|_{p_2,\infty},
\end{equation}
and hence~$\beta$ is continuous.\\[2.3mm]
If~$L$ is compact, we have $(\gamma*_b\eta)(x)=\int_L
b(\gamma(xz^{-1}),\Delta_G(z^{-1})\eta(z))\, d\lambda_G(z)$ by (\ref{altform}),
from which continuity of $\gamma*_b\eta$ follows.
Finally, $\beta$ is continuous as
$\|(\gamma*_b\eta)|_M\|_{q,\infty}\leq \lambda_G(L)
\|\gamma|_{ML^{-1}}\|_{p_1,\infty}\|\eta\|_{p_2,\infty}\|\Delta_G|_{L^{-1}}\|_\infty$.\vspace{2mm}\Punkt

\noindent
{\bf Proof of Proposition~\ref{diffbar}.}
We may assume $r,s\in \N_0$
and proceed by induction,
starting with $r=0$.
If also $s=0$, see Lemma~\ref{convcts}.

Now let $s>0$. If $x_0\in G$, let $V\sub G$ be an open neighbourhood of $x_0$
with compact closure $\wb{V}$. If $K$ is compact, set $M:=K$.
If $K$ is not compact, then $M:=\wb{V}L^{-1}$ is compact.
In either case, $(\gamma*_b\eta)(x)=
\int_Mb(\gamma(y),\eta(y^{-1}x))\,d\lambda_G(y)$
for all $x\in V$.
Hence
$(\gamma*_b\eta)|_V$ is $C^s$,
by Lemma~\ref{diffpar},
and thus $\gamma*_b\eta$ is $C^s$.
The lemma also entails that
\begin{eqnarray*}
\cL_{v_1}.(\gamma*_b\eta)(x) & = &
\int_Mb(\gamma(y), d\eta(T(L_{y^{-1}})(\cL_{v_1}(x)))\,d\lambda_G(y)\\
& = & \int_Gb(\gamma(y), d\eta(T(L_{y^{-1}})T(L_x)(v_1))
\,d\lambda_G(y)\\
& = & \int_Gb(\gamma(y), (\cL_{v_1}.\eta)(y^{-1}x))\,d\lambda_G(y)
\,=\,  (\gamma*_b(\cL_{v_1}.\eta))(x)\, ,
\end{eqnarray*}
using that $T(L_{y^{-1}})T(L_x)(v_1)=T(L_{y^{-1}}\circ L_x)(v_1)=T(L_{y^{-1}x})(v_1)=\cL_{v_1}(y^{-1}x)$.
Since $\cL_{v_1}\eta\in C^{s-1}_L(G,E_2)$,
we obtain, by induction on~$s$,
\begin{equation}\label{caseone}
\cL_{v_i}\cdots\cL_{v_1}.(\gamma*_b\eta)
=
\cL_{v_i}\cdots\cL_{v_2}.(\gamma*_b\cL_{v_1}.\eta)
=\gamma*_b(\cL_{v_i}\cdots\cL_{v_1}.\eta).
\end{equation}
Now assume that $r>0$.
If $s=0$, then (\ref{altform})
enables Lemma~\ref{diffpar} to be applied,\footnote{For $x\in V$ as above,
we can replace the domain of integration by a compact set
again.}
and thus $\gamma*_b\eta\in C^r(G,F)$.
Moreover, repeating the arguments leading to (\ref{caseone})
with right translations, we deduce from (\ref{altform})
that
\begin{equation}\label{casetwo}
\cR_{w_j}\cdots\cR_{w_1}.(\gamma*_b\eta)
=
(\cR_{w_j}\cdots\cR_{w_1}.\gamma)*_b\eta\, .
\end{equation}
If $\gamma$ is $C^r$ and $\eta$ is $C^s$,
then
$\cL_{v_i}\cdots\cL_{v_1}.(\gamma*_b\eta)
=
\gamma*_b (\cL_{v_i}\cdots\cL_{v_1}.\eta)$ is $C^r$ by the case $s=0$,
and (\ref{hinein}) holds.
Thus $\gamma*_b\eta$ is $C^{r+s}$, by Lemma~\ref{framediff}.
In view of Lemmas \ref{topframe} and~\ref{convcts},
the right hand side of~(\ref{hinein})
is continuous as a map $C^r_K(G,E_1)\times C^s_L(G,E_2)\to C^0_{KL}(G,F)$,
for all $v_1,\ldots, v_i$ and $w_1,\ldots,w_j$.
Hence~$\beta$ is continuous as a map to $C^{r+s}_{KL}(G,F)$,
by Lemma~\ref{topframe}.\vspace{2mm}\Punkt

\noindent
{\bf Proof of Lemma~\ref{invo}.}
Substituting $z=xy$ and using the left invariance of Haar measure, we obtain
$(\gamma*_b\eta)^*(x) =
\Delta_G(x^{-1})(\gamma*_b\eta)(x^{-1})=$\linebreak
$\Delta_G(x^{-1})\int_G b(\gamma(y),\eta(y^{-1}x^{-1})\, d\lambda_G(y)=\Delta_G(x^{-1})\int_G b(\gamma(x^{-1}z),\eta(z^{-1}))\, d\lambda_G(z)$\linebreak
$=\int_G b^\vee(\eta^*(z),\gamma^*(z^{-1}x))\, d\lambda_G(z) =
(\eta^* *_{b^\vee}\gamma^*)(x)$.\vspace{2mm}\Punkt

\noindent
{\bf Proof of Lemma~\ref{efftra}.}
(a) With $z=g^{-1}y$ and left invariance of $\lambda_G$, we get
\begin{eqnarray*}
(\tau_g^L(\gamma*_b\eta))(x) & = & (\gamma*_b\eta)(gx)
=\int_Gb(\gamma(y),\eta(y^{-1}gx))\,d\lambda_G(y)\\
&=&\int_Gb(\gamma(gz),\eta(z^{-1}x))\,d\lambda_G(z)
=((\tau^L_g\gamma)*_b\eta)(x).
\end{eqnarray*}
(b) For $x\in G$, get
$\tau^R_g(\gamma *_b\eta)(x)=(\gamma*_b\eta)(xg)=\int_Gb(\gamma(y),\eta(y^{-1}xg))\,d\lambda_G(y)$
$=\int_Gb(\gamma(y),\tau^R_g(\eta)(y^{-1}x))\,d\lambda_G(y)
=(\gamma*_b\tau^R_g(\eta))(x)$.\vspace{2mm}\Punkt

\noindent
{\bf Proof of Lemma~\ref{lasimpe}.}
Let $\cF_R$ and $\cF_L$ be as in Definition~\ref{deflrmrtr}.
If $i\in \{0,\ldots, k\}$, $j\in \{0,\ldots,\ell\}$,
$X_1,\ldots,X_i\in \cF_R$
and $Y_1,\ldots, Y_j\in \cF_L$, then
\begin{eqnarray*}
\lefteqn{\|X_1\ldots X_i Y_1\ldots Y_j.(\gamma*_b\eta)\|_{q,\infty}
\,=\, \|(X_1\ldots X_i.\gamma)*_b(Y_1\ldots Y_j.\eta)\|_{q,\infty}}\\
&\leq& \|X_1\ldots X_i.\gamma\|_{p_1,\infty}
\|Y_1\ldots Y_j.\eta\|_{p_2,\infty}\lambda_G(K)\,\leq\,   \|\gamma\|^R_{k,p_1}\|\eta\|^L_{\ell,p_2}\lambda_G(K)
\end{eqnarray*}
by (\ref{hinein}) and (\ref{prepa1}).
All assertions now follow by passage to maxima
over suitable $i,j$ and the corresponding vector fields.\vspace{2mm}\Punkt

\noindent
{\bf Proof of Proposition~\ref{bihyp}.}
Let us write $\beta_b$ for $\beta$.

Step 1: Assume that $G$ is $\sigma$-compact.
Then the topology on $C^t_c(G,F)$ is initial with respect to
maps of the form $C^t_c(G,\lambda_i)$ for certain continuous linear
maps $\lambda_i\colon F\to F_i$ to Fr\'{e}chet spaces (Lemma~\ref{towardsh}\,(b)).
Hence, by Lemma~\ref{basichypo}\,(c),
$\beta_b$ will be hypocontinuous if we can show that $C^t_c(G,\lambda_i)\circ \beta_b=\beta_{\lambda_i\circ b}$
is hypocontinuous for all $i \in I$. Thus, we may assume that $F$ is a Fr\'{e}chet space.
Then $b=c\circ (\psi_1\times \psi_2)$ with certain continuous linear maps $\psi_1\colon E_1\to F_1$\linebreak
and $\psi_2\colon E_2\to F_2$ to Fr\'{e}chet spaces and a continuous bilinear map\linebreak
$c\colon F_1\times F_2\to F$
(see Lemma~\ref{basichypo}\,(e)). Thus
$\beta_b=\beta_c\circ (C^r_c(G,\psi_1)\times C^s_c(G,\psi_2))$,
and we need only show that $\beta_c$ is hypocontinuous (Lemma~\ref{basichypo}\,(b)).
Hence $E_1$ and $E_2$ are Fr\'{e}chet spaces, without loss of generality.
Then $C^r_c(G,E_1)$ and $C^s_c(G,E_2)$ are locally convex direct limits of Fr\'{e}chet spaces
and hence barrelled \cite[II.7.1 and II.7.2]{SaW},
whence $\beta$ will be hypocontinuous if we can show that it is separately continuous
(by~\cite[III.5.2]{SaW}). For fixed $\eta\in C^s_c(G,E_2)$, let $L:=\Supp(\eta)$.
The map $\beta(.,\eta)\colon C^r_c(G,E_2)\to C^t_c(G,F)$
being linear, it will be continuous on $C^r_c(G,E_1)=\dl\,C^r_K(G,E_1)$\vspace{-.3mm}
if we can show that its restriction to $C^r_K(G,E_1)$
is continuous for each compact set $K\sub G$.
But this is the case, since the convolution map
$C^r_K(G,E_1)\times C^s_L(G,E_2)\to C^t_{KL}(G,F)\sub C^t_c(G,F)$
is continuous, by Lemmas~\ref{convcts} and~\ref{diffbar}.
By an analogous argument, $\beta(\gamma,.)\colon C^s_c(G,E_2)\to C^t_c(G,F)$ is continuous
for each $\gamma\in C^r_c(G,E_1)$.

Step 2.
Let $H\sub G$ be a $\sigma$-compact open subgroup,
$G/H:=\{gH\colon g\in G\}$
be the set of left cosets and $H\backslash G:=\{Hg\colon g\in G\}$
the set of right cosets.
Since $G/H$ is a partition of~$G$ into open sets,
we can identify $C^r_c(G,E_1)$ with $\bigoplus_{M\in G/H}C^r_c(M,E_1)$,
by Lemma~\ref{SJ}. In particular, we can regard $C^r_c(M,E_1)$ as a topological vector subspace
of $C^r_c(G,E_1)$ (extending functions by~$0$).
Likewise, $C^s_c(G,E_2)$ can be identified with $\bigoplus_{N\in H\backslash G}C^s_c(N,E_2)$.
By Lemma~\ref{basichypo}\,(d),
$\beta$ will be hypocontinuous if we can show that its restriction
to $\beta_{M,N}\colon C^r_c(M,E_1)\times C^s_c(N,E_2)\to C^t_c(G,F)$
is hypocontinuous for all $M\in G/H$ and $N\in H\backslash G$.
Write $M=mH$ and $N=Hn$ with suitable $m,n\in G$.
Using Lemma~\ref{efftra}, we can write
\begin{equation}\label{manytr}
\beta_{M,N}=\tau^L_{m^{-1}}\circ \tau^R_{n^{-1}}\circ \beta_{H,H}\circ (\tau^L_m \times \tau^R_n),
\end{equation}
where
$\tau^L_m\colon C^r_c(M,E_1)\to C^r_c(H,E_1)$,
$\tau^R_n\colon C^s_c(N,E_2)\to C^s_c(H,E_2)$,\linebreak
$\tau^R_{n^{-1}}\colon C^t_c(H,F)\to C^t_c(N,F)$ and
$\tau^L_{m^{-1}}\colon C^t_c(N,F)\to C^t_c(mN,F)\sub C^t_c(G,F)$
are the respective translation maps, which are continuous as restrictions of translation
maps on spaces of test functions on~$G$
(as in Lemma~\ref{transla}).
Since $\beta_{H,H}\colon C^r_c(H,E_1)\times C^s_c(H,E_2)\to C^t_c(G,F)$
is hypocontinuous by Step~1, using Lemma~\ref{basichypo} (a) and (b)
we deduce from (\ref{manytr})
that also each of the maps $\beta_{M,N}$ is hypocontinuous. This completes the proof.\vspace{2mm}\Punkt

\noindent
{\bf Proof of Lemma~\ref{summeas}.}
As $\Phi$ is linear, it will be continuous if its restriction $\Phi_K$ to $\Mea_K(X)$
is continuous for each compact set $K\sub X$.
By the hypotheses, $\Phi(\Mea_K(X))$ is contained in the finite
direct sum $\bigoplus_{j\in J_K} \Mea(A_j)$,
whence $\Phi_K$ will
be continuous if we can show that its components
with values in $\Mea(A_j)$ are continuous, for all $j\in J_K$.
But these are continuous, as they have
operator norm $\leq 1$
(noting that $\|\mu|_{\cB(A_j)}\|=|\mu|(A_j)\leq|\mu|(X)=\|\mu\|$).\vspace{2mm}\Punkt

\noindent
{\bf Proof of Lemma~\ref{m-emb}.}
$\Phi$ is continuous:
If $K\sub X$ is compact, there is $n\in \N$
such that $K\sub K_n$. Then $\Phi(\Mea_K(X))\sub \bigoplus_{j\leq n} \Mea_{K_j}(X)$,
whence the restriction $\Phi_K$ of $\Phi$ to $\Mea_K(X)$ will
be continuous if we can show that all of its components
with values in $\Mea_{K_j}(X)$ are continuous, for $j\in \{1,\ldots, n\}$.
But these take $\mu\in \Mea_K(X)$ to $\one_{K_j\setminus K_{j-1}}\odot\mu$,
and hence are continuous, as they have
operator norm $\leq 1$
(since $\|\one_{K_j\setminus K_{j-1}}\odot\mu\|=|\one_{K_j\setminus K_{j-1}}\odot\mu|(X)
= (\one_{K_j\setminus K_{j-1}}\odot |\mu|)(X)
=|\mu|(K_j\setminus K_{j-1})\leq|\mu|(X)=\|\mu\|$.)
Now consider the map $S\colon \bigoplus_{n\in N}\Mea_{K_n}(X)\to \Mea_c(X)$,
$(\mu_n)_{n\in \N}\mto\sum_{n=1}^\infty\mu_n$,
which is continuous as it is continuous on each summand
and linear. Then $S\circ\Phi=\id_{\Mea_c(X)}$.
Thus~$\Phi$ has a continuous left inverse, and hence~$\Phi$ is a topological embedding.\vspace{2mm}\Punkt

\noindent
{\bf Proof of Lemma~\ref{intomas}.}
The linear map $\Phi$ will be continuous if its restriction
$\Phi_K$ to $C_K(X)$ is continuous for each compact set $K\sub X$.
The latter holds, since
$\|\Phi_K(\gamma)\|=\|\gamma\odot \mu\|=(|\gamma|\odot \mu)(X)
=\|\gamma\|_{L^1}\leq \mu(K)\|\gamma\|_\infty$.
Likewise, the restriction $\Psi_K$ of $\Psi$ to $L^1_K(X)$ is continuous because
$\|\Psi_K(\gamma)\|\leq \|\gamma\|_{L^1}$.\vspace{-2mm}\Punkt
{\small
Corresponding author:\\[3mm]
Helge  Gl\"{o}ckner, Universit\"at Paderborn, Institut f\"{u}r Mathematik,\\
Warburger Str.\ 100, 33098 Paderborn, Germany;\\[2mm]
Email: {\tt  glockner\at{}math.upb.de}}
\end{document}